\documentclass[12pt]{amsproc}

\usepackage[cp1251]{inputenc}
\RequirePackage{srcltx}
\usepackage{amsmath}
\usepackage{tikz}
\usepackage{verbatim}

\usepackage{amssymb}
\usepackage{amsxtra}

\usepackage{latexsym}
\usepackage{ifthen}
\usepackage{mathrsfs}

\def\N{{{\Bbb N}}}
\def\Z{{{\Bbb Z}}}
\def\T{{{\Bbb T}}}
\def\R{{\Bbb R}}
\def\C{{\Bbb C}}
\def\l{{\lambda }}
\def\a{{\alpha }}
\def\D{{\Delta }}

\def\a{{\alpha}}
\def\b{{\beta}}
\def\d{{\delta}}
\def\e{{\varepsilon}}
\def\s{{\sigma}}
\def\vp{{\varphi}}
\def\t{{\theta }}
\def\g{{\gamma }}

\def\w{{\omega }}

\def\){\right)}
\def\({\left(}

\def\supp{\operatorname{supp}}

\def\sign{\operatorname{sign}}
\def\Re{\operatorname{Re}}

\numberwithin{equation}{section}
\setlength\textwidth{155mm}
\hoffset=-17mm
\setlength\textheight{220mm}
\voffset=-3mm









\newtheorem{theorem}{Theorem}

\newtheorem{corollary}[theorem]{Corollary}
\newtheorem{lemma}[theorem]{Lemma}

\newtheorem{remark}{Remark}

\newtheorem{property}{Property}

\def\R{\Bbb R}

\def\XXint#1#2#3{{\setbox0=\hbox{$#1{#2#3}{\int}$}
     \vcenter{\hbox{$#2#3$}}\kern-.5\wd0}}


\par

\bigskip
\bigskip

\sloppy

\begin{document}

\title[Properties of moduli of smoothness in $L_p(\R^d)$
]{
Properties of moduli of smoothness in $L_p(\R^d)$}

\author{Yurii
Kolomoitsev$^*$}
\address{
Yu.
Kolomoitsev,
Universit\"at zu L\"ubeck,
Institut f\"ur Mathematik,
Ratzeburger Allee 160,
23562 L\"ubeck, Germany
and Institute of Applied Mathematics and Mechanics of NAS of Ukraine,
General Batyuk Str.~19, 84116 Slov’yans’k, Ukraine
}
\email{kolomoitsev@math.uni-luebeck.de}

\author{Sergey Tikhonov}
\address{S.~Tikhonov, Centre de Recerca Matem\`{a}tica\\
Campus de Bellaterra, Edifici C
08193 Bellaterra (Barcelona), Spain;
ICREA, Pg. Llu\'{i}s Companys 23, 08010 Barcelona, Spain,
 and Universitat Aut\`{o}noma de Barcelona.}
\email{stikhonov@crm.cat}

\date{\today}
\subjclass[2010]{
Primary 41A17, 41A63, 26D10; 26D15 Secondary 41A25, 46E35, 26A33}
\keywords{Moduli of smoothness; $K$-functional; realization of the $K$-functional; Jackson-, Marchaud-, and Ulyanov-inequalities; entire functions of exponential type}

\thanks{$^*$Corresponding author}

\bigskip\bigskip\bigskip\bigskip

\bigskip
\begin{abstract}
 In this paper, we discuss various basic properties of   moduli of smoothness of functions from $L_p(\mathbb{R}^d)$, $0<p\le \infty$.
In particular, complete versions of Jackson-, Marchaud-, and Ulyanov-type inequalities are given for the whole range of $p$. Moreover, equivalences between moduli of smoothness
and the corresponding $K$-functionals and the realization concept are proved.
\end{abstract}

\maketitle

\section{Introduction}


\subsection{Goal of the paper}
The subject of this paper is to collect the main properties of moduli of smoothness in $L_p(\mathbb{R}^d)$, $0<p\le \infty$.
Moduli of smoothness are known to be a very useful concept in many areas of analysis and the PDE's.
Several  basic properties are known for a long time.
For example, a version of the inequality for moduli of smoothness of different orders -- the so-called
Marchaud's inequality -- was obtained already in 1927~\cite{mar}.
On the other hand,  some fundamental characteristics of $\omega_{r}(f,\delta)_{p}$  for the whole range of $p$ and $r$
have remained unknown until now.
In particular, the celebrated Jackson inequality is unknown  for  the case  $0<p<1$ in the multidimensional case for non-periodic functions. Moreover, inequalities between moduli of smoothness in different metrics (the so-called Ulyanov-type inequalities), i.e., $\omega_{r}(f,\delta)_{q}$ vs. $\omega_{k}(f,\delta)_{p}$ for $0<p<q\le \infty$, can be found in the literature only in the weak form~\cite{Cohen, DL, devore, diti}.

In this paper, we not only survey some basic well-known properties  such as  Marchaud's inquality but also  obtain the
complete versions of Jackson  and Ulyanov inequalities.
For this we use needed  Nikolskii--Stechkin and Hardy--Littlewood--Nikolskii type inequalities  for entire functions of exponential type.

 We also derive  equivalences between moduli of smoothness and the following characteristics:
 the corresponding $K$-functionals, the realization concept, and the average moduli of smoothness.
All of them turn out to be  important tools in approximation theory and functional analysis.
Here, again, the results are well known for $1<p<\infty$ and only partially known outside this range.


Throughout the paper,
for $F,G\ge 0$,
 we use the notation
$
\, F \lesssim G
$
 for the
estimate
$\, F \le C\, G,$ where $\, C$ is a positive constant independent of
the essential variables in $\, F$ and $\, G$ (usually, $f$, $\d$, and $\s$). 
 If $\, F \lesssim G \lesssim F$, we write $\, F \asymp G$.

Moreover, for $1\le p\le\infty$,  $p'$ is given by
$
\frac{1}{p}+\frac{1}{p'}=1.
$
For any real number $a$,
$[a]$ is the largest integer not greater than $a$ and $a_+=\max\{a,0\}$.


The rest of the paper is organized as follows. In Subsection~1.2 we give the definitions of the moduli of smoothness of integer and fractional order. In Subsection~1.3 we present the main properties of moduli of smoothness and give some historical comments.
In Section~2 we obtain two important inequalities for entire functions: the Nikol'skii--Stechking-Boas-type inequality and the Hardy-Littlewood-Nikol'skii inequality. Section~3 contains the proofs of the main results of the paper.


\subsection{Definition of moduli of smoothness}

As usual, the $\, r$-th order modulus of smoothness of a function $f\in L_p(\mathbb{R}^d)$ 
is defined  by
\begin{equation}\label{def-mod}
\omega_{r}(f,\delta)_{p}:=\sup_{|h|\le \delta }
\| \Delta_h^r f\|_{p},
\end{equation}
where
$$
\Delta_h f(x)= f(x+h) -f(x),\quad \Delta_h^r =
\Delta_h \Delta_h^{r -1},\quad h\in \R^d,\quad d\ge 1,
$$
and  $|h|:=(\sum_{j=1}^d |h_j|^2)^{1/2}$. Throughout the paper, by  $\Vert f\Vert_p$ we mean the (quasi-)norm $\Vert f\Vert_p=\Vert f\Vert_{L_p(\R^d)}$, $0<p \le \infty$.

We also recall the definition of the modulus of smoothness  $\omega_{\a} (f,\delta)_{p}$  of fractional order $\a >0$: 
\begin{equation}\label{def-mod+}
\omega_{\a} (f,\delta)_{p}
 :=\sup_{|h|\le \delta }
 \left\|
\Delta_{h}^{\a} f \right\|_{p}, 
\end{equation}
\index{\bigskip\textbf{Functionals and functions}!$\omega_{\alpha}(f,\delta)_{p}$, modulus of smoothness of fractional order $\alpha$}\label{MODA}
where
\begin{equation}\label{def-mod++}
\Delta_{h}^{\a} f(x) =\sum\limits_{\nu=0}^\infty(-1)^{\nu}
\binom{\a}{\nu} f\,\big(x+(\a-\nu) h\big),\quad h\in \R^d,
\end{equation}\index{\bigskip\textbf{Operators}!$\Delta_h^\alpha$, fractional difference of order $\alpha>0$}\label{DELTAA}
\!\!\!and
$
\binom{\a}{\nu}=\frac{\a (\a-1)\dots (\a-\nu+1)}{\nu!}$,\quad
$\binom{\a}{0}=$1 (see \cite{But, samko, tabeR}). \index{\bigskip\textbf{Numbers, relations}!$\binom{\a}{\nu}$, binomial coefficients}\label{BINOMA}
It is clear  that for integer $\a$ definition
(\ref{def-mod+}) coincides
 with the classical definition (\ref{def-mod}).
Note that in the case of $0<p<1$, since
\begin{equation}\label{binom}
  \sum_{\nu=0}^\infty \Big|\binom{\a}{\nu}\Big|^p<\infty\quad\text{for}\quad \a\in\N\cup \big((1/p-1),\infty\big),
\end{equation}
it is natural to assume that
$\a>(1/p-1)_+$
while defining the fractional modulus of smoothness in $L_p$.

Some basic properties of moduli of smoothness of integer order can be found in~\cite{BeSh}, \cite{DL}, \cite{timan}, and~\cite{TB}.

\subsection{Main properties of moduli of smoothness in $L_p(\R^d)$}
Below we present Properties 1--17 of (fractional) moduli of smoothness. Historical comments are given after each statement.

\begin{property}\label{pr1}
For  $f,f_1,f_2\in L_p(\R^d)$, $0< p\le \infty$, and $\alpha\in \N\cup ((1/p-1)_+,\infty)$, we have: 
\begin{itemize}
\item[{\rm (a)}]
$ \omega_\a(f,\delta)_p$ is a non-negative non-decreasing function of $\delta$ such that
$\lim\limits_{\delta\to 0+} \omega_\a(f,\delta)_p=0;$ 
\item[{\rm (b)}]
$              \omega_\a(f_1+f_2,\delta)_p\le 2^{(\frac1p-1)_+}
\big(     \omega_\a(f_1,\delta)_p+\omega_\a(f_2,\delta)_p\big);
$
 \item[{\rm (c)}]
$              \omega_{\alpha}(f,\delta)_p\le C(\a,p)\Vert f\Vert_p$;
 \item[{\rm (d)}]
$          \Vert f\Vert_p  \le  \lim_{\d\to \infty}\omega_{\alpha}(f,\delta)_p\le C(\a,p)\Vert f\Vert_p$ if $0<p<\infty$.
\end{itemize}
\end{property}


\begin{property}\label{pr4}
Let $f\in L_p(\R^d)$, $0<p\le \infty$, $\a\in \N\cup ((1/p-1)_+,\infty)$, $\l>0$, and $\d>0$. Then
\begin{equation}\label{Run0}
  \w_\a(f,\l \d)_p\lesssim  (1+\l)^{\a+d(\frac1p-1)_+}\w_\a(f,\d)_p.
\end{equation}
Equivalently, for any $0<h<\d$, one has
\begin{equation}\label{eqMonMod}
 \frac{\w_\a(f,\d)_p}{\d^{\a+d(\frac1p-1)_+}}\lesssim \frac{\w_\a(f,h)_p}{h^{\a+d(\frac1p-1)_+}}.
\end{equation}
\end{property}

If $1\le p\le \infty$ inequality~\eqref{Run0} trivially follows from the equivalence between moduli of smoothness and $K$-functionals (see~\eqref{eq.th6.0d++pr}). If $0<p<1$ only the periodic analogue of~\eqref{Run0} was known.
For $d=1$ this was obtained in \cite{rad} (for integer $\a$) and in~\cite{RS3} (for positive $\a$).
For $d\in \N$ and $\a>0$  see~\cite{KT19m}.

\medskip

In the following three properties, we deal with moduli of smoothness of integer order.

\begin{property}\label{mix}
Let $f\in L_p(\R^d)$, $0<p\le\infty$, and $r\in\N$. Then, for any $\d>0$, we have
\begin{equation}\label{mix1}
\w_r\(f,\d\)_p\asymp \sum_{k_1+\dots+k_d=r}\w_{k_1,\dots,k_d}\(f,\d\)_p,
\end{equation}
where $\w_{k_1,\dots,k_d}\(f,\d\)_p$ is the mixed modulus of smoothness, that is,
$$
\w_{k_1,\dots,k_d}\(f,\d\)_p=\sup_{|h|\le \d} \Vert \Delta_{{\rm e}_1 h_1}^{k_1}\dots \Delta_{{\rm e}_d h_d}^{k_d}f\Vert_p
$$
(here $\{{\rm e}_j\}_{j=1}^d$ is the standard basis in $\R^d$).
Moreover, if $1<p<\infty$, then
\begin{equation}\label{mix2}
\w_r\(f,\d\)_p\asymp \sum_{j=1}^d\w_{r}^{(j)}\(f,\d\)_p,
\end{equation}
where $\w_{r}^{(j)} (f,\d)_p$ is the partial modulus of smoothness, that is,
\begin{equation}\label{moddd}
  \w_{r}^{(j)} (f,\d)_p=\sup_{|h|\le \d} \Vert \Delta^{r}_{{\rm e}_j h}f\Vert_p,\quad j=1,\dots,d.
\end{equation}
\end{property}

For equivalence~\eqref{mix1} in the case $1\le p\le \infty$ see~\cite[p.~338]{BeSh}. For periodic functions equivalence~\eqref{mix2} was given in~\cite{TiMod}. Note also that the mixed moduli of smoothness were studied in, e.g., \cite{DeSh, nikol-book, PST, PST2016}.

\medskip

Recall that the homogeneous Sobolev norm is given by
$\Vert f \Vert_{\dot W_p^{r}}=\sum_{\nu_1+\dots+\nu_d=r}\Vert D^\nu f\Vert_p$,
where as usual $D^\nu f=\frac{\partial^\nu}{\partial^{\nu_1}x_1\dots \partial^{\nu_d} x_d}f$.


\begin{property}\label{sob}
Let $f\in L_p(\R^d)$, $1<p<\infty$, and $r\in \N$. Then
\begin{equation}\label{esob}
  \sup_{h>0}\frac{\w_r(f,h)_p}{h^r}\asymp \Vert f\Vert_{\dot W_p^r}.
\end{equation}
\end{property}

This property can be found in, e.g.,~\cite{Kr07} and~\cite{Mil}, see also~\cite{Cw}.

\begin{property}\label{sum}
Let $0< p,q,s\le \infty$, $1/p+1/q=1/s$, and $r\in \N$. Then for any $f\in L_p(\R^d)$ and $g\in L_q(\R^d)$, we have
\begin{equation}\label{sum1}
  \w_r(fg,\d)_s\le \sum_{k=0}^r \binom{r}{k}\w_k(f,\d)_p \cdot\w_{r-k}(g,\d)_q ,
\end{equation}
where $\w_0(f,\d)_p=\Vert f\Vert_p$ and $\w_{0}(g,\d)_q=\Vert g\Vert_q$.
\end{property}

For the proof of inequality~\eqref{sum1} see~\cite{Jo72}; see also~\cite[4.6.12]{TB} for applications of this inequality.

\begin{property}\label{aver}
Let $f\in L_p(\R^d)$, $0<p\le\infty$, $0<q<\infty$, and $r\in\N$. Then, for any $\d>0$,
\begin{equation}\label{aver1}
  \w_r(f,\d)_p\asymp \(\d^{-d}\int_{|h|\le \d} \Vert \D_h^r f\Vert_p^q dh\)^{1/q}
\end{equation}
and if, additionally, $q\le p$, then
\begin{equation}\label{aver1-}
  \w_r(f,\d)_p\asymp \bigg\Vert \bigg(\d^{-d}\int_{|h|\le \d} |\D_h^r f(\cdot)|^q dh\bigg)^{1/q}\bigg\Vert_p.
\end{equation}

Moreover, if $1<p<\infty$ and $\a>0$, then, for any $\d>0$,
\begin{equation}\label{aver2}
  \w_\a(f,\d)_p\asymp \d^{-d}\int_{|h|\le \d} \Vert \D_h^\a f\Vert_p dh
\end{equation}
and 
\begin{equation}\label{aver2+}
  \w_{2r}(f,\d)_p\asymp \bigg\Vert \d^{-d}\int_{|h|\le \d} \D_h^{2r} f(\cdot -rh)  dh \bigg\Vert_p.
\end{equation}

If $d=1$, then equivalence~\eqref{aver2} holds also for any $0<p\le \infty$ and $\a>(1/p-1)_+$.
\end{property}

In the case $1\le p\le \infty$, the first equivalence~\eqref{aver1} is well known, see, e.g.,~\cite[Ch.~6, \S~5]{DL} or \cite[Appendix A]{KaMilXi}; see also~\cite{DeSh}. For periodic functions $f\in L_p(\T)$, $0<p<1$, and $\a\in\big((1/p-1)_+,\infty\big)$, equivalence~\eqref{aver2} was derived in~\cite{K17}. Equivalence~\eqref{aver2+} for functions on $\T^d$ was obtained in~\cite[8.2.9]{TB}.



\begin{property}\label{pr6} {\sc (Marchaud inequality).}
  Let $f\in L_p(\R^d)$, $0<p\le \infty$, $\a\in \N\cup
    ((1/p-1)_+,\infty)$, and $\g>0 $ be  such that $\a+\g\in \N\cup
    ((1/p-1)_+,\infty)$. Then, for any $\d\in (0,1)$, we have
\begin{equation}\label{eq.lemMarchaudMod-pr}
  \w_\a(f,\d)_p\lesssim \d^\a \(\int_\d^1 \(\frac{\w_{\g+\a}(f,t)_p}{t^{\a}}\)^\theta\frac{dt}{t}+\Vert f\Vert_p^\theta\)^\frac1\theta,
\end{equation}
where
\begin{equation}\label{tetha+}
\theta=\theta(p)=\left\{
       \begin{array}{ll}
         \min(p,2), & \hbox{$p<\infty$;} \\
         1, & \hbox{$p=\infty$.}
       \end{array}
     \right.
\end{equation}
Equivalently,
\begin{equation}\label{eq.lemMarchaudModinf}
  \w_\a(f,\d)_p\lesssim \d^\a \(\int_\d^\infty \(\frac{\w_{\g+\a}(f,t)_p}{t^{\a}}\)^\theta\frac{dt}{t}\)^\frac1\theta.
\end{equation}
\end{property}

The Marchaud inequality for the moduli of smoothness of integer order is the classical result in approximation theory (see, e.g.,~\cite[p. 48]{DL} and~\cite{marchaud}). The case $1<p<\infty$ for fractional moduli was handled   in~\cite[Theorem 2.1]{Treb}.

\begin{property}\label{pr6+} {\sc (Reverse Marchaud inequality).}
  Let $f\in L_p(\R^d)$, $0<p\le \infty$, and $\a,\b\in \N\cup
    ((1/p-1)_+,\infty)$. Then
\begin{equation}\label{propodmod}
               \omega_{\alpha+\b}(f,\delta)_p\lesssim \omega_{\b}(f,\delta)_p.
\end{equation}
Moreover, if $1<p<\infty$ and $\a,\b>0$, then, for any $\d \in (0,1)$, we have
\begin{equation}\label{SJJ}
  \d^\a\bigg(\int_\d^1 \( \frac{\w_{\a+\b}(f,t)_p}{t^{\a}}\)^\tau \frac{dt}{t}   \bigg)^{\frac1\tau}
\lesssim \w_\b\(f,\d\)_p,
\end{equation}
where $\tau=\max(p,2).$
\end{property}

Inequality~\eqref{propodmod} easily follows from~\eqref{binom}. Inequalities of type~\eqref{SJJ} in the one-dimensional periodic case were first obtained by Timan in~\cite{Tim66}. The general case was developed in~\cite{ddt}.


\begin{property}\label{pr7} {\sc (Sharp Ulyanov inequality).}
    Let $f\in L_p(\R^d)$, $0<p<q\le\infty$,  $\a \in \N\cup ((1-1/q)_+,\infty)$, and $\g\ge 0$ be such that $\a+\g \in \N\cup ((1/p-1)_+,\infty)$.
    Then, for any $\d \in (0,1)$, we have
\begin{equation}\label{ulpr}
         \w_\a(f,\d)_q\lesssim \(\int_0^\d   \(
   \frac{\w_{\a+\g}(f,t)_p}{t^\g}\eta\(\frac 1t\)
 \)^{q_1}\frac {dt}{t}\)^\frac 1{q_1}+\d^\a \Vert f\Vert_p,
\end{equation}
where
$$
q_1:=\left\{
      \begin{array}{ll}
        q, & \hbox{$q<\infty;$} \\
        1, & \hbox{$q=\infty$}
      \end{array}
    \right.
$$
and

{\rm 1)} if $0<p\le 1$ and $p<q\le\infty$, then
\begin{equation}\label{ulpr1}
  \eta(t)
:=\left\{
         \begin{array}{ll}
           t^{d(\frac1p-1)}, & \hbox{$\g> d\(1-\frac1q\)_+$}; \\
           t^{d(\frac1p-1)}, & \hbox{$\g=d\(1-\frac1q\)_+\ge 1$, $d\ge 2$, and $\a+\g\in \N$}; \\
           t^{d(\frac1p-1)}\ln^\frac1{q_1} (t+1), & \hbox{$\g=d\(1-\frac1q\)_+\ge 1$, $d\ge 2$,  and $\a+\g\not\in \N$}; \\
           t^{d(\frac1p-1)}\ln^\frac1{q} (t+1), & \hbox{$0<\g=d\(1-\frac1q\)_+=1$ and $d=1$}; \\
           t^{d(\frac1p-1)}\ln^\frac1{q} (t+1), & \hbox{$0<\g=d\(1-\frac1q\)_+<1$}; \\
           t^{d(\frac1p-\frac1q)-\g}, & \hbox{$0< \g<d\(1-\frac1q\)_+$};\\
           t^{d(\frac1p-\frac1q)}, & \hbox{$\g=0$,}
         \end{array}
       \right.
\end{equation}

{\rm 2)} if $1<p< q\le\infty$, then
\begin{equation}\label{ulpr2}
  \eta(t):=
\left\{
         \begin{array}{ll}
           1, & \hbox{$\g\ge d(\frac1p-\frac1q),\quad q<\infty$}; \\
           1, & \hbox{$\g> \frac dp,\quad q=\infty$}; \\
           \ln^\frac1{p'} (t+1), & \hbox{$\g=\frac dp,\quad q=\infty$}; \\
           t^{d(\frac1p-\frac1q)-\g}, & \hbox{$0\le \g<d(\frac1p-\frac1q)$}.\\
         \end{array}
       \right.
\end{equation}
Moreover, the term $\d^\a\Vert f\Vert_p$ in~\eqref{ulpr} can be dropped  if any of the following conditions holds:
\begin{equation*}
      \left\{
      \begin{array}{ll}
        \g=0, & \hbox{$0<p<q\le 1;$} \\
        0\le \g <d(1-\frac1q), & \hbox{$0<p\le 1<q\le \infty;$} \\
        \g=1 , & \hbox{$d=1$, $0<p\le 1<q=\infty;$} \\
        \g =d(1-\frac1q)\ge 1, & \hbox{$0<p\le 1<q\le \infty$, $d\ge 2$, $\a+\g\in \N;$}\\
        0\le \g\le d(\frac1p-\frac1q), & \hbox{$1<p<q<\infty;$} \\
        0\le \g<\frac dp, & \hbox{$1<p<q=\infty.$}
      \end{array}
    \right.
\end{equation*}
\end{property}

In the case $\g=0$, inequality~\eqref{ulpr} is the classical Ulyanov inequality of different metrics~\cite{U} given by
\begin{equation}\label{UUU}
         \w_\a(f,\d)_q\lesssim \(\int_0^\d   \Big(
   \frac{\w_{\a}(f,t)_p}{t^\g}
 \Big)^{q_1}\frac {dt}{t}\)^\frac 1{q_1},\quad \g=d(1/p-1/q),\, 0<p<q\le \infty.
\end{equation}
%
The detailed historical review can be found in~\cite{KT19m}.
Let us only note that the comprehensive study of~\eqref{UUU} was given in~\cite{diti}.
The sharp Ulyanov inequality in the form
$$
         \w_\a(f,\d)_q\lesssim \(\int_0^\d   \Big(
   \frac{\w_{\a+\g}(f,t)_p}{t^\g}
 \Big)^{q}\frac {dt}{t}\)^\frac 1{q},\quad \g=d(1/p-1/q),\, 1<p<q<\infty,
$$
 was first derived in~\cite{ST} and~\cite{Treb} (see \cite{Ti} for the limiting cases $p=1$ and/or $q=\infty$ for functions on $\T$).
The periodic analogue of~\eqref{ulpr} for any $\g>0$ and $0<p<q\le \infty$ has been recently obtained in~\cite{KT19m}. For various moduli of smoothness  sharp Ulyanov inequalities were also established in \cite{oscar, polina, goga}.

Note that in the proof of Property~\ref{pr7} given in Section~3, we obtain slightly stronger inequalities than those given in~\eqref{ulpr} including some important corollaries. 

Now we concern with the Kolyada-type inequality, which is another improvement of the classical Ulyanov inequality~\eqref{UUU} along with the sharp Ulyanov inequality~\eqref{ulpr}.

\begin{property}\label{Koly} {\sc (Kolyada inequality).}
Let $f\in L_p(\R^d)$, $1<p< q<\infty$,
$\theta=d\(1/p-1/q\)$, and $\a\in \N\cup ((1/p-1)_+,\infty)$, $\a>\t$. Then
\begin{equation}\label{eqth3.1Kmod}
    \d^{\a-\theta}\(\int_\d^\infty
\(\frac{\w_\a(f,t)_{q}}{t^{\a-\theta}}\)^p\frac{dt}{t}\)^\frac1p \lesssim\(\int_0^\d \(\frac{\w_\a(f,t)_{p}}{t^\t}\)^q\frac{dt}{t}\)^\frac1q.
\end{equation}
\end{property}

The periodic analogue of~\eqref{eqth3.1Kmod} was obtained by~Kolyada~\cite{kol}. He also  derived such inequalities for analytic Hardy spaces on the disc in the case  $0<p<q<\infty$. For functions on $\R^d$, inequality~\eqref{eqth3.1Kmod} was proved for Lebesgue spaces in~\cite{Treb} (see also~\cite{gol1}) and for Hardy spaces in~\cite{KT19m}.   
%
%
Note that (\ref{eqth3.1Kmod}) is not valid in $L_p(\R^d)$ spaces for $p=1$, $d=1$ but is true for $p=1$, $d\ge 2$.

%

\begin{property}\label{pr8}
     Let $f\in L_p(\R^d)$, $1 \le p \le \infty$ and $r, m \in \mathbb{N}$. Then
 \begin{equation}\label{MarchaudClassic}
        \d^{-m} \omega_{r+m}(f,\d)_p \lesssim \sup_{|\beta| =m} \omega_r(D^\beta f, \d)_p \lesssim \int_0^\d u^{-m} \omega_{r+m}(f,u)_p \frac{du}{u}.
    \end{equation}
Moreover, if $1<p<\infty$, then
    \begin{equation}\label{inequalTrebels1}
        \sup_{|\beta| =m} \omega_r(D^\beta f, \d)_p \lesssim \left(\int_0^\d (u^{-m} \omega_{r+m}(f,u)_p)^{\theta} \frac{du}{u}\right)^{1/\theta},
    \end{equation}
where $\theta=\min\(p,2\)$.
If, in addition, $f$ is such that $\frac{\partial^m f}{\partial x_j^m} \in L_p(\mathbb{R}^d)$ for $j=1, \ldots, d$, then,
    \begin{equation}\label{inequalTrebels2}
          \left(\int_0^\d (u^{-m} \omega_{r+m}(f,u)_p)^\tau \frac{du}{u}\right)^{1/\tau} \lesssim \sup_{j=1, \ldots, d} \omega_r\left(\frac{\partial^m f}{\partial x_j^m}, \d\right)_p,
    \end{equation}
where $\tau=\max\(p,2\)$.
\end{property}

For inequalities~\eqref{MarchaudClassic} see~\cite[p.~342--343]{BeSh}.
Inequality~\eqref{inequalTrebels1} was proved in~\cite[Theorem 2.3]{Treb}.
Inequality~\eqref{inequalTrebels2} has been recently obtained in~\cite[Theorem~12.2]{DoTi}.
See also~\cite{diti07}, \cite{K18}, and~\cite{KL19} for periodic analogues of~\eqref{MarchaudClassic} and~\eqref{inequalTrebels1} in the case $0<p<1$.

\medskip

To formulate the  next properties, we introduce some notations.  By  $\mathcal{B}_{\s,p}=\mathcal{B}_{\s,p}(\R^d)$, $\s>0$, $0<p\le \infty$, we denote the Bernstein space of entire functions of exponential type~$\s$ (e.f.e.t.). That is, $f \in \mathcal{B}_{\s,p}$ if $f\in L_p(\R^d)\cap C_b(\R^d)$ and $\supp \mathcal{F}(f)\subset B_\s=\{x\in \R^d\,:\, |x|<\s\}$. Here and in what follows,
the Fourier transform of $f\in L_1(\R^d)$ is given by
$$
\widehat{f}(\xi)=\mathcal{F}(f)(\xi)=\int_{\R^d} f(x) e^{-i(x,\xi)} dx.
$$
In the case $0<p<1$, we assume that  $f$ belongs to $\mathcal{S}'(\R^d)$, the space of all tempered distributions on $\R^d$.

Let $E_\s(f)_{p}$ be the best approximation of $f\in L_p(\R^d)$ by e.f.e.t. $P\in\mathcal{B}_{\s,p}$, i.e.,
$$
E_\s(f)_{p}=\inf_{P\in \mathcal{B}_{\s,p}}\Vert f-P\Vert_{p}.
$$
We also set $E_0(f)_p=\Vert f\Vert_p$ for $p<\infty$ and $E_0(f)_\infty=\inf_{c\in \C}\Vert f-c\Vert_\infty$.

\begin{property}\label{pr11} {\sc (Jackson inequality).}
Let $f\in L_p(\R^d)$, $0<p\le\infty$, $\s>0$, and $\a\in\N\cup \big((1/p-1)_+,\infty\big)$. Then
\begin{equation}\label{JacksonSO-pr}
E_\s(f)_p\lesssim \w_\a\(f,\frac1\s\)_p.
\end{equation}
Moreover, if $1<p<\infty$ and $\a>0$, then for any $\s\ge 1$ we have
\begin{equation}\label{SharpJack}
\frac1{\s^\a}\bigg(\sum_{k=1}^{[\s]} (k+1)^{\a \tau-1}E_k(f)_p^{\tau}\bigg)^{\frac1\tau}
\lesssim \w_\a\(f,\frac1\s\)_p,
\end{equation}
where $\tau=\max(p,2).$
\end{property}

For Jackson's inequality~\eqref{JacksonSO-pr} in the case $1\le p\le \infty$ see, e.g.,~\cite[p.~279]{timan}. In the case $0<p<1$, $\a\in \N$, and $d=1$, this inequality was derived in~\cite{Tab81} (see also~\cite{BRS09}). Sharp Jackson inequality~\eqref{SharpJack} was obtained in~\cite{ddt}.
It is worth mentioning that this inequality is equivalent to~\eqref{SJJ}, see \cite{ddt}.
Some historical remarks on Jackson's inequality for periodic functions can be found in \cite{ivanov}.

\medskip


\begin{property}\label{pr12} {\sc (Inverse approximation theorem).}
  Let $f\in L_p(\R^d)$, $0<p\le\infty$, $\a \in \N \cup ((1/p-1)_+,\infty)$, and $\s\ge 1$. Then  we have
\begin{equation}\label{eqconverseMod}
\w_\a\(f,\frac1\s\)_p\lesssim \frac1{\s^\a}\bigg(\sum_{k=0}^{[\s]} (k+1)^{\a \theta-1}E_k(f)_p^{\theta}\bigg)^{\frac1\theta},
\end{equation}
where $\theta=\min(p,2)$ if $p<\infty$ and $\theta=1$ if $p=\infty$.
\end{property}

In the case $1\le p\le \infty$, $\a\in\N$, inequality~\eqref{eqconverseMod} is well known (see, e.g.,~\cite[Ch.~7]{DL},  \cite{dai-d}, \cite{timan} and the references therein). In the case $0<p<1$, $d=1$, and $\a\in \N$, inequality~\eqref{eqconverseMod} was obtained in~\cite{Tab81}.
In other cases, the result seems to be new (cf.~\cite{dit-acta}). The proof is based on the corresponding  Bernstein inequality.



\begin{property}\label{Rathor}
Let $f\in L_p(\R^d)$, $0< p\le \infty$, and $\a\in \N\cup ((1/p-1)_+,\infty)$. The following conditions are equivalent:

$(i)$ for some $\b>\a+(1/p-1)_+$ we have 
$$
\w_\a(f,\d)_p\asymp \w_\b(f,\d)_p\quad\text{for all}\quad \d\in (0,1),
$$

$(ii)$ there holds
\begin{equation*}
\w_\a\(f,\frac1\s\)_p\asymp E_\s(f)_p\quad\text{for all}\quad \s\ge 1.
\end{equation*}
\end{property}

In the case $1\le p\le \infty$ and $\a\in \N$, this property was obtained in~\cite{Rathor} (the case $d=1$) and in~\cite{GIT} (the case $d\ge 1$).
In the case $0<p<1$, Property~\ref{Rathor} is known only for functions $f\in L_p(\T)$ and $f\in L_p[-1,1]$, see, e.g.,~\cite{K07} and~\cite{K18}.

\medskip

In what follows, we will use the directional derivative of $f$ of order $\a>0$ along a vector $\zeta\in \R^d$  given by
$$
D_{\zeta}^{\a} f(x)=\mathcal{F}^{-1}\({(i\xi,\zeta)^\a} \widehat{f}(\xi)\)(x).
$$


\begin{property}\label{pr13}
  Let $f\in L_p(\R^d)$, $0< p\le \infty$, and $\a \in \N \cup ((1/p-1)_+,\infty)$. Then
  \begin{equation}\label{eq7R}
    2^{-n\a} \sup_{|\zeta|=1,\, \zeta\in \R^d}\Vert D_\zeta^\a P_{2^n} \Vert_p
\lesssim \omega_\a(f,2^{-n})_{p}
\lesssim \sum_{k=n+1}^\infty 2^{-k\a} \sup_{|\zeta|=1,\, \zeta\in \R^d}\Vert D_\zeta^\a P_{2^k} \Vert_p,
  \end{equation}
  where $P_{2^k}\in \mathcal{B}_{2^k,p}$, $k\in \N$, are such that $\Vert f-P_{2^k}\Vert_p=E_{2^k}(f)_p$.

Moreover, if  $1<p<\infty$, then
  \begin{equation}\label{eq7-}
\begin{split}
    \(\sum_{k=n+1}^\infty 2^{-k\a\tau}\Vert (-\Delta)^{\a/2}P_{2^k}\Vert_{p}^\tau\)^\frac1\tau&\lesssim \omega_\a(f,2^{-n})_{p}\\
&\lesssim  \(\sum_{k=n+1}^\infty 2^{-k\a\theta}\Vert (-\Delta)^{\a/2}P_{2^k}\Vert_{p}^\theta\)^\frac1\theta,
 \end{split}
\end{equation}
where  $\tau=\max(2,p)$ and $\theta=\min(2,p)$.
\end{property}

Concerning the existence of $P_{2^k}$ in Property~\ref{pr13} see, e.g.,~\cite[Theorem~2.6.3]{timan}.
The above inequalities~\eqref{eq7R} and~\eqref{eq7-} were obtained in~\cite{KT19a}.

\begin{remark}
Note that in the case $1<p<\infty$, the best approximants $P_{2^k}$ in inequalities~\eqref{eq7-} can be replaced by other methods of approximation such as the $\ell_q$-Fourier means with $q=1,\infty$, the de la Vall\'ee Poussin means, or the Riesz spherical means. See details in~\cite{KT19a}.
\end{remark}

\medskip

Now we deal with equivalence results for moduli of smoothness in terms of $K$-functionals and their realizations.   Let us recall that for any $f\in L_p(\R^d)$, $1\le p\le \infty$, and $r\in \N$, we have (see~\cite{johnen} and~\cite[Ch.5]{BeSh})
\begin{equation}\label{mk}
\w_r(f,\d)_p\asymp K(f,\d; L_p(\R^d), \dot W_p^r(\R^d)):=\inf_g\{\Vert f-g\Vert_p+\d^r \Vert g\Vert_{\dot W_p^r}\}.
\end{equation}
Moreover, in the case $1<p<\infty$ and $\a>0$ we have (see~\cite{Wil})
\begin{equation}\label{kwil}
  \w_\a(f,\d)_p\asymp K(f,\d; L_p(\R^d), \dot H_p^\a(\R^d)):=\inf_g\{\Vert f-g\Vert_p+\d^\a \Vert (-\Delta)^{\a/2}g\Vert_{p}\}.
\end{equation}
Here the Sobolev and the Riesz potential spaces are given by
$$
\dot W_p^r(\R^d):=\{f\in \dot{\mathcal{S}}'(\R^d):\Vert f \Vert_{\dot W_p^{r}}=\sum_{|\nu|_{\ell_1}=r}\Vert D^\nu f\Vert_p<\infty\}
$$
and
$$
\dot H_p^\a(\R^d):=\{f\in \dot{\mathcal{S}}'(\R^d):\Vert f \Vert_{\dot H_p^{\a}}=\Vert (-\Delta)^{\a/2} f\Vert_p<\infty\}
$$
respectively, where $\dot{\mathcal{S}}'(\R^d)$ is the space of all continuous functionals on $\dot{\mathcal{S}}(\R^n)$ given by
$$
\dot{\mathcal{S}}(\R^n)=\left\{\vp\in{\mathcal{S}}(\R^n)\,:\, (D^\nu
\widehat{\vp})(0)=0\,\,\,\text{for\ all}\,\,\,\nu\in\N^n\cup\{0\}\right\}.
$$

To obtain analogues of the above equivalences for all $0<p\le \infty$ and $\a>0$, we introduce the following $K$-functional with respect to the  directional derivative: 
$$
\mathcal{K}_\a(f,\d)_p=\inf_{g} \{\Vert f-g\Vert_p+\d^\a \sup_{|\zeta|=1,\, \zeta\in \R^d}\Vert D_\zeta^\a g \Vert_p \}.
$$

\begin{property}\label{pr9}
  Let $f\in L_p(\R^d)$, $1\le p\le \infty$, and $\a>0$. Then, for any $\d\in (0,1)$, we have
\begin{equation}\label{eq.th6.0d++pr}
{\omega}_\a(f,\d)_p\asymp \mathcal{K}_{\a}(f,\d)_{p}.
\end{equation}
\end{property}

This equivalence fails for  $0<p<1$ since in this case $\mathcal{K}_\a(f,\d)_{p}\equiv 0$ (see \cite{DHI}). A
suitable substitute for the $K$-functional for $p<1$ is the
realization concept given by
$$
\mathcal{R}_{\a}(f,\d)_{p}=\inf_{P\in\mathcal{B}_{1/\d,p}}\left\{\Vert
f-P\Vert_p+\d^{\a}\sup_{\zeta\in \R^d,\,|\zeta|=1}\Vert D_{\zeta}^{\a} P\Vert_{p}\right\}.
$$

\begin{property}\label{pr10}
Let $f\in L_p(\R^d)$, $0<p\le \infty$, and $\a\in\N\cup \big((1/p-1)_+,\infty\big)$. Then, for any $\d\in (0,1)$, we have
\begin{equation}\label{eq.th6.0d-pr}
{\omega}_\a(f,\d)_p\asymp \mathcal{R}_{\a}(f,\d)_{p}.
\end{equation}
Moreover, if $P\in\mathcal{B}_{1/\d,p}$ is such that
$
\Vert f-P\Vert_p\lesssim E_{1/\d}(f)_p,
$
then
\begin{equation}\label{eqvwithbest-pr}
  {\omega}_\a(f,\d)_p\asymp \Vert
f-P\Vert_p+\d^{\a}\sup_{\zeta\in \R^d,\,|\zeta|=1}\Vert D_{\zeta}^{\a} P\Vert_{p}.
\end{equation}
In particular, we have for $r\in\N$
\begin{equation}\label{RealW}
{\omega}_r(f,\d)_p\asymp \inf_{P\in\mathcal{B}_{1/\d,p}}\left\{\Vert
f-P\Vert_p+\d^{r}
\Vert P\Vert_{\dot W_p^r}
\right\}.
\end{equation}
Moreover, if $P\in\mathcal{B}_{1/\d,p}$ is such that
$
\Vert f-P\Vert_p\lesssim E_{1/\d}(f)_p,
$
then
\begin{equation*}
  {\omega}_r(f,\d)_p\asymp \Vert
f-P\Vert_p+\d^{r}\Vert P\Vert_{\dot W_p^r}.
\end{equation*}
\end{property}

In the case $1<p<\infty$, equivalence~\eqref{eq.th6.0d-pr} is a combination of results from~\cite{HI} and~\eqref{kwil}.
For the case $0<p\le 1$,  $d=1$, and $\a\in\N$ see \cite{DHI}.

\begin{remark}\label{rem1}
  Note that Properties~\ref{pr1}--\ref{pr10} except Property~\ref{pr1}(d) hold for periodic functions from $L_p(\T^d)$. Observe that for $f\in L_p(\T^d)$, $0<p\le \infty$, the term $\Vert f\Vert_p$ in~\eqref{eq.lemMarchaudMod-pr} and~\eqref{ulpr} can be dropped.
\end{remark}

\section{Useful inequalities for entire functions}

\subsection{Polynomial inequalities of Nikol'skii--Stechkin--Boas--types
}\label{sec3}


The results of this subsection are crucial to verify Properties~\ref{pr4}, \ref{pr12},  \ref{pr13},
and~\ref{pr10}.

\begin{lemma}\label{lemNSB}
Let $0<p\le\infty$, $\a,\s>0$, and $\zeta\in \R^d$,
$0<|\zeta|\le 1/{\s}$. Then, for any
$
P_\s\in \mathcal{B}_{\s,p},
$
we have
\begin{equation}\label{ineqNS3}
\Vert D_{\zeta}^{\a} P_\s\Vert_{p}
\asymp\Vert\Delta_\zeta^\a P_\s\Vert_{p},
\end{equation}
where the constants in this equivalence depend only on $p$, $\a$, and $d$.
\end{lemma}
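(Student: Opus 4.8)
The plan is to recast the claimed equivalence as a Fourier multiplier statement on the Bernstein space and then invoke a scale-invariant multiplier theorem for entire functions of exponential type. First I would compute the symbols of the two operators acting on $P_\s$. Expanding the fractional difference and summing the binomial series gives
\begin{equation*}
\mathcal{F}\big(\D_\zeta^\a P_\s\big)(\xi)=\big(e^{i(\zeta,\xi)}-1\big)^\a\,\widehat{P_\s}(\xi),\qquad \mathcal{F}\big(D_\zeta^\a P_\s\big)(\xi)=\big(i(\zeta,\xi)\big)^\a\,\widehat{P_\s}(\xi),
\end{equation*}
(the first identity for integer $\a$ is elementary; for fractional $\a$ and $0<p<1$ one works in $\mathcal{S}'(\R^d)$ and uses the absolute convergence guaranteed by~\eqref{binom}). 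Hence on the spectrum of $P_\s$ the two functions differ by the multiplier $m(\xi)=\Phi_\a\big((\zeta,\xi)\big)$, where $\Phi_\a(s)=\big((e^{is}-1)/(is)\big)^\a$, and proving~\eqref{ineqNS3} reduces to showing that multiplication by $m$ and by $1/m$ are bounded operators on $\mathcal{B}_{\s,p}$ with constants independent of $\zeta$ and $\s$.

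The crucial point is that the symbol depends on $\xi$ only through the scalar $(\zeta,\xi)$, and the hypothesis $|\zeta|\le 1/\s$ forces $|(\zeta,\xi)|\le|\zeta|\,|\xi|< 1$ on $\supp\widehat{P_\s}\subset B_\s$. Using the factorization
\begin{equation*}
\frac{e^{is}-1}{is}=e^{is/2}\,\frac{\sin(s/2)}{s/2},
\end{equation*}
one sees that on $|s|\le 1$ (indeed on a fixed neighborhood of $[-1,1]$) the function $\Phi_\a$ is analytic, nonvanishing, and takes values in the right half-plane, so its principal power is well defined; consequently both $\Phi_\a$ and $1/\Phi_\a$ are smooth with all derivatives bounded, uniformly in $\a$. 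Thus the essentially one-dimensional nature of the symbol yields the required uniformity in $\zeta$ and $\s$.

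To pass from bounded smooth symbols to $L_p$-bounds I would use the standard cutoff-and-rescale device. Fix $\psi\in C_0^\infty(\R^d)$ with $\psi\equiv 1$ on $B_1$ and $\supp\psi\subset B_2$, and replace $m$ by $\widetilde m(\xi)=\psi(\xi/\s)\Phi_\a((\zeta,\xi))$, which agrees with $m$ on $B_\s$ and is compactly supported. Setting $\xi=\s\eta$ and $\zeta'=\s\zeta$ (so $|\zeta'|\le 1$), the rescaled symbol $\mu(\eta)=\psi(\eta)\Phi_\a((\zeta',\eta))$ lies in $C_0^\infty(B_2)$ with seminorms bounded uniformly in $\zeta'$ (the argument $(\zeta',\eta)$ stays in $(-2,2)$, where $\Phi_\a$ is still analytic and nonvanishing). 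Then $\D_\zeta^\a P_\s=\big(\s^d\check\mu(\s\,\cdot)\big)* D_\zeta^\a P_\s$, and an application of a convolution estimate for band-limited functions finishes the argument: for $1\le p\le\infty$ one uses Young's inequality together with $\check\mu\in L_1$, while for $0<p<1$ one uses the Nikol'skii-type scale-invariant bound $\Vert K*g\Vert_p\lesssim \s^{d(1/p-1)}\Vert K\Vert_p\Vert g\Vert_p$ valid for $g\in\mathcal{B}_{\s,p}$, whose $\s$-factor exactly cancels the $\s^{d(1-1/p)}$ coming from rescaling $\check\mu$. This gives $\Vert\D_\zeta^\a P_\s\Vert_p\lesssim\Vert D_\zeta^\a P_\s\Vert_p$, and running the same scheme with the symbol $1/\Phi_\a$ (equally smooth on the relevant region) gives the reverse inequality, both with constants depending only on $p$, $\a$, and $d$. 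The Bernstein inequality guarantees $D_\zeta^\a P_\s\in L_p$ so that the convolutions are legitimate.

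The main obstacle is the range $0<p<1$: Young's convolution inequality fails there, so the multiplier boundedness cannot be obtained by a direct transference and must rely on the special convolution inequality for functions of exponential type, with the scaling chosen precisely to produce a bound that is uniform in $\s$ and $\zeta$. The analyticity and non-vanishing of $\Phi_\a$ on a neighborhood of $[-1,1]$, which is exactly what the constraint $|\zeta|\le 1/\s$ secures, is what makes both $m$ and $1/m$ admissible symbols simultaneously.
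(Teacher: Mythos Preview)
Your argument is correct. The paper does not give a self-contained proof of this lemma: it simply cites \cite{Kud14}, which proceeds via pointwise maximal-function estimates, and \cite[Theorem~3.1]{KT19m}, which treats the trigonometric-polynomial analogue. Your Fourier-multiplier route---reducing the question to the smooth nonvanishing one-variable symbol $\Phi_\a(s)=\big((e^{is}-1)/(is)\big)^\a$ on a fixed neighbourhood of $[-1,1]$, cutting off and rescaling, and then invoking the $L_p$ convolution-algebra bound for band-limited functions when $0<p<1$---is the natural way to transplant the trigonometric argument to $\R^d$, and it is precisely the machinery the paper itself deploys in the proof of Property~\ref{pr4} (compare the treatment of~\eqref{Run2}--\eqref{Run3} and the reference to \cite[p.~220]{SS}). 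So your approach is essentially the second of the two the paper points to, made explicit.

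One small remark: when $0<p<1$ and $0<\a\le 1/p-1$ the series~\eqref{def-mod++} need not converge in $L_p$, so invoking~\eqref{binom} is not quite the right justification there. This is harmless for your proof, since for band-limited $P_\s$ the Fourier-multiplier definition of $\Delta_\zeta^\a P_\s$ that you actually use makes sense for all $\a>0$ (the series does converge in $L_\infty$, as $P_\s\in L_\infty$ by Nikol'skii's inequality and $\sum_\nu|\binom{\a}{\nu}|<\infty$), and the convolution-algebra step never appeals to term-by-term convergence.
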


\begin{proof}
Inequalities~\eqref{ineqNS3} are proved in~\cite{Kud14} using certain estimates for maximal functions. Alternatively, \eqref{ineqNS3} can be shown repeating step-by-step the proof of the corresponding result for trigonometric polynomials given in~\cite[Theorem~3.1]{KT19m}.
\end{proof}

For periodic functions  $f\in L_p(\T)$, $1\le p \le\infty$, equivalence~\eqref{ineqNS3} is the known result by Nikol'skii~\cite{nikoL}, Stechkin~\cite{stechkiN}, and Boas~\cite{Boas}. Detailed historical observations can be found in~\cite[Section~3]{KT19m}.



\begin{remark}\label{rem2}
\emph{(i)} Under conditions of Lemma~\ref{lemNSB}, we have for any $P_\s\in \mathcal{B}_{\s,p}$ and $\d\in (0,\s]$ that
\begin{equation}\label{ineqNS3+++}
  \w_\a (P_\s,\d)_{p}\asymp
\sup_{\zeta\in \R^d,\,|\zeta|=1}\big\Vert
\Delta_{\zeta\delta}^\a
P_\s
\big\Vert_{p}.
\end{equation}
Indeed, applying equivalence~\eqref{ineqNS3} twice, we have
\begin{equation*}
  \begin{split}
      \w_\a (P_\s,\d)_{p}&=\sup_{0<h\le \d}\sup_{|\zeta|=1}\big\Vert
\Delta_{\zeta h}^\a P_\s \big\Vert_{p}
\lesssim \sup_{0<h\le \d}\sup_{|\zeta|=1}\big\Vert
D_{\zeta h}^\a P_\s \big\Vert_{p}\\
&= \sup_{0<h\le \d}h^\a\sup_{|\zeta|=1}\big\Vert D_{\zeta}^\a P_\s \big\Vert_{p}
= \sup_{|\zeta|=1}\big\Vert
D_{\zeta \d}^\a P_\s \big\Vert_{p}\lesssim \sup_{|\zeta|=1}\big\Vert
\Delta_{\zeta\delta}^\a P_\s \big\Vert_{p}.
   \end{split}
\end{equation*}
The inverse estimate is clear.

\emph{(ii)} Note that in the  one-dimensional case, the above equivalences~\eqref{ineqNS3} and~\eqref{ineqNS3+++}  have the following form:
$$
\frac{\w_\a (P_\s,\d)_{L_p(\R)}}{\d^\a}\asymp \frac{\Vert \D_h^\a P_\s\Vert_{L_p(\R)}}{h^\a}\asymp \Vert P_\s^{(\a)} \Vert_{L_p(\R)}
$$
for any $\d,h\in (0,1/\s]$.
\end{remark}


Using Lemma~\ref{lemNSB} and Remark~\ref{rem2}, we obtain the following result.

\begin{corollary}\label{corNSB}
  Let $0<p\le\infty$, $\a, \s>0$, and
$0<\d\le 1/ \s$. Then, for any
$P_\s\in\mathcal{B}_{\s,p}$, we have
\begin{equation}\label{ineqNS3cor}
\sup_{\zeta\in \R^d,\,|\zeta|=1}\Vert D_{\zeta}^{\a} P_\s\Vert_{p}
\asymp \d^{-\a}\w_\a (P_\s,\d)_{p}.
\end{equation}
In particular, this implies
$$
\s^\a \w_\a(P_\s,1/\s)_p\asymp {\d^{-\a}}{\w_\a(P_\s,\d)_p}.
$$
\end{corollary}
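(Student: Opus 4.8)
The plan is to read off both equivalences directly from Lemma~\ref{lemNSB} together with the positive-homogeneity of the directional derivative, essentially by isolating the intermediate step already carried out in Remark~\ref{rem2}(i).

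First I would establish \eqref{ineqNS3cor}. Starting from the definition
\[
\w_\a(P_\s,\d)_p=\sup_{0<h\le\d}\sup_{|\zeta|=1}\Vert\Delta_{\zeta h}^\a P_\s\Vert_p,
\]
I note that for every $h$ with $0<h\le\d\le 1/\s$ and every unit vector $\zeta$ the vector $\zeta h$ satisfies $0<|\zeta h|=h\le 1/\s$, so Lemma~\ref{lemNSB} applies and yields $\Vert\Delta_{\zeta h}^\a P_\s\Vert_p\asymp\Vert D_{\zeta h}^\a P_\s\Vert_p$ with constants independent of $h$ and $\zeta$. Next I would use the homogeneity $D_{\zeta h}^\a P_\s=h^\a D_\zeta^\a P_\s$, which follows at once from the Fourier-side identity $(i\xi,\zeta h)^\a=h^\a(i\xi,\zeta)^\a$ (valid for the real scalar $h>0$) in the definition of $D_\zeta^\a$. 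Taking the supremum over $0<h\le\d$ then turns the factor $h^\a$ into $\d^\a$, so that
\[
\w_\a(P_\s,\d)_p\asymp\d^\a\sup_{|\zeta|=1}\Vert D_\zeta^\a P_\s\Vert_p,
\]
which is precisely \eqref{ineqNS3cor} after dividing by $\d^\a$.

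For the ``in particular'' claim I would simply invoke \eqref{ineqNS3cor} at two scales. Both values $\d$ and $1/\s$ lie in $(0,1/\s]$, so \eqref{ineqNS3cor} applies to each and gives
\[
\sup_{|\zeta|=1}\Vert D_\zeta^\a P_\s\Vert_p\asymp\d^{-\a}\w_\a(P_\s,\d)_p
\qquad\text{and}\qquad
\sup_{|\zeta|=1}\Vert D_\zeta^\a P_\s\Vert_p\asymp\s^{\a}\w_\a(P_\s,1/\s)_p.
\]
Since the left-hand sides coincide and do not depend on $\d$, equating the two right-hand sides yields $\s^\a\w_\a(P_\s,1/\s)_p\asymp\d^{-\a}\w_\a(P_\s,\d)_p$, as desired.

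There is no genuine analytic obstacle here: the statement is a direct consequence of the already-proved Nikol'skii--Stechkin--Boas equivalence of Lemma~\ref{lemNSB}. The only points requiring care are bookkeeping ones, namely checking that the scale restriction $h\le 1/\s$ is respected throughout the supremum so that Lemma~\ref{lemNSB} is applicable uniformly, and correctly exploiting the degree-$\a$ positive-homogeneity of the directional derivative to pull the factor $h^\a$ (respectively $\d^\a$) out of the norm.
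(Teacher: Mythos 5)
Your argument is correct and is essentially the paper's own proof: the paper also obtains \eqref{ineqNS3cor} by combining Lemma~\ref{lemNSB} with the homogeneity $D_{\zeta h}^{\a}P_\s=h^{\a}D_{\zeta}^{\a}P_\s$ and taking the supremum over $0<h\le\d$ (this is exactly the computation recorded in Remark~\ref{rem2}(i)), and the ``in particular'' claim is likewise read off by applying the equivalence at the two scales $\d$ and $1/\s$. No gaps.
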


Further, using \eqref{ineqNS3cor} and~\eqref{binom}, we obtain the following Bernstein type inequality for fractional directional derivatives of entire functions.

\begin{corollary}\label{corNSBBEr}
  Let $0<p\le\infty$, $\a \in \N \cup ((1/p-1)_+,\infty)$, and $\s>0$. Then, for any
$P_\s\in\mathcal{B}_{\s,p}$, we have
\begin{equation}\label{ineqNS3corBEr}
\sup_{\zeta\in \R^d,\,|\zeta|=1}\Vert D_{\zeta}^{\a} P_\s \Vert_{p}
\lesssim \s^\a \Vert P_\s \Vert_{p}.
\end{equation}
\end{corollary}
Inequality~\eqref{ineqNS3corBEr} is the classical Bernstein inequality for $d=1$ (see, e.g., \cite[Ch.~4]{devore}).
For the fractional $\a$ and $d=1$ see~\cite{Liz} and \cite{BL}. For $1\le p\le\infty$, $\a>0$, and $d\in \N$, inequality~\eqref{ineqNS3corBEr} can be obtained from \cite[Theorem 3]{Wil}.



For moduli of smoothness of integer order, we have the following
Nikol'skii--Stechkin--Boas result.

\begin{corollary}\label{eq++}
  Let $0< p\le\infty$, $r\in \N$, $\s>0$, and $0<\d\le 1/ \s$. Then, for any $P_\s\in \mathcal{B}_{\s,p}$, we have
\begin{equation*}
 \Vert P_\s\Vert_{\dot W_p^r}\asymp \d^{-r}\w_r (P_\s,\d)_{p}.
\end{equation*}
\end{corollary}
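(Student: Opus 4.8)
**The plan is to deduce this from Corollary~\ref{corNSB} by relating the directional derivatives appearing there to the homogeneous Sobolev norm.** The statement concerns integer $r\in\N$, where $D_\zeta^r P_\s$ is a genuine directional derivative of the entire function $P_\s$, namely $D_\zeta^r P_\s(x) = \sum_{|\nu|_{\ell_1}=r}\binom{r}{\nu}\zeta^\nu D^\nu P_\s(x)$, a finite linear combination of the partial derivatives $D^\nu P_\s$ with $|\nu|_{\ell_1}=r$. By Corollary~\ref{corNSB} applied with $\a=r$, we already have
\[
\sup_{|\zeta|=1,\,\zeta\in\R^d}\Vert D_\zeta^r P_\s\Vert_p \asymp \d^{-r}\w_r(P_\s,\d)_p
\]
for all $0<\d\le 1/\s$. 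Therefore it suffices to prove the two-sided bound
\[
\sup_{|\zeta|=1,\,\zeta\in\R^d}\Vert D_\zeta^r P_\s\Vert_p \asymp \Vert P_\s\Vert_{\dot W_p^r},
\]
with constants depending only on $p$, $r$, and $d$, and then chain the two equivalences.

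First I would establish the upper bound $\sup_{|\zeta|=1}\Vert D_\zeta^r P_\s\Vert_p\lesssim \Vert P_\s\Vert_{\dot W_p^r}$. This is the elementary direction: expanding $D_\zeta^r P_\s$ via the multinomial formula and using the quasi-triangle inequality for $\Vert\cdot\Vert_p$ (with the factor $2^{(1/p-1)_+}$ absorbed into the implicit constant, since the number of terms depends only on $r$ and $d$), together with $|\zeta^\nu|\le 1$ for $|\zeta|=1$, immediately gives $\Vert D_\zeta^r P_\s\Vert_p\lesssim \sum_{|\nu|_{\ell_1}=r}\Vert D^\nu P_\s\Vert_p=\Vert P_\s\Vert_{\dot W_p^r}$, uniformly in $\zeta$.

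The reverse bound $\Vert P_\s\Vert_{\dot W_p^r}\lesssim \sup_{|\zeta|=1}\Vert D_\zeta^r P_\s\Vert_p$ is the main obstacle, since one must recover each individual partial derivative $D^\nu P_\s$ (with $|\nu|_{\ell_1}=r$) from the collection of directional derivatives. The plan is to select finitely many fixed unit vectors $\zeta^{(1)},\dots,\zeta^{(M)}\in\R^d$ so that the corresponding directional derivatives of order $r$ span all monomials of degree $r$ in the partial-derivative operators; concretely, the map sending the vector of coefficients $(D^\nu P_\s)_{|\nu|_{\ell_1}=r}$ to the vector $(D_{\zeta^{(j)}}^r P_\s)_{j=1}^M$ is given by a fixed Vandermonde-type matrix depending only on the chosen directions, and one chooses the directions so this matrix has a left inverse. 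Each $D^\nu P_\s$ is then a fixed finite linear combination of the $D_{\zeta^{(j)}}^r P_\s$, whence $\Vert D^\nu P_\s\Vert_p\lesssim \max_j\Vert D_{\zeta^{(j)}}^r P_\s\Vert_p\le \sup_{|\zeta|=1}\Vert D_\zeta^r P_\s\Vert_p$, again with the quasi-triangle constant folded into $\lesssim$. Summing over the finitely many $\nu$ yields $\Vert P_\s\Vert_{\dot W_p^r}\lesssim \sup_{|\zeta|=1}\Vert D_\zeta^r P_\s\Vert_p$.

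The one subtlety to verify is the existence of such a spanning set of directions: the monomials $\{\zeta^\nu : |\nu|_{\ell_1}=r\}$ are linearly independent as functions of $\zeta$, so their common zero set is a proper algebraic variety, and one can pick $M=\binom{r+d-1}{d-1}$ directions on the unit sphere at which the associated matrix is invertible (for instance, by a standard genericity argument, or explicitly for small $d$). Since $r$ and $d$ are fixed, the resulting constants are admissible. Combining the two displayed equivalences then gives $\Vert P_\s\Vert_{\dot W_p^r}\asymp \d^{-r}\w_r(P_\s,\d)_p$, which is the claim.
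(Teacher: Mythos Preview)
Your argument is correct. You reduce to showing the equivalence $\sup_{|\zeta|=1}\Vert D_\zeta^r P_\s\Vert_p \asymp \Vert P_\s\Vert_{\dot W_p^r}$ directly, via the multinomial expansion in one direction and an explicit Vandermonde-type inversion (choosing $M=\binom{r+d-1}{d-1}$ generic unit directions so that the evaluation matrix for the homogeneous monomials of degree $r$ is invertible) in the other. The linear-independence step is fine: a homogeneous polynomial of degree $r$ that vanishes on the unit sphere vanishes identically, so the monomials $\{\zeta^\nu:|\nu|_{\ell_1}=r\}$ remain linearly independent when restricted to $S^{d-1}$, and a generic choice of $M$ points gives an invertible matrix.

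The paper takes a different route. Instead of proving $\sup_{|\zeta|=1}\Vert D_\zeta^r P_\s\Vert_p \asymp \Vert P_\s\Vert_{\dot W_p^r}$ first, it passes through the mixed moduli of smoothness: by equivalence~\eqref{mix1} (proved via Kemperman's formula, valid for all $0<p\le\infty$), $\w_r(P_\s,\d)_p\asymp\sum_{k_1+\dots+k_d=r}\w_{k_1,\dots,k_d}(P_\s,\d)_p$, and then Lemma~\ref{lemNSB} is applied iteratively in each coordinate to convert each mixed difference into the corresponding partial derivative $D^\nu P_\s$. In fact, in the paper the equivalence you prove directly is stated as~\eqref{eqRAZZZ+} and is \emph{derived from} Corollary~\ref{eq++}, so you have reversed the logical order. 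Your approach is more elementary and self-contained (it avoids Kemperman's identity and Property~\ref{mix} altogether); the paper's approach has the advantage of staying entirely on the modulus-of-smoothness side before invoking the derivative equivalence, and it yields the mixed-modulus characterisation~\eqref{mix1} as an ingredient rather than bypassing it.
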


\begin{proof}
The proof is the same as the proof of Theorem~3.2 in~\cite{KT19m}. It is based on Lemma~\ref{lemNSB}, Corollary~\ref{corNSB}, and
equivalence~\eqref{mix1}.
\end{proof}

We will also need the following equivalence between the fractional Laplacian and the directional derivatives.
\begin{corollary}\label{lemRAZZZ}
  Let $1<p<\infty$ and $\a>0$. Then, for any $P\in \cup_{\s>0}\mathcal{B}_{\s,p}$, we have
\begin{equation*}
  \sup_{\zeta\in \R^d,\,|\zeta|=1}\Vert D_{\zeta}^{\a} P\Vert_{p}\asymp \Vert (-\D)^{\a/2}P\Vert_{p}.
\end{equation*}
Moreover, if $0< p\le \infty$ and $r\in \N$, then
\begin{equation}\label{eqRAZZZ+}
  \sup_{\zeta\in \R^d,\,|\zeta|=1}\Vert D_\zeta^{r} P \Vert_{p}\asymp \Vert P\Vert_{\dot W_p^r}.
\end{equation}
\end{corollary}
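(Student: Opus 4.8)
The goal is to prove Corollary~\ref{lemRAZZZ}, which asserts two equivalences for entire functions of exponential type: the equivalence of the supremum over directional derivatives $\sup_{|\zeta|=1}\Vert D_\zeta^\a P\Vert_p$ with the fractional Laplacian norm $\Vert(-\Delta)^{\a/2}P\Vert_p$ for $1<p<\infty$ and $\a>0$, and the analogous equivalence~\eqref{eqRAZZZ+} with the homogeneous Sobolev norm $\Vert P\Vert_{\dot W_p^r}$ for $0<p\le\infty$ and integer $r$.

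\textbf{Plan for~\eqref{eqRAZZZ+} (integer case, all $p$).} The plan is to combine Corollary~\ref{eq++} with Corollary~\ref{corNSB}. By Corollary~\ref{corNSB} applied with $\a=r$, for any $P\in\mathcal{B}_{\s,p}$ and $0<\d\le 1/\s$ we have $\sup_{|\zeta|=1}\Vert D_\zeta^r P\Vert_p\asymp \d^{-r}\w_r(P,\d)_p$. On the other hand, Corollary~\ref{eq++} gives $\Vert P\Vert_{\dot W_p^r}\asymp \d^{-r}\w_r(P,\d)_p$ for the same range of $\d$. Fixing, say, $\d=1/\s$ and chaining these two equivalences immediately yields $\sup_{|\zeta|=1}\Vert D_\zeta^r P\Vert_p\asymp \Vert P\Vert_{\dot W_p^r}$, with constants depending only on $p$, $r$, and $d$ (hence independent of $\s$). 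Since every $P\in\cup_{\s>0}\mathcal{B}_{\s,p}$ lies in some $\mathcal{B}_{\s,p}$, this proves~\eqref{eqRAZZZ+} in full generality.

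\textbf{Plan for the fractional case ($1<p<\infty$).} Here the same strategy applies using the fractional counterparts. By Corollary~\ref{corNSB}, $\sup_{|\zeta|=1}\Vert D_\zeta^\a P\Vert_p\asymp\d^{-\a}\w_\a(P,\d)_p$ for $0<\d\le 1/\s$. To obtain a matching estimate for the fractional Laplacian, I would invoke the equivalence~\eqref{kwil} between the modulus of smoothness and the Riesz-potential $K$-functional, valid precisely for $1<p<\infty$ and $\a>0$, restricted to the entire function $P$. Concretely, since $P\in\mathcal{B}_{\s,p}$ is itself a smooth admissible competitor in $K(P,\d;L_p,\dot H_p^\a)$, the upper bound gives $\w_\a(P,\d)_p\lesssim \d^\a\Vert(-\Delta)^{\a/2}P\Vert_p$; for the reverse direction, the realization-type argument identifies the $K$-functional value at scale $\d=1/\s$ with $\w_\a(P,1/\s)_p$, and a Bernstein-type inequality (Corollary~\ref{corNSBBEr} together with the first, already-proven half of this very corollary identifying $\Vert(-\Delta)^{\a/2}P\Vert_p$ with the directional supremum) closes the loop. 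Thus $\d^{-\a}\w_\a(P,\d)_p\asymp\Vert(-\Delta)^{\a/2}P\Vert_p$, and comparing with the directional-derivative equivalence yields the claim.

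\textbf{Main obstacle.} The integer case is essentially immediate from the two corollaries and requires only careful bookkeeping of the scale $\d=1/\s$. The delicate point is the fractional case: one must ensure that the equivalence~\eqref{kwil}, stated for general $f\in L_p(\R^d)$, can be specialized to an entire function so that both the Bernstein inequality and the Nikol'skii--Stechkin--Boas equivalence apply at the single scale $\d=1/\s$ without circularity. I expect the cleanest route is to prove the fractional equivalence directly via the multiplier structure: on $\mathcal{B}_{\s,p}$ both $(-\Delta)^{\a/2}$ and $\sup_{|\zeta|=1}D_\zeta^\a$ act as Fourier multipliers supported in $B_\s$, and by rescaling $\xi\mapsto \s\xi$ one reduces to a fixed multiplier on the unit ball whose $L_p$-boundedness (for $1<p<\infty$) is uniform in $\s$; the main work is verifying the two-sided multiplier bound, i.e.\ that the ratio of the two symbols and its reciprocal are Fourier multipliers of $L_p$ on functions with spectrum in the unit ball. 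This Marcinkiewicz/Hörmander-type multiplier verification is the genuine analytic core, whereas the reduction to it is routine scaling.
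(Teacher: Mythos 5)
Your treatment of the integer case \eqref{eqRAZZZ+} is correct and is exactly the paper's argument: chain Corollary~\ref{corNSB} with Corollary~\ref{eq++} at the scale $\d=1/\s$.

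The fractional case has a genuine gap. The paper's proof reduces everything, via \eqref{ineqNS3cor}, to the single equivalence $\d^{-\a}\w_\a(P,\d)_p\asymp \Vert(-\D)^{\a/2}P\Vert_p$ for $P\in\mathcal{B}_{1/\d,p}$, which it imports wholesale from Wilmes \cite{Wil}. You try instead to manufacture this equivalence on the spot, and the two routes you offer do not work as written. In the first route, the upper bound $\w_\a(P,\d)_p\lesssim\d^\a\Vert(-\D)^{\a/2}P\Vert_p$ is indeed immediate from \eqref{kwil} with $g=P$, but for the lower bound you invoke ``Corollary~\ref{corNSBBEr} together with the first, already-proven half of this very corollary identifying $\Vert(-\D)^{\a/2}P\Vert_p$ with the directional supremum'' --- that is literally the statement you are trying to prove, so the argument is circular; moreover Corollary~\ref{corNSBBEr} is a Bernstein inequality for \emph{directional} derivatives, whereas the realization argument you are gesturing at needs a Bernstein inequality for the \emph{fractional Laplacian}, $\Vert(-\D)^{\a/2}R\Vert_p\lesssim\s^\a\Vert R\Vert_p$ for $R\in\mathcal{B}_{\s,p}$ (a Mikhlin multiplier estimate for $|\xi|^\a$ times a cutoff, valid for $1<p<\infty$), combined with a de la Vall\'ee Poussin reproduction of $P$. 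With that substitution the realization argument can be made to close non-circularly, but you have not supplied it.

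Your fallback ``direct multiplier'' plan also understates the difficulty. For non-integer $\a$ the symbol $(i(\xi,\zeta))^\a/|\xi|^\a$ is homogeneous of degree zero but its derivatives blow up like $|(\xi,\zeta)|^{\a-k}$ across the hyperplane $(\xi,\zeta)=0$, so the H\"ormander--Mikhlin condition fails there and the verification is not the ``routine'' check you describe. Worse, the direction $\Vert(-\D)^{\a/2}P\Vert_p\lesssim\sup_{|\zeta|=1}\Vert D_\zeta^\a P\Vert_p$ is not a single-multiplier statement at all, since the right-hand side is a supremum over a family of operators; one needs something like the spherical average $|\xi|^\a=c_{\a,d}\int_{S^{d-1}}|(\xi,\zeta)|^\a\,d\zeta$ together with the identity expressing $|t|^\a$ through $(it)^\a$ and $(-it)^\a$ (which degenerates when $\a$ is an odd integer). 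This is precisely the analytic content of the Wilmes result the paper cites, and it is the part your proposal leaves undone.
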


\begin{proof}
The first part follows from~\eqref{ineqNS3cor} and the equivalence $\d^{-\a}\w_\a(P,\d)_p\asymp \Vert (-\D)^{\a/2} P\Vert_p$ for $P\in \mathcal{B}_{1/\d,p}$, $1/\d=\s$ (see~\cite{Wil}).
Equivalence~(\ref{eqRAZZZ+}) follows from Corollaries~\ref{corNSB} and~\ref{eq++}.
\end{proof}

Note that for $1\le p\le \infty$ equivalence \eqref{eqRAZZZ+} can be extended to any $f\in W_p^r(\R^d)$,  $r\in \N$. Namely we have 
$$
 \Vert f\Vert_{\dot W_p^r}\asymp \sup_{\zeta\in \R^d,\,|\zeta|=1}\Vert D_{\zeta}^{r} f \Vert_{p}.
$$
%
%

\subsection{Hardy--Littlewood--Nikol'skii inequalities for entire functions}\label{hardy-inequality-section}\label{sec5}

%


For applications, in particular, to obtain the sharp Ulyanov inequality for moduli of smoothness (Property~\ref{pr7}), it is important to derive the  Hardy-Littlewood-Nikol'skii inequalities for directional derivatives. 
As a simple example of such inequalities, we mention the following estimate:
$$
{\sup\limits_{|\zeta|=1,\,\zeta\in\R^d}\Vert D_{\zeta}^{\a} P_\s\Vert_{q}}\lesssim \eta(\s) {\sup\limits_{|\zeta|=1,\,\zeta\in\R^d}\Vert D_{\zeta}^{\a+\g} P_\s\Vert_{p}}, \quad P_\s\in \mathcal{B}_{\s,p}.
$$
For $\eta(\s)=1$, $\g=d(1/p-1/q)$, and $1<p<q<\infty$, this corresponds to the Hardy-Littlewood inequality for fractional integrals. For $\eta(\s)=\s^{d(\frac1p-\frac1q)}$, $\g=0$, and $0<p<q\le \infty$, this coincides with the Nikolskii inequality (see~\cite{diti})
\begin{equation}\label{eqNIKNIK}
  \Vert P_\s \Vert_q\lesssim \s^{d(\frac1p-\frac1q)} \Vert P_\s \Vert_p,\quad P_\s\in \mathcal{B}_{\s,p}.
\end{equation}

In this subsection, we obtain similar inequalities for the previously unknown cases $0<p\le 1$ and $q=\infty$.

Let us recall the definition of the Besov space $B_{p,q}^s(\R^d)$ (see, e.g.,~\cite{TribF}). We  consider the Schwartz function
$\vp\in \mathscr{S}(\R^d)$ such that $\supp\vp\subset
\{\xi\in\R^d\,:\,1/2\le |\xi|\le 2\}$, $\vp(\xi)>0$ for $1/2< |\xi|<2$, $\vp(\xi)=1$ for $5/4< |\xi|<7/4$, and
\begin{equation*}
    \sum_{k=-\infty}^\infty \vp(2^{-k}\xi)=1\quad\text{if}\quad \xi\neq0.
\end{equation*}
We also introduce the functions $\vp_k$ and $\psi$ by means of the relations
$$
\mathcal{F}\vp_k(\xi)=\vp(2^{-k}\xi)
\quad
\text{and}
\quad
\mathcal{F}\psi(\xi)=1-\sum_{k=1}^\infty \vp(2^{-k}\xi).
$$

We  say that $f\in\mathscr{S}'(\R^d)$\label{SS}
belongs to the (non-homogeneous)  Besov space $B_{p,q}^s(\R^d)$, $s\in\R$, $0<p,q\le\infty$, if
$$
\Vert f\Vert_{B_{p,q}^s(\R^d)}=\Vert
\psi*f\Vert_{p}+\bigg(\sum_{k=1}^\infty 2^{sqk} \Vert
\vp_k* f\Vert_{p}^q\bigg)^{1/q}<\infty
$$\index{\bigskip\textbf{Spaces}!$B_{p,q}^s(\R^d)$}\label{BSPQ}
(with the usual modification in the case $q=\infty$).

\begin{lemma}\label{lemma+}
Let $0<p\le 1$, $1<q<\infty$, $\a>0$, $\g=d(1-1/q)$, $\s\ge 1$, and $\a+\g \neq 2k+1$, $k\in \Z_+$. Then, for any $P_\s\in\mathcal{B}_{\s,p}$, we have
\begin{equation*}
 {\sup\limits_{|\zeta|=1,\,\zeta\in\R^d}\Vert D_{\zeta}^{\a} P_\s\Vert_{q}}
\lesssim \s^{d(\frac1p-1)}\ln^\frac1{q} (\s+1) \sup\limits_{|\zeta|=1,\,\zeta\in\R^d}\Vert D_\zeta^{\a+\g} P_\s \Vert_p +\Vert P_\s\Vert_q.
\end{equation*}
\end{lemma}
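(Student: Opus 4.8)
The plan is to pass from the directional derivative to the Riesz potential $I^\g=(-\D)^{-\g/2}$ and then to exploit the band-limited structure scale by scale. First I would record the pointwise identity on the Fourier side: writing $\xi=|\xi|\omega$ with $\omega\in S^{d-1}$, one seeks a (signed) measure $\lambda_\zeta$ on $S^{d-1}$ with
\[
(i\omega,\zeta)^{\a}=\int_{S^{d-1}}(i\omega,\eta)^{\a+\g}\,d\lambda_\zeta(\eta),\qquad \omega\in S^{d-1},
\]
so that, after multiplying by $|\xi|^{\a}$ and inserting $|\xi|^{-\g}$, one gets $\widehat{D_\zeta^\a P_\s}(\xi)=|\xi|^{-\g}\int_{S^{d-1}}\widehat{D_\eta^{\a+\g}P_\s}(\xi)\,d\lambda_\zeta(\eta)$, i.e. $D_\zeta^\a P_\s=I^\g w$ with $w:=\int_{S^{d-1}}D_\eta^{\a+\g}P_\s\,d\lambda_\zeta(\eta)$. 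The right-hand side of the displayed identity is a zonal (Funk--Hecke) convolution on the sphere, so solvability for $\lambda_\zeta$ reduces to a one-dimensional Gegenbauer computation; this is precisely the step that forces the arithmetic restriction $\a+\g\neq 2k+1$, the excluded values being those for which the relevant spherical eigenvalues degenerate. Since $\Vert w\Vert_p\lesssim\sup_{|\eta|=1}\Vert D_\eta^{\a+\g}P_\s\Vert_p=:B$ (for $0<p<1$ one first discretizes $\lambda_\zeta$ into finitely many directions and uses the $p$-subadditivity of $\Vert\cdot\Vert_p^p$), the lemma reduces to a Hardy--Littlewood--Sobolev-type estimate for band-limited $w$.

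Next I would split $w$ into Littlewood--Paley blocks $w_j=\vp_j*w$, $1\le j\le N$ with $N\asymp\log_2\s$, the lowest frequencies being absorbed into $\Vert P_\s\Vert_q$ because there $I^\g$ and the block projections act boundedly on $L_q$, $q>1$. On the annulus $|\xi|\sim 2^j$ the symbol $|\xi|^{-\g}$ is $\asymp 2^{-j\g}$ times a harmless smooth factor, so the multiplier theorem for band-limited $L_p$-functions gives $\Vert I^\g w_j\Vert_p\lesssim 2^{-j\g}\Vert w_j\Vert_p\lesssim 2^{-j\g}B$. The Nikol'skii inequality~\eqref{eqNIKNIK} then upgrades the metric on each block, and using $\g=d(1-1/q)$ we obtain
\[
\Vert I^\g w_j\Vert_q\lesssim 2^{jd(\frac1p-\frac1q)}\Vert I^\g w_j\Vert_p\lesssim 2^{jd(\frac1p-1)}B.
\]
When $0<p<1$ the exponent $d(1/p-1)$ is strictly positive, so the geometric series $\sum_{j\le N}2^{jd(1/p-1)}$ is dominated by its top term $\asymp\s^{d(1/p-1)}$ and the ordinary triangle inequality in $L_q$ already yields the bound $\s^{d(1/p-1)}B$, with no logarithm at all; hence for $p<1$ the stated estimate holds a fortiori, and the logarithmic factor is genuinely needed only at $p=1$.

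The real difficulty is the endpoint $p=1$, where $\g=d(1-1/q)=d(1/p-1/q)$ is exactly the critical exponent for which $I^\g\colon L_1\to L_q$ fails and only the weak-type bound $\Vert I^\g w\Vert_{L_{q,\infty}}\lesssim\Vert w\Vert_1\lesssim B$ survives. Now every block satisfies $\Vert I^\g w_j\Vert_q\lesssim B$, so the naive triangle inequality loses a full factor $\ln\s$ and the Littlewood--Paley square function loses $\ln^{1/\min(q,2)}\s$, which is too large for $q>2$. The main obstacle is thus to recover the sharp power $\ln^{1/q}\s$ uniformly in $1<q<\infty$; the route I would take is to upgrade the weak-type bound to a strong one by integrating the distribution function of $f:=D_\zeta^\a P_\s$. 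Since $f$ is band-limited, the Bernstein and Nikol'skii inequalities control $\Vert f\Vert_1$ and $\Vert f\Vert_\infty$ by fixed powers of $\s$ times $\Vert P_\s\Vert_p$, so the relevant levels $\lambda$ range over an interval of polynomial length in $\s$, and
\[
\Vert f\Vert_q^q=q\int_0^\infty\lambda^{q-1}\,|\{|f|>\lambda\}|\,d\lambda\lesssim \Vert f\Vert_{L_{q,\infty}}^q\,\ln(\s+1)+\Vert P_\s\Vert_q^q
\]
after cutting off the very small and very large levels (which are absorbed into $\Vert P_\s\Vert_q$). Taking $q$-th roots produces the factor $\ln^{1/q}(\s+1)$, and a Fej\'er-type extremal example shows this power is sharp. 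Verifying the band-limited multiplier estimate on each block and making this weak-to-strong upgrade quantitative are the two technical points on which the argument hinges.
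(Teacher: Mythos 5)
Your overall architecture (reduce $D_\zeta^\a$ to an operator of order $\a+\g$, decompose dyadically in frequency, apply Nikol'skii blockwise, treat the low frequencies via $\Vert P_\s\Vert_q$) is parallel to the paper's, but the two steps you yourself flag as ``the technical points on which the argument hinges'' are precisely where the proof lives, and neither is secured. First, the Funk--Hecke deconvolution: you posit a \emph{finite signed measure} $\lambda_\zeta$ on $S^{d-1}$ with $(i\omega,\zeta)^{\a}=\int_{S^{d-1}}(i\omega,\eta)^{\a+\g}\,d\lambda_\zeta(\eta)$, and the bound $\Vert w\Vert_p\lesssim\sup_\eta\Vert D_\eta^{\a+\g}P_\s\Vert_p$ depends entirely on $\lambda_\zeta$ having finite total variation (all the more so for $p\le1$, where even a discretization needs uniformly $\ell_p$-summable coefficients). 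Solving the zonal convolution equation means dividing the Gegenbauer coefficients of $t\mapsto(it)^{\a}$ by those of $t\mapsto(it)^{\a+\g}$; since $\g>0$ the latter decay \emph{faster}, so the quotients grow and $\lambda_\zeta$ is generically a distribution of positive order, not a measure. Your attribution of the hypothesis $\a+\g\neq 2k+1$ to degenerating spherical eigenvalues is also off the mark: in the paper this condition enters to guarantee $\Re\big((i\xi_1)^{\a+\g}+\cdots+(i\xi_d)^{\a+\g}\big)\neq0$ for $\xi\neq0$, which makes $|\xi|^{\a}\vp(\xi)/\big((i\xi_1)^{\a+\g}+\cdots+(i\xi_d)^{\a+\g}\big)$ an $L_1$-Fourier multiplier (via the Liflyand--Samko--Trigub criterion). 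That is the paper's substitute for your spherical identity: it replaces the fractional Laplacian on each annulus by $\mathcal{D}^{\a+\g}=\sum_j(\partial/\partial x_j)^{\a+\g}$, whose $L_p$-norm is trivially dominated by $\sup_\zeta\Vert D_\zeta^{\a+\g}P_\s\Vert_p$ even for $p\le1$.

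Second, the endpoint $p=1$. The paper obtains the sharp power $\ln^{1/q}$ not from a weak-type bound but from the Besov embedding $B_{1,q}^{d(1-1/q)}(\R^d)\subset L_q(\R^d)$, which converts $\Vert\cdot\Vert_q$ into an $\ell_q$-sum over the $\asymp\log\s$ dyadic blocks of $L_1$-norms; after the multiplier step each block contributes the \emph{same} quantity $\Vert\mathcal{D}^{\a+\g}P_\s\Vert_1$, and the $\ell_q$-sum of $n$ equal terms is exactly $n^{1/q}$ times one term. Your proposed replacement --- upgrading $\Vert I^\g w\Vert_{L_{q,\infty}}\lesssim\Vert w\Vert_1$ to a strong bound by integrating the distribution function over a window of levels of polynomial length --- is not carried out: the contribution of levels $\lambda<\lambda_0$ is $\lesssim\lambda_0^{q-1}\Vert f\Vert_1$, and absorbing this into $\Vert P_\s\Vert_q^q$ (or into the main term) requires a choice of $\lambda_0$ and lower/upper norm comparisons for band-limited functions that you do not supply and that are not routine. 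As it stands the proposal establishes the lemma only for $0<p<1$ (where, as you correctly observe, the blockwise Nikol'skii constants form a geometric series and no logarithm is needed), and only modulo the unproved spherical deconvolution.
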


 \begin{proof}
Let $n\in \N$ be such that $2^{n-1}\le \s<2^{n}$. We have
$$
P_\s=P_\s*\psi+\sum_{k=1}^{n+1} P_\s* \vp_k.
$$
By Corollary~\ref{lemRAZZZ} and the embedding $B_{1,q}^{d(1-1/q)}(\R^d)\subset L_q(\R^d)$  (see, e.g.,~\cite[2.7.1]{TribF}),  we derive
\begin{equation}\label{HLNdir}
  \begin{split}
      \sup\limits_{|\zeta|=1,\,\zeta\in\R^d}\Vert D_{\zeta}^{\a} (P_\s-P_\s* \psi)\Vert_{q}&\lesssim \Vert (-\Delta)^{\a/2} (P_\s-P_\s* \psi)\Vert_q\\
&\lesssim \(\sum_{k=1}^{n+1} 2^{d(q-1)k} \Vert (-\Delta)^{\a/2} P_\s*\vp_k\Vert_1^q \)^\frac1q \\
&\lesssim \(\sum_{k=1}^{n+1} 2^{d(q-1)k-\g qk} \Vert \mathcal{D}^{\a+\g} P_\s\Vert_1^q \)^\frac1q,
  \end{split}
\end{equation}
where
$$
\mathcal{D}^{\a+\g} P_\s (x):=\sum_{j=1}^d \(\frac{\partial}{\partial x_j}\)^{\a+\g} P_\s(x)=\mathcal{F}^{-1}\(w(\xi) \widehat{P_\s}(\xi)\)(x),
$$
and $w(\xi):=(i\xi_1)^{\a+\g}+\dots+(i\xi_d)^{\a+\g}$. In the last inequality in~\eqref{HLNdir}, we applied the fact that the function
$$
g(\xi)=\frac{|\xi|^\a \vp(\xi)}{(i\xi_1)^{\a+\g}+\dots+(i\xi_d)^{\a+\g}}=\frac{|\xi|^\a \vp(\xi)}{w(\xi)}
$$
is the Fourier multiplier in $L_1(\R^d)$, i.e., $\Vert \mathcal{F}^{-1}(g \widehat{f})\Vert_1\lesssim \Vert f\Vert_1$ for any $f\in L_1(\R^d)$.
To verify this, one can use Theorem~6.8 in~\cite{LST} stating that if $g\in L_1(\R^d)$ and $D^\nu g \in L_p(\R^d)$ for some $1<p\le 2$ and all $\nu\in \{0,1\}^d$, $\nu\neq 0$, then $g$ is an $L_1$-multiplier. Here we observe that
\begin{multline*}
    w(\xi)=\cos\frac{({\a+\g})\pi}{2}\(|\xi_1|^{\a+\g}+\dots+|\xi_d|^{\a+\g}\)\\
    +i\(\sin\frac{({\a+\g}) \pi \sign \xi_1}{2}|\xi_1|^{\a+\g}+\dots+\sin\frac{({\a+\g}) \pi \sign \xi_d}{2}|\xi_d|^{\a+\g}\),
\end{multline*}
which implies that $\Re w(\xi)\neq 0$ for $\xi\neq 0$ and ${\a+\g} \neq 2k+1$, $k\in \Z_+$.

Next, using Nikolskii's inequality, we derive from~\eqref{HLNdir} that
\begin{equation}\label{HLNrr}
  \begin{split}
     \sup\limits_{|\zeta|=1,\,\zeta\in\R^d}\Vert D_{\zeta}^{\a} (P_\s-P_\s* \psi)\Vert_{q}
     &\lesssim \(\sum_{k=1}^{n+1} \Vert \mathcal{D}^{\a+\g} P_\s\Vert_1^q \)^\frac1q\\
&\asymp n^\frac1q\Vert \mathcal{D}^{\a+\g} P_\s\Vert_1 \\
&\lesssim  2^{d(\frac1p-1)n} n^\frac1q \Vert \mathcal{D}^{\a+\g} P_\s\Vert_p\\
&\lesssim \s^{d(\frac1p-1)}\ln^\frac1{q} (\s+1) \sup\limits_{|\zeta|=1,\,\zeta\in\R^d}\Vert D_\zeta^{\a+\g} P_\s \Vert_p.
  \end{split}
\end{equation}
Finally, taking into account Bernstein's inequality~\eqref{ineqNS3corBEr} and Young's convolution inequality, we get
$$
\sup\limits_{|\zeta|=1,\,\zeta\in\R^d}\Vert D_{\zeta}^{\a} (P_\s* \psi)\Vert_{q} \lesssim \Vert P_\s *\psi\Vert_q\lesssim \Vert P_\s \Vert_q.
$$
This and~\eqref{HLNrr} imply the required estimate.
\end{proof}



\begin{lemma}\label{lemVspom} Let $P\in \mathcal{B}_{\s,1}$, $\s>0$.

{\textnormal {(i)}}  For $1/{q^*}=(d-1)/d$, we have
\begin{equation}\label{eqStein}
  \Vert P\Vert_{q^*}\le \frac1d
  \sum_{j=1}^d \left\Vert \frac{\partial P}{\partial x_j}\right\Vert_1.
\end{equation}

{\textnormal {(ii)}}  We have
\begin{equation}\label{eqVspom}
\Vert P\Vert_\infty \le \Vert P\Vert_{\dot W_1^d}.
\end{equation}
\end{lemma}

\begin{proof}
Inequality~\eqref{eqStein} is given in~\cite[pp.~129--130]{Stein}. To prove~\eqref{eqVspom}, we use the equality
$$
P(x)=\int_{-\infty}^{x_1}\dots\int_{-\infty}^{x_d}\frac{\partial^d}{\partial x_1\dots \partial x_d} P(t_1,\dots,t_d) dt_1\dots dt_d
$$
noting that by Theorem~3.2.5 in~\cite{nikol-book} we have $\lim_{|x|\to \infty} P(x)=0$.
\end{proof}

Now, we obtain the sharp version of the Hardy-Littlewood-Nikolskii inequality in the case $\a+\g\in \N$ and $\g\ge 1$, cf. Lemma~\ref{lemma+}.

\begin{lemma}\label{lemPolSob}
Let $0<p\le 1$, $1<q\le\infty$, $d\ge 2$, $\a>0$, $\g=d(1-1/q)\ge 1$, and
$\a+\g\in \N$. 
Then, for any $P_\s\in\mathcal{B}_{\s,p}$, we have
\begin{equation}\label{eqlemMM2Sob}
  {\sup\limits_{|\zeta|=1,\,\zeta\in\R^d}\Vert D_{\zeta}^{\a} P_\s\Vert_{q}}\lesssim \s^{d(\frac1p-1)} {\sup\limits_{|\zeta|=1,\,\zeta\in\R^d}\Vert D_\zeta^{\a+\g} P_\s\Vert_p}.
\end{equation}
\end{lemma}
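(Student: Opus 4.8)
The plan is to mirror the structure of the proof of Lemma~\ref{lemma+}, replacing the Besov-embedding step by the sharp Sobolev-type estimates collected in Lemma~\ref{lemVspom}, which is exactly what removes the logarithmic loss in the case $\a+\g\in\N$, $\g\ge 1$. Let $n\in\N$ satisfy $2^{n-1}\le\s<2^n$ and write $P_\s=P_\s*\psi+\sum_{k=1}^{n+1}P_\s*\vp_k$. As in Lemma~\ref{lemma+}, the low-frequency piece $P_\s*\psi$ is controlled by Bernstein's inequality~\eqref{ineqNS3corBEr} together with Young's inequality, giving $\sup_{|\zeta|=1}\Vert D_\zeta^\a(P_\s*\psi)\Vert_q\lesssim\Vert P_\s\Vert_q$; but since now we want the clean conclusion~\eqref{eqlemMM2Sob} without a residual $\Vert P_\s\Vert_q$ term, I would instead absorb this piece directly, estimating $\Vert P_\s*\psi\Vert_q\lesssim\Vert P_\s*\psi\Vert_p\lesssim\Vert P_\s\Vert_p$ by Nikolskii's inequality~\eqref{eqNIKNIK} applied on the bounded-type spectrum, and then bounding $\Vert P_\s\Vert_p$ by $\s^{-(\a+\g)}\sup_{|\zeta|=1}\Vert D_\zeta^{\a+\g}P_\s\Vert_p$ via the reverse Bernstein estimate implicit in Corollary~\ref{corNSB}, which is subordinate to the main term since $\s\ge 1$.

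The heart of the argument is the high-frequency part. By Corollary~\ref{lemRAZZZ} I first pass from the directional derivative to the fractional Laplacian, $\sup_{|\zeta|=1}\Vert D_\zeta^\a(P_\s-P_\s*\psi)\Vert_q\lesssim\Vert(-\Delta)^{\a/2}(P_\s-P_\s*\psi)\Vert_q$, valid here since $1<q\le\infty$ and $\a>0$. The key new input is the endpoint Sobolev embedding from Lemma~\ref{lemVspom}: when $q<\infty$ with $1/q^*=(d-1)/d$, inequality~\eqref{eqStein} gives $L_1\to L_{q^*}$ control by first-order derivatives, and iterating it $\g$ times (note $\g=d(1-1/q)$ is the integer producing exactly the exponent $q$ at the top of the scale) converts $L_1$-norms of $\mathcal{D}^{\a+\g}$-type frequency blocks into $L_q$-norms of $\mathcal{D}^\a$-type blocks; in the extreme case $q=\infty$, $\g=d$, one uses~\eqref{eqVspom} directly. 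On each dyadic block $P_\s*\vp_k$ this produces $\Vert(-\Delta)^{\a/2}P_\s*\vp_k\Vert_q\lesssim 2^{\g k}\Vert\mathcal{D}^{\a+\g}P_\s*\vp_k\Vert_1$ without any summation in $k$ and without logarithmic factors, because the Sobolev embedding itself—unlike the Besov embedding used in Lemma~\ref{lemma+}—is sharp and loses no $\ell_q$-summability. Here I again invoke the $L_1$-multiplier fact from Lemma~\ref{lemma+}, namely that $g(\xi)=|\xi|^\a\vp(\xi)/w(\xi)$ with $w(\xi)=\sum_j(i\xi_j)^{\a+\g}$ is an $L_1$-Fourier multiplier, which remains available precisely because $\a+\g\in\N$ forces us only to check $\Re w(\xi)\neq0$, and for $\a+\g\neq 2k+1$ this holds; the case of odd $\a+\g$ must be handled by a rotation/coordinate argument, or by noting that the relevant multiplier is again admissible via Theorem~6.8 in~\cite{LST}.

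After these reductions I sum over the single scale and apply Nikolskii's inequality~\eqref{eqNIKNIK} in the form $\Vert\mathcal{D}^{\a+\g}P_\s\Vert_1\lesssim 2^{d(\frac1p-1)n}\Vert\mathcal{D}^{\a+\g}P_\s\Vert_p\lesssim\s^{d(\frac1p-1)}\sup_{|\zeta|=1}\Vert D_\zeta^{\a+\g}P_\s\Vert_p$, where the last passage from $\mathcal{D}^{\a+\g}$ back to the directional derivative again uses the multiplier $1/w(\xi)$ times the symbol of $D_\zeta^{\a+\g}$ restricted to the spectrum of $P_\s$. Combining the high- and low-frequency estimates yields~\eqref{eqlemMM2Sob}. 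I expect the main obstacle to be the iterated application of~\eqref{eqStein}: one must check that repeatedly applying the $L_1\to L_{d/(d-1)}$ step to derivatives of the frequency-localized pieces indeed accumulates to the exponent $q$ after exactly $\g$ steps and that each intermediate function still carries enough derivatives in $L_1$, which is where the integrality $\a+\g\in\N$ and the lower bound $\g\ge1$ are genuinely used; a cleaner alternative, which I would adopt if the iteration becomes delicate, is to prove the required block estimate $\Vert(-\Delta)^{\a/2}P_\s*\vp_k\Vert_q\lesssim 2^{\g k}\Vert\mathcal{D}^{\a+\g}P_\s*\vp_k\Vert_1$ in one stroke by the endpoint Sobolev inequality $\Vert g\Vert_q\lesssim\Vert g\Vert_{\dot W_1^\g}$ (the $q<\infty$, integer-$\g$ case of the Gagliardo–Nirenberg–Sobolev scale, with the $q=\infty$ endpoint supplied by~\eqref{eqVspom}) applied to $g=(-\Delta)^{\a/2}P_\s*\vp_k$, thereby sidestepping the manual iteration entirely.
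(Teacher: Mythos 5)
Your overall strategy (a Littlewood--Paley decomposition as in Lemma~\ref{lemma+}, with the Besov embedding replaced by endpoint Sobolev estimates) is reasonable in spirit, but two of its steps fail, and the paper's actual proof avoids both problems by not decomposing at all. The paper first uses \eqref{eqRAZZZ+} and Nikolskii's inequality to get $\Vert P_\s\Vert_{\dot W_1^{\a+\g}}\lesssim\s^{d(1/p-1)}\sup_{|\zeta|=1}\Vert D_\zeta^{\a+\g}P_\s\Vert_p$, and then proves the scale-invariant estimate $\Vert(-\D)^{\a/2}P_\s\Vert_q\lesssim\Vert P_\s\Vert_{\dot W_1^{\a+\g}}$ for the whole function: one application of the Hardy--Littlewood inequality for fractional integrals lowers the order by $\g-1=d(1/q^*-1/q)$ with $q^*=d/(d-1)$, and then a single application of \eqref{eqStein} (or of \eqref{eqVspom} when $q=\infty$) finishes; no iteration and no frequency localization are needed. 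Since every norm in this chain is homogeneous of the same degree, there is no low-frequency remainder to absorb.

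There are two genuine gaps in your version. First, your absorption of the low-frequency piece rests on a ``reverse Bernstein'' inequality $\Vert P_\s\Vert_p\lesssim\s^{-(\a+\g)}\sup_{|\zeta|=1}\Vert D_\zeta^{\a+\g}P_\s\Vert_p$, which is false: Corollary~\ref{corNSB} yields only the forward Bernstein bound $\sup_{|\zeta|=1}\Vert D_\zeta^{\a+\g}P_\s\Vert_p\lesssim\s^{\a+\g}\Vert P_\s\Vert_p$, and the reverse cannot hold uniformly over $\mathcal{B}_{\s,p}$ (take $P(x)=Q(\e x)$ with $Q\in\mathcal{B}_{1,p}$ fixed and $\e\to0$; then $P\in\mathcal{B}_{\s,p}$ but $\Vert P\Vert_p/\sup_{|\zeta|=1}\Vert D_\zeta^{\a+\g}P\Vert_p\asymp\e^{-(\a+\g)}\to\infty$). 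Second, your high-frequency analysis routes through the operator $\mathcal{D}^{\a+\g}$ with symbol $w(\xi)=\sum_j(i\xi_j)^{\a+\g}$ and needs $1/w$ (times a cutoff) to be an $L_1$-multiplier; for odd integers $\a+\g=2k+1$ one has $\Re w\equiv0$ and $\Im w$ vanishes on a nontrivial set (e.g.\ $\xi_1=-\xi_2$, remaining coordinates zero), so $w$ has zeros away from the origin and the reduction collapses. The lemma must cover odd $\a+\g$ (e.g.\ $d=2$, $q=\infty$, $\g=2$, $\a=1$), and no rotation argument rescues a symbol that genuinely vanishes; the whole point of the hypothesis $\a+\g\in\N$ is that one can then replace $\mathcal{D}^{\a+\g}$ by the full Sobolev norm $\Vert\cdot\Vert_{\dot W_1^{\a+\g}}$ via \eqref{eqRAZZZ+}, which works for every integer. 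Your closing ``cleaner alternative'' --- the bound $\Vert g\Vert_q\lesssim\Vert g\Vert_{\dot W_1^\g}$ applied to $g=(-\D)^{\a/2}P_\s$ --- is essentially the paper's argument; apply it to the whole of $P_\s$ rather than to dyadic blocks and both gaps disappear.
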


\begin{proof}
By Nikol'skii's inequality, we have
\begin{equation*}
  \Vert P_\s\Vert_{\dot W_1^{\a+\g}}\lesssim \sup_{|\zeta|=1,\,\zeta\in\R^d}\Vert D_\zeta^{\a+\g} P_\s\Vert_1
\lesssim \s^{d(\frac1p-1)} \sup_{|\zeta|=1,\,\zeta\in\R^d}\Vert D_\zeta^{\a+\g} P_\s\Vert_p
\end{equation*}
and, therefore, by Corollary~\ref{lemRAZZZ},
\begin{equation}\label{eqlemMM2Sob+}
  \frac{\sup\limits_{|\zeta|=1,\,\zeta\in\R^d}\Vert D_{\zeta}^{\a} P_\s\Vert_{q}}{\sup\limits_{|\zeta|=1,\,\zeta\in\R^d}\Vert D_\zeta^{\a+\g} P_\s\Vert_p}
  \lesssim
  {\s^{d(\frac1p-1)}}
  \left\{
  \begin{array}{ll}
    \displaystyle\frac{\Vert (-\D)^{\a/2}P_\s \Vert_{q}}{\Vert P_\s \Vert_{\dot W_1^{\a+\g}}}, & \hbox{$1<q<\infty$;} \\
    \displaystyle\frac{\Vert P_\s \Vert_{\dot W_q^\a}}{\Vert P_\s \Vert_{\dot W_1^{\a+\g}}}, & \hbox{$q=\infty$.}
  \end{array}
\right.
\end{equation}

First, we consider the case $1<q<\infty$.  
%
%
Choose $q^*\in (1,q]$ such that
$$
\g-1=d\(\frac1{q^*}-\frac1q\).
$$
Then,  using the Hardy-Littlewood inequality for fractional integrals and  Lemma~\ref{lemVspom}~(i),
we have
\begin{equation*}
  \begin{split}
\Vert (-\D)^{\a/2}P_\s \Vert_{q} &\lesssim \Vert (-\D)^{(\a+\g-1)/2}P_\s \Vert_{q^*}\lesssim \Vert P_\s\Vert_{\dot W_{q^*}^{\a+\g-1}}\\
&\lesssim \Vert P_\s\Vert_{\dot W_{1}^{\a+\g}},
   \end{split}
\end{equation*}
which together with~\eqref{eqlemMM2Sob+} implies~\eqref{eqlemMM2Sob}.


To obtain~\eqref{eqlemMM2Sob} in the case $q=\infty$, it is enough to note that
by~\eqref{eqVspom}, we have
$$
\Vert P_\s\Vert_{\dot W_\infty^\a}\lesssim \Vert P_\s\Vert_{\dot
W_1^{\a+d}},
$$
which completes the proof.


\end{proof}

%
%

\begin{lemma}\label{lemma+inf}
Let $1<p<\infty$, $\a>0$, $\g=d/p$, and $\s\ge 1$. Then, for any $P_\s\in\mathcal{B}_{\s,p}$, we have
\begin{equation}\label{will+inf}
 {\sup\limits_{|\zeta|=1,\,\zeta\in\R^d}\Vert D_{\zeta}^{\a} P_\s \Vert_{\infty}}
\lesssim \ln^\frac 1{p'} (\s+1) \sup\limits_{|\zeta|=1,\,\zeta\in\R^d}\Vert D_\zeta^{\a+\g} P_\s\Vert_p +\Vert P_\s\Vert_p.
\end{equation}
\end{lemma}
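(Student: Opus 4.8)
The plan is to follow the scheme of the proof of Lemma~\ref{lemma+} — split $P_\s$ into a low- and a high-frequency part and treat each separately — but to replace the Besov embedding step (which would produce only the logarithmic factor $\ln^{1}$) by a sharp convolution estimate yielding the correct power $\ln^{1/p'}$. First I would fix $n\in\N$ with $2^{n-1}\le\s<2^{n}$ and write $P_\s=P_\s*\psi+(P_\s-P_\s*\psi)$, where $P_\s-P_\s*\psi=\sum_{k=1}^{n+1}P_\s*\vp_k$ has spectrum in $\{1/2\le|\xi|\le\s\}$. For the low-frequency part $P_\s*\psi$, which is of exponential type $\asymp 2$, Bernstein's inequality~\eqref{ineqNS3corBEr}, Nikol'skii's inequality~\eqref{eqNIKNIK}, and Young's convolution inequality give
\begin{equation*}
\sup_{|\zeta|=1}\Vert D_\zeta^{\a}(P_\s*\psi)\Vert_\infty\lesssim\Vert P_\s*\psi\Vert_\infty\lesssim\Vert P_\s*\psi\Vert_p\lesssim\Vert P_\s\Vert_p,
\end{equation*}
which produces exactly the term $\Vert P_\s\Vert_p$.

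For the high-frequency part I would fix a direction $\zeta$, set $F=D_\zeta^{\a}(P_\s-P_\s*\psi)$, and reduce the whole estimate to the band-limited log-Sobolev inequality
\begin{equation*}
\Vert F\Vert_\infty\lesssim\ln^{1/p'}(\s+1)\,\Vert F\Vert_{\dot H_p^{d/p}},
\end{equation*}
valid, with a constant independent of $\zeta$, for every $F\in L_p$ whose spectrum lies in $\{1/2\le|\xi|\le\s\}$. Granting this, I would bound the right-hand side uniformly in $\zeta$. Since $(-\Delta)^{\g/2}$ commutes with $D_\zeta^{\a}$ and $\g=d/p$, one has $\Vert F\Vert_{\dot H_p^{d/p}}=\Vert D_\zeta^{\a}(-\Delta)^{\g/2}(P_\s-P_\s*\psi)\Vert_p$; applying Corollary~\ref{lemRAZZZ}, together with the fact that $1-\mathcal{F}\psi$ is a Mikhlin multiplier, gives
\begin{equation*}
\Vert F\Vert_{\dot H_p^{d/p}}\lesssim\Vert(-\Delta)^{(\a+\g)/2}P_\s\Vert_p\asymp\sup_{|\zeta|=1}\Vert D_\zeta^{\a+\g}P_\s\Vert_p,
\end{equation*}
the last equivalence again by Corollary~\ref{lemRAZZZ}. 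Taking the supremum over $\zeta$ and adding the low-frequency estimate yields the assertion.

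The heart of the argument, and the step I expect to be the main obstacle, is the log-Sobolev inequality for $F$. The naive dyadic route ($\Vert F\Vert_\infty\le\sum_k\Vert\vp_k*F\Vert_\infty$ followed by Nikol'skii and H\"older) gives only the power $\ln^{1}$ through the embedding $B_{p,1}^{d/p}\subset L_\infty$, or $\ln^{1/2}$ through the Littlewood--Paley square function when $1<p<2$; neither matches $\ln^{1/p'}$. To obtain the sharp power uniformly in $1<p<\infty$ I would instead use the Bessel-potential representation. Writing $\tilde g=(I-\Delta)^{d/(2p)}F$, so that $\Vert\tilde g\Vert_p\asymp\Vert F\Vert_{\dot H_p^{d/p}}$ because the spectrum of $F$ is bounded away from the origin, we have $F=J_{d/p}*\tilde g$ with $J_{d/p}(x)\asymp|x|^{-d/p'}$ near the origin and exponential decay at infinity. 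Splitting this convolution into the regions $|z|<1/\s$, $1/\s\le|z|\le1$, and $|z|>1$ and applying H\"older on each is then the crux: the inner region is controlled by $\int_{|z|<1/\s}|z|^{-d/p'}dz\asymp\s^{-d/p}$ together with the Nikol'skii bound $\Vert\tilde g\Vert_\infty\lesssim\s^{d/p}\Vert\tilde g\Vert_p$; the outer region by the $L_{p'}$-integrability of the decaying kernel; and the middle region produces precisely the logarithm, since
\begin{equation*}
\big\Vert\,|z|^{-d/p'}\mathbf{1}_{\{1/\s\le|z|\le1\}}\big\Vert_{p'}^{p'}\asymp\int_{1/\s}^{1}r^{-1}\,dr\asymp\ln\s.
\end{equation*}
This is where the exponent $1/p'$ originates, and securing it over the full range $1<p<\infty$ — in particular for $1<p<2$, where the dyadic and square-function methods are lossy — is the delicate point of the proof.
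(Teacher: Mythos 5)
Your proposal is correct, but it takes a genuinely different route from the paper. The paper's proof does no frequency decomposition at all: it invokes the Br\'ezis--Wainger inequality \eqref{BGW} as a black box, applies it to the normalized function $P_\s/\Vert P_\s\Vert_{H_p^{\g}}$ with $l=d/p+1$ so that Bernstein's inequality turns $\Vert P_\s\Vert_{H_p^{\g+1}}/\Vert P_\s\Vert_{H_p^{\g}}$ into a factor $\lesssim\s$ inside the logarithm, and then passes to directional derivatives via Corollary~\ref{lemRAZZZ}; this first yields the weaker bound $\ln^{1/p'}(1+\s)(\Vert(-\Delta)^{(\g+\a)/2}P_\s\Vert_p+\s^{\a}\Vert P_\s\Vert_p)$, and the unwanted factor $\s^{\a}$ in front of $\Vert P_\s\Vert_p$ is then removed by a dilation/homogeneity argument ($P_\s(\cdot)\to P_\s(\e\cdot)$ with $\e=\s^{p\a+\d}$). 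You instead follow the low/high frequency splitting of Lemma~\ref{lemma+} and prove the needed band-limited logarithmic Sobolev inequality $\Vert F\Vert_\infty\lesssim\ln^{1/p'}(\s+1)\Vert F\Vert_{\dot H_p^{d/p}}$ from scratch via the Bessel-kernel representation and the three-region H\"older estimate; the middle annulus $1/\s\le|z|\le 1$ correctly produces the exponent $1/p'$, the inner region is handled by Nikol'skii, and the low-frequency piece directly yields the term $\Vert P_\s\Vert_p$ with no need for the homogeneity trick. What your argument buys is self-containedness (it essentially reproves the Br\'ezis--Wainger estimate in the band-limited setting, uniformly in $1<p<\infty$, rather than citing \cite{BW}) and a cleaner appearance of the additive $\Vert P_\s\Vert_p$ term; what the paper's route buys is brevity. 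The only points you should make explicit are the standard upper bound $J_{d/p}(x)\lesssim|x|^{-d/p'}$ for $|x|\le 1$ together with its exponential decay (Stein's book suffices), and the uniformity in $\s$ of the Mikhlin bound for the symbol $((1+|\xi|^2)/|\xi|^2)^{d/(2p)}$ on the support of $\widehat F$, both of which are routine.
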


\begin{proof}
The proof is based on the Br\'ezis–Wainger-type inequality (see~\cite{BW}), which states  that for every $f\in H_q^l(\R^d)$, $1\le q\le \infty$, $1<p<\infty$, and $l>d/q$, one has
\begin{equation}\label{BGW}
  \Vert f\Vert_\infty\lesssim \(1+\ln^\frac{1}{p'}(1+\Vert f\Vert_{H_q^l})\)
\end{equation}
provided that $\Vert f\Vert_{H_p^k}\le 1$ with $k=d/p$. Here, $H_p^s(\R^d)$ is the fractional Sobolev space, i.e., $\Vert f\Vert_{H_p^s}=\Vert (I-\D)^{s/2} f\Vert_p<\infty.$

We use inequality~\eqref{BGW} with $f=\frac{P_\s}{\Vert P_\s\Vert_{H_p^\g}}$, $\g=k=d/p$, $q=p$, and $l=d/p+1$. Then the Bernstein inequality implies that
\begin{equation*}
  \begin{split}
      \Vert P_\s\Vert_\infty\lesssim \(1+\ln^\frac{1}{p'}\(1+\frac{\Vert P_\s\Vert_{H_p^{\g+1}}}{\Vert P_\s\Vert_{H_p^\g}}\)\)\Vert P_\s\Vert_{H_p^\g}\lesssim
\ln^\frac{1}{p'}\(1+\s\)\Vert P_\s\Vert_{H_p^\g}.
  \end{split}
\end{equation*}
Next, taking into account that $\Vert P_\s\Vert_{H_p^\g}\asymp \Vert (-\Delta)^{\g/2}P_\s \Vert_{p}+\Vert P_\s \Vert_{p}$, Corollary~\ref{lemRAZZZ}, and the Bernstein inequality,  we derive
\begin{equation}\label{BGW2}
  \begin{split}
      {\sup\limits_{|\zeta|=1,\,\zeta\in\R^d}\Vert D_{\zeta}^{\a} P_\s\Vert_{\infty}}
&\lesssim
\ln^\frac{1}{p'}\(1+\s\)\(\Vert (-\Delta)^{(\g+\a)/2}P_\s \Vert_{p}+\Vert (-\Delta)^{\a/2} P_\s \Vert_{p}\)\\
&\lesssim
\ln^\frac{1}{p'}\(1+\s\)\(\Vert (-\Delta)^{(\g+\a)/2}P_\s \Vert_{p}+\s^\a\Vert P_\s \Vert_{p}\).
  \end{split}
\end{equation}
Finally, using the substitution $P_\s(\cdot)\to P_{\s}(\e \cdot)$ with $\e=\s^{p\a+\d}$, $\d>0$, by homogeneity we see that inequality~\eqref{BGW2} can be written in the following form:
\begin{equation*}\label{BGW3}
  \begin{split}
{\sup\limits_{|\zeta|=1,\,\zeta\in\R^d}\Vert D_{\zeta}^{\a} P_\s\Vert_{\infty}} \lesssim
\ln^\frac{1}{p'}\(1+\s\)\Vert (-\Delta)^{(\g+\a)/2}P_\s \Vert_{p}+\Vert P_\s \Vert_{p},
  \end{split}
\end{equation*}
which by Corollary~\ref{lemRAZZZ} implies~\eqref{will+inf}.
\end{proof}

\section{Proofs of Properties~\ref{pr1}--\ref{pr10}}

Before presenting the proofs of Properties~\ref{pr1}--\ref{pr10}, we discuss their relationship.
First, we would like to mention that in the proofs of some results (namely, Properties~\ref{pr4}, \ref{aver}, \ref{pr7}, and~\ref{pr9}) we use the equivalence of moduli of smoothness and realizations of $K$-functionals given in Property~\ref{pr10}.
Second, to prove Properties~\ref{pr6}, \ref{pr12},  \ref{pr13}, and~\ref{pr10},  the Jackson inequality given in Property~\ref{pr11} is applied.
At the same time, the proof of Property~\ref{pr11} is independent on the mentioned above Properties~\ref{pr4}, \ref{aver}, \ref{pr6}, \ref{pr7}, \ref{pr12}, \ref{pr13}, \ref{pr9}, and~\ref{pr10}.
 The proof of Property~\ref{pr10} is based on the Stechkin-Nikolskii inequality given in Corollary~\ref{corNSB}, and Properties~\ref{pr1} and~\ref{pr11}.


\subsection*{Proof of Property~\ref{pr1}}

The proof of statements (a)
(b), and (c) is standard.
For details see also~\cite{But, samko,  tabeR}. The proof of (d) repeats the proof of (3.6) in~\cite{Mil}.

\subsection*{Proof of Property~\ref{pr4}}
To prove~\eqref{Run0}, we follow the ideas from~\cite[p. 194]{run} and~\cite[Theorem~4.3]{KT19m}.
We will consider only the case $0<p<1$. 
The case $1\le p\le \infty$ follows from
(\ref{eq.th6.0d++pr}). 

Denote $\d=(\d_1,\dots,\d_d)\in \R^d$, $\d_j>0$, $j=1,\dots,d$, and
$$
K_\d(x)=\widehat{\vp_\d}(x),
$$
where $\vp_\d(t)=v\((\d,t)\)$, $v\in C^\infty(\R)$, $v(s)=1$ for $|s|\le 1/2$ and $v(s)=0$ for $|s|>1$.

We can assume that $\l>1$.
Fix $h>0$ and suppose that $|\d|\le h$ and $P\in \mathcal{B}_{1/h,p}$.
Using Property~\ref{pr1} and Lemma~\ref{lemNSB}, we obtain
\begin{equation}\label{Run1}
\begin{split}
    \Vert \D_{\l \d}^\a f\Vert_p^p&\lesssim \Vert f-K_{\l \d}*P\Vert_p^p+\Vert \D_{\l \d}^\a (K_{\l \d}*P)\Vert_p^p\\
    &\lesssim \Vert f-P\Vert_p^p+\Vert P-K_{\l \d}*P\Vert_p^p+\Vert D_{\l\d}^\a (K_{\l \d}*P)\Vert_p^p\\
    &= \Vert f-P\Vert_p^p+\Vert P-K_{\l \d}*P\Vert_p^p+\l^{\a p}\Vert  K_{\l \d}*D_{\delta}^{\a} P \Vert_p^p.
\end{split}
\end{equation}

Now, we will show that
\begin{equation}\label{Run2}
\Vert P-K_{\l \d}*P\Vert_p \lesssim \l^{\a+d(\frac1p-1)} \Vert  D_{\delta}^{\a} P \Vert_p
\end{equation}
and
\begin{equation}\label{Run3}
\Vert  K_{\l \d}*D_{\delta}^{\a} P \Vert_p \lesssim \l^{d(\frac1p-1)} \Vert  D_{\delta}^{\a} P \Vert_p.
\end{equation}
It is easy to see that \eqref{Run2} is equivalent to the following inequality
\begin{equation*}
\Vert A_{\l \d}*P\Vert_p \lesssim \l^{d(\frac1p-1)} \Vert P \Vert_p\quad\text{for all}\quad P\in \mathcal{B}_{1/h, p},
\end{equation*}
where
$$
A_\d(x)=\widehat{\psi_\d}(x)
$$
and
$$
\psi_\d(t)=\frac{(1-\vp_\d(t))
}
{(it,\d)^\a}
v\(2^{-1}{\sqrt{{
(t_1\delta_1)^2+\cdots+(t_d\delta_d)^2}}}
\).
$$
Using the properties of the convolution algebras in $L_p(\R^d)$, $0<p<1$, see, e.g.,~\cite[p.~220]{SS}, and denoting $\textbf{1}=(1,\dots,1)$,  we have
\begin{equation*}
  \begin{split}
      \Vert A_{\l \d}*P\Vert_p&\lesssim \(\prod_{j=1}^d \d_j\)^{1-\frac1p}\Vert A_{\l \d}\Vert_p \Vert P\Vert_p
=\Vert \widehat{\psi_{\l \textbf{1}}}\Vert_{p} \Vert P\Vert_p\\
&=\l^{d(\frac1p-1)}\Vert \widehat{\psi_{\textbf{1}}}\Vert_{p} \Vert P\Vert_p\lesssim \l^{d(\frac1p-1)}\Vert P\Vert_p,
  \end{split}
\end{equation*}
that is, \eqref{Run2} is verified. Inequality~\eqref{Run3} can be obtained similarly.

Combining (\ref{Run1})--(\ref{Run3}), we obtain
\begin{equation*}
  \begin{split}
    \Vert \D_{\l \d}^\a f\Vert_p&\lesssim \Vert f-P\Vert_p+\l^{\a+d(\frac1p-1)} \Vert  D_\delta^\a P\Vert_p\\
    &\lesssim \l^{\a+d(\frac1p-1)} \(\Vert f-P\Vert_p+ |\d|^\a \sup_{|\zeta|=1,\, \zeta\in \R^d}\Vert D_\zeta^\a P \Vert_p\).
    \end{split}
\end{equation*}
Finally,
taking infimum over all $P\in \mathcal{B}_{1/h,p}$ and using~\eqref{eq.th6.0d-pr}, we get
 $$
\Vert \D_{\l \d}^\a f\Vert_p\lesssim \l^{\a+d(\frac1p-1)} \mathcal{R}_\a(f,h)_p \lesssim \l^{\a+d(\frac1p-1)} \w_\a(f,h)_p,
  $$
which yields (\ref{Run0}).
\hfill$\square$

Property 2 immediately implies the following result (see also~\cite{rad} and~\cite{tik2}).


\begin{remark}
{\it Let $f\in L_p(\R^d)$,  $0<p\le \infty$, and $\a\in \N\cup ((1/p-1)_+,\infty)$. Then there exists a
function $\varphi$ such that $\varphi(\delta)\to 0$ as $\delta\to 0$, $\varphi(\delta)$ is nondecreasing, ${\varphi(\delta)}\delta^{-\a-d(1/p-1)_+}$ is nonincreasing, and}
\begin{equation*}
 \varphi(\d) \asymp \omega_\a(f,\d)_p, \quad \d>0.
\end{equation*}
\end{remark}

\begin{proof}
 Let us define $ \varphi(t):=t^\beta
\inf_{0<\xi\le t }\{\xi^{-\beta} \omega_\a(f,\xi)_p \}$  with $\b={\a+d(1/p-1)_+}$. It is not difficult to see that $\vp$ satisfies the required properties.
Clearly, $\varphi(\delta) \le \omega_\a(f,\delta
)_p$. At the same time by  (\ref{eqMonMod}) we have
$$
\omega_\a(f,t )_p \lesssim t^\beta
\inf\limits_{0<\xi\le t }\left\{\xi^{-\beta} \omega_\a(f,\xi)_p
\right\}= \varphi(t).
$$
\end{proof}



\subsection*{Proof of Property~\ref{mix}}
In the case $1\le p\le \infty$, the proof of~\eqref{mix1} is given in~\cite[Lemma~4.11, p.~338]{BeSh} and is based on the representation of the total difference of a function via the sum of the mixed differences (Kemperman's formula).
Following the same proof,    we arrive at equivalence~\eqref{mix1} in the case $0<p<1$ as well.

The proof of~\eqref{mix2} can be obtained repeating the proof of Theorem~8 in~\cite{TiMod}.\hfill$\square$

\subsection*{Proof of Property~\ref{sob}} The proof of~\eqref{esob} follows from the equivalence of the $K$-functional corresponding to the couple $(L_p, \dot W_p^r)$ and the modulus $\w_r(f,\d)_p$ and the closedness of the unit ball of $\dot W_p^r(\R^d)$ in $L_p(\R^d)$, see~\cite{Kr07} and~\cite{Cw}.

\subsection*{Proof of Property~\ref{sum}}
The proof of~\eqref{sum1} follows by applying the H\"older inequality and the following equality, which can be easily verified by induction:
$$
\Delta^r_h (f g)=\sum_{k=0}^r \binom{r}{k} \Delta_h^k f(x) \Delta_h^{r-k}g(x+kh).
$$

\subsection*{Proof of Property~\ref{aver}}
In the case $\a\in \N$, the equivalence in~\eqref{aver1} can be proved
by standard way using the equality
\begin{equation}\label{ave*}
  \D_h^r f(x)=\sum_{k=1}^r (-1)^k \binom{r}{k} [\D_{kt}^r f(x+kh)-\D_{h+kt}^r f(x)],
\end{equation}
which holds for any $t,h\in \R^d$, see, e.g.~\cite[Ch.~6, \S~5]{DL}.

Consider the equivalence in~\eqref{aver1-}. We give the detailed proof of the estimate
\begin{equation}\label{ave1}
  \w_r(f,\d)_p\lesssim \bigg\Vert \bigg(\d^{-d}\int_{|t|\le \d} |\D_t^r f(\cdot)|^q dt \bigg)^{1/q}\bigg\Vert_p.
\end{equation}
The  inverse estimate is clear by Minkowski's inequality under the condition $q\le p$.

Integrating~\eqref{ave*}, we have for any positive $q$ and $\d\ge |h|$ that
\begin{equation}\label{ave2}
  |\D_h^r f(x)|^q\lesssim \sum_{k=1}^r \d^{-d}\int_{|t|\le \d} \(|\D_{kt}^r f(x+kh)|^q +|\D_{h+kt}^r f(x)|^q\) dt.
\end{equation}
Therefore,
\begin{equation}\label{ave3}
  \begin{split}
     \Vert \D_h^r f\Vert_p&\lesssim \sum_{k=1}^r \bigg\Vert \bigg(\d^{-d}\int_{|t|\le \d} |\D_{kt}^r f(x+kh)|^q dt \bigg)^{1/q} \bigg\Vert_p\\
            &\qquad\qquad\qquad\qquad+\sum_{k=1}^r\bigg\Vert \bigg(\d^{-d}\int_{|t|\le \d}|\D_{h+kt}^r f(x)|^q dt \bigg)^{1/q} \bigg\Vert_p\\
            &\lesssim \bigg\Vert \bigg(\d^{-d}\int_{|t|\le (r+1)\d}|\D_{t}^r f(x)|^q dt \bigg)^{1/q} \bigg\Vert_p.
  \end{split}
\end{equation}
Using Property~\ref{pr4}, it is not difficult to see that~\eqref{ave3} implies~\eqref{ave1}.

Now, we prove~\eqref{aver2} for fractional moduli of smoothness. It is clear that it suffices to prove the estimate from above.  Let $P\in \mathcal{B}_{\l_\a/\d, p}$, where $\l_\a\in (0,1)$ will be chosen later. By~\eqref{kwil}, we have
\begin{equation}\label{aver3}
  \w_\a(f,\d)_p\lesssim \Vert f-P\Vert_p+\d^\a \Vert (-\D)^{\a/2} P\Vert_p.
\end{equation}
Suppose that $\Vert f-P\Vert_p\lesssim E_{\l_\a/\d}(f)_p$. Using Jackson's inequality~\eqref{JacksonSO-pr}, Property~\ref{pr4}, and~\eqref{aver1}, we have for an integer $r>\a$ that
\begin{equation}\label{aver4}
\begin{split}
     \Vert f-P\Vert_p&\lesssim \w_r(f,\l_\a^{-1}\d)_p \lesssim \w_r(f,\d)_p \\
     &\lesssim \d^{-d}\int_{|h|\le \d} \Vert \D_h^r f\Vert_p dh \lesssim \d^{-d}\int_{|h|\le \d} \Vert \D_h^\a f\Vert_p dh.
\end{split}
\end{equation}

Let us show that
\begin{equation}\label{aver5}
 \d^\a \Vert (-\D)^{\a/2} P\Vert_p \lesssim \bigg\Vert \d^{-d}\int_{|h|\le \d} \D_h^\a P(\cdot-\a h) dh \bigg\Vert_p.
\end{equation}
We will follow an idea from~\cite[8.2.9]{TB}, see also~\cite{K12}. First, we note that~\eqref{aver5} can be rewritten in the following equivalent form
\begin{equation*}
 \Vert \Lambda_{\d,\a}(P)\Vert_p\lesssim \Vert P\Vert_p\quad\text{for any}\quad P\in \mathcal{B}_{\l_\a/\d,p},
\end{equation*}
where
$$
\Lambda_{\d,\a}(P)(x)=\mathcal{F}^{-1}\(g(\d \xi)\widehat{P}(\xi)\)(x)
$$
and
$$
g(x)=\frac{|x|^\a v(\l_\a^{-1}x)}{\int_{|h|\le 1} (1-e^{i (h,x)})^\a dh},
$$
$v\in C^{\infty}(\R^d)$, $v(x)=1$ for $|x|\le 1$ and $v(x)=0$ for $|x|\ge 2$.
Here and throughout to define the fractional exponents, we use the principal branch of the logarithm.
Thus, to prove~\eqref{aver5} it suffices to show that the function $g$ is a Fourier multiplier in $L_p(\R^d)$.
After some simple calculations, we derive that
\begin{equation}\label{aver7}
\begin{split}
    &\int_{|h|\le 1} (1-e^{i (h,x)})^\a dh=c_d\int_0^1 r^{d-1}\int_{-1}^1 (1-e^{iru|x|})^\a (1-u^2)^{\frac{d-3}{2}}dudr\\
    &=c_d2^{\a+1}\int_0^1 r^{d-1}\int_{0}^1 \cos\frac{\a (ru|x|-\pi)}{2} \(\sin \frac{ru|x|}{2}\)^\a (1-u^2)^{\frac{d-3}{2}}dudr,
\end{split}
\end{equation}
where $c_d=\frac{2\pi^{\frac{d-1}{2}}}{\Gamma(\frac{d-1}{2})}$. Then, we can set $g(0)=\lim_{|x|\to 0} \frac{|x|^\a v(\l_\a^{-1}x)}{\int_{|h|\le 1} (1-e^{i (h,x)})^\a dh}$.
It is also clear that we can always find positive $\l_\a$ such that the integral in~\eqref{aver7} is strictly positive or negative for $0<|x|\le 2\l_\a$.  Therefore, $g$ is a smooth function on $\R^d\setminus \{0\}$ with a compact support, which implies that $g$ is a Fourier multiplier in $L_p(\R^d)$ (see~\cite[5.2.3]{Gr}).   This proves~\eqref{aver5}.

Next, combining~\eqref{aver4} and~\eqref{aver5} and using Minkowski's inequality, we derive
\begin{equation}\label{aver8}
  \begin{split}
     \d^\a \Vert (-\D)^{\a/2} P\Vert_p&\lesssim \Vert f-P\Vert_p+\bigg\Vert \d^{-d}\int_{|h|\le \d} \D_h^\a f(\cdot -\a h) dh \bigg\Vert_p\\
     &\lesssim\d^{-d}\int_{|h|\le \d} \Vert \D_h^\a f\Vert_p dh.
  \end{split}
\end{equation}
Finally, collecting~\eqref{aver3}, \eqref{aver4}, and~\eqref{aver8}, we prove the required estimate.
\medskip

To prove~\eqref{aver2} in the cases $d=1$ and $\a>(1/p-1)_+$, one should repeat step-by-step the proof of Theorem~2.1 from~\cite{K17} with the help of the Jackson inequality~\eqref{JacksonSO-pr} and Lemma~\ref{lemNSB}.
Equivalence~\eqref{aver2+} was obtained in~\cite[8.2.9]{TB} for periodic functions. The proof on $\R^d$ is similar using the fact that ${\xi_j}/{|\xi|}$ is the Fourier multiplier in $L_p(\R^d)$.\hfill$\square$


\subsection*{Proof of Property~\ref{pr6}}
The proof of~\eqref{eq.lemMarchaudMod-pr} is a combination of the direct and inverse estimates
\eqref{JacksonSO-pr}
and
(\ref{eqconverseMod}) as well as the relation $\w_\a(f,\d)_p\asymp \w_\a(f,2\d)_p$ (see (\ref{Run0})).
Note that $E_0(f)_p\le \Vert f\Vert_p$.

Inequality~\eqref{eq.lemMarchaudModinf} follows from~\eqref{eq.lemMarchaudMod-pr} by replacing $f\to f(\e\cdot)$ and $\d\to \frac\d\e$, changing the variables and letting $\e\to\infty$. On the other hand, \eqref{eq.lemMarchaudModinf}  clearly implies~\eqref{eq.lemMarchaudMod-pr} since
$\w_{\g+\a}(f,t)_p \lesssim\Vert f\Vert_p$. \hfill$\square$

\subsection*{Proof of Property~\ref{pr6+}}
Inequality~\eqref{propodmod} follows from~\eqref{binom}. Inequality~\eqref{SJJ} follows from~\cite{ddt} and~\eqref{kwil}.
\hfill$\square$

\subsection*{Proof of Property~\ref{pr7}}

This property follows from Theorem~\ref{thMainMod} below taking $m=\gamma$.
We start with the following lemma.
%
%

\begin{lemma}\label{th1UMMod1}
 Let $f\in L_p(\R^d)$, $0<p<q\le \infty$, $\a\in\N\cup
    ((1/q-1)_+,\infty)$, and $\g, m> 0$ be  such that $\a+\g, \a+m, m-\g\in \N\cup
    ((1/p-1)_+,\infty)$.
Then, for all $\d\in (0,1)$, we have
\begin{equation}\label{eqthRealKUMMod1}
\begin{split}
     \w_\a(f,\d)_q\lesssim \d^\a &\(\int_\d^1 \(\frac{\w_{\a+\g}(f,t)_p}{t^{\a+d(\frac1p-\frac1q)}}\)^\theta\frac{dt}{t}
  +\Vert f\Vert_p^\theta\)^\frac1\theta\\
  &\qquad\qquad\qquad+\(\int_0^\d \(\frac{\w_{\a+m}(f,t)_p}{t^{d(\frac1p-\frac1q)}} \)^{q_1}\frac{dt}{t}   \)^\frac1{q_1},
\end{split}
\end{equation}
where
\begin{equation}\label{tau+}
\theta=\left\{
       \begin{array}{ll}
         \min(q,2), & \hbox{$q<\infty;$} \\
         1, & \hbox{$q=\infty$,}
       \end{array}
     \right.\quad q_1=\left\{
       \begin{array}{ll}
         q, & \hbox{$q<\infty;$} \\
         1, & \hbox{$q=\infty.$}
       \end{array}
     \right.
\end{equation}
\end{lemma}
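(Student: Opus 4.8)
The plan is to reduce the inequality to a decomposition of $f$ into entire functions of exponential type and to apply the realization result (Property~\ref{pr10}) together with the Nikolskii inequality~\eqref{eqNIKNIK} and the Bernstein-type inequality from Corollary~\ref{corNSBBEr}. First I would fix $\d\in(0,1)$ and choose, for each dyadic level $\s=2^j$ with $2^j\gtrsim 1/\d$, a near-best approximant $P_{2^j}\in\mathcal{B}_{2^j,p}$ satisfying $\Vert f-P_{2^j}\Vert_p\lesssim E_{2^j}(f)_p$. Writing $Q_0=P_{2^{j_0}}$ where $2^{j_0}\asymp 1/\d$ and $Q_j=P_{2^{j_0+j}}-P_{2^{j_0+j-1}}$ for $j\ge 1$, I obtain the telescoping decomposition $f=Q_0+\sum_{j\ge 1}Q_j$ in $L_p$, where each $Q_j\in\mathcal{B}_{2^{j_0+j},p}$ and $\Vert Q_j\Vert_p\lesssim E_{2^{j_0+j-1}}(f)_p$. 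The idea is to estimate $\w_\a(f,\d)_q$ by splitting the block sum at the scale $2^{j_0}\asymp 1/\d$: on the "low-frequency" part I use $\w_\a(Q,\d)_q\lesssim \d^\a\sup_{|\zeta|=1}\Vert D_\zeta^\a Q\Vert_q$ (valid by Property~\ref{pr1} and the smoothness of $Q$), while on the "high-frequency" part I bound $\w_\a(Q_j,\d)_q\lesssim \Vert Q_j\Vert_q$ directly.

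The crucial passage from the $p$-norm to the $q$-norm is made by Nikolskii's inequality~\eqref{eqNIKNIK}: for $Q_j\in\mathcal{B}_{2^{j_0+j},p}$ one has $\Vert Q_j\Vert_q\lesssim 2^{(j_0+j)d(\frac1p-\frac1q)}\Vert Q_j\Vert_p$, and likewise $\sup_{|\zeta|=1}\Vert D_\zeta^\a Q_j\Vert_q\lesssim 2^{(j_0+j)d(\frac1p-\frac1q)}\sup_{|\zeta|=1}\Vert D_\zeta^\a Q_j\Vert_p$. Combining this with Corollary~\ref{corNSBBEr} to express the directional-derivative norms through $\Vert Q_j\Vert_p$ converts the two pieces into two weighted sums of $E_{2^{j_0+j}}(f)_p$, one carrying the factor $\d^\a 2^{-(j_0+j)\a}$ from the low-frequency derivative bound and the other carrying the Nikolskii factor alone. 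At this stage the subadditivity of $t\mapsto t^{\theta}$ (resp. $t\mapsto t^{q_1}$) for the appropriate exponents in~\eqref{tau+}, which reflect the $(q)$-(quasi)triangle inequality on the target side, is applied to assemble the blocks.

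Finally I would convert the resulting sums of best approximations $E_{2^{j_0+j}}(f)_p$ back into integrals of moduli of smoothness. This is where the two distinct moduli $\w_{\a+\g}$ and $\w_{\a+m}$ enter: on the low-frequency range (integration over $(\d,1)$) I invoke Jackson's inequality~\eqref{JacksonSO-pr} with the modulus $\w_{\a+\g}$, while on the high-frequency range (integration over $(0,\d)$) I use it with $\w_{\a+m}$, the two different orders producing exactly the two weights $t^{\a+d(\frac1p-\frac1q)}$ and $t^{d(\frac1p-\frac1q)}$ appearing in~\eqref{eqthRealKUMMod1}; the extra $\Vert f\Vert_p^\theta$ term collects the lowest block $Q_0$ via $E_0(f)_p\le\Vert f\Vert_p$. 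Passing from the discrete dyadic sums to the continuous integrals is routine once one uses the monotonicity~\eqref{eqMonMod} of Property~\ref{pr4} to compare $\w_{\a+\g}(f,2^{-j})_p$ with the average of $\w_{\a+\g}(f,t)_p$ over a dyadic block. The main obstacle I anticipate is bookkeeping the interplay of the two exponents $\theta$ and $q_1$ and verifying the hypotheses $\a+\g,\a+m,m-\g\in\N\cup((1/p-1)_+,\infty)$ are exactly what is needed for both applications of Jackson's inequality and for the reverse Marchaud estimate~\eqref{propodmod} that lets one replace $\w_\a$ on the high-frequency side by the higher-order modulus $\w_{\a+m}$; keeping the endpoint case $q=\infty$ (where $\theta=q_1=1$) consistent throughout will require some care.
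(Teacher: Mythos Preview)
Your telescoping-plus-Nikolskii plan proves a weaker statement than the one asserted. The problem is the exponents $\theta$ and $q_1$ in~\eqref{tau+}. You intend to assemble the dyadic blocks via ``the subadditivity of $t\mapsto t^{\theta}$ (resp.\ $t\mapsto t^{q_1}$) \dots which reflect the $(q)$-(quasi)triangle inequality,'' but $t\mapsto t^\theta$ is subadditive only when $\theta\le 1$. For $1<q<\infty$ the lemma demands $\theta=\min(q,2)>1$ and $q_1=q>1$, whereas the (ordinary) triangle inequality in $L_q$ yields only exponent~$1$ on both sums. Since a larger exponent shrinks the $\ell^\theta$-norm, your route produces the inequality with $\theta=q_1=1$ in this range, strictly weaker than~\eqref{eqthRealKUMMod1}. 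The plan is salvageable as written only for $0<q\le 1$ and $q=\infty$, where indeed $\theta=q_1\le 1$.

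Getting the sharp exponents requires tools beyond a block-by-block triangle inequality, and this is exactly why the paper argues differently. The exponent $\theta=\min(q,2)$ on the integral over $(\d,1)$ is inherited from the sharp Marchaud inequality~\eqref{eq.lemMarchaudMod-pr} applied in $L_q$, whose proof for $1<q<\infty$ passes through the Littlewood--Paley-based inverse theorem~\eqref{eqconverseMod}. The exponent $q_1=q$ on the integral over $(0,\d)$ comes from the classical Ulyanov inequality~\eqref{diti2} of~\cite{diti}, which itself rests on a refined Nikolskii-type bound (cf.~\eqref{eqth++} and \cite[Lemma~4.2]{diti}) rather than on subadditivity. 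The paper simply chains these two sharp inequalities: apply Marchaud in $L_q$ to pass from $\w_\a$ to $\w_{\a+\g}$, insert the Ulyanov bound~\eqref{diti2} for the $L_q$ modulus, collapse the resulting double integral with a weighted Hardy inequality, and control $\Vert f\Vert_q$ via~\eqref{-diti1++}. If you want to rescue your direct approach for $1<q<\infty$, you would have to replace the naive summation step by these same two ingredients, at which point you are essentially reproving Properties~\ref{pr6} and~\ref{pr7} inside the lemma.
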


\begin{proof}

The proof is based on the Marchaud inequality given by~\eqref{eq.lemMarchaudMod-pr}
 and the following Ulyanov type inequalities (see~\cite[Theorem~7.2]{diti})
\begin{equation}\label{diti2}
          \w_{\a+m}(f,\d)_q\lesssim \(\int_0^\d \(
   \frac{\w_{\a+m}(f,t)_p}{t^{d(\frac1p-\frac1q)}}
 \)^{q_1}\frac {dt}{t}\)^\frac 1{q_1}
\end{equation}
and
\begin{equation}\label{-diti1++}
  \Vert f\Vert_q\lesssim \(\int_0^1 \(\frac{\w_{\a+m}(f,t)_p}{t^{d(\frac1p-\frac1q)}}\)^{q_1}\frac{dt}{t}\)^\frac1{q_1}+\Vert f\Vert_p.
\end{equation}
In the latter estimate, we have taken into account the fact that Jackson's inequality~\eqref{JacksonSO-pr} holds for any $0<p\le \infty$.
Using~\eqref{eq.lemMarchaudMod-pr} and~\eqref{diti2}, we derive
\begin{equation*}\label{eqUlRealKforM2}
\begin{split}
  \w_{\a}(f,\d)_q
&\lesssim \d^\a
\(\int_\d^1
u^{-\a \t}
\(
\int_0^u \( \frac{\w_{\a+m}(f,t)_p}{t^{d(\frac1p-\frac1q)}} \)^{q_1}\frac{dt}{t}
\)^{\frac\t{q_1}}
\frac{du}u+\Vert f\Vert_q^\t
\)^\frac1\t.
\end{split}
\end{equation*}
It remains to apply the weighted Hardy inequality for averages (see, e.g.,~\cite[p.~124]{BeSh}) and take into account that
in light of~\eqref{-diti1++}, we have  
\begin{equation*}
  \begin{split}
      \Vert f\Vert_q
       &\lesssim \(\int_0^\d \(\frac{\w_{\a+m}(f,t)_p}{t^{d(\frac1p-\frac1q)}}\)^{q_1}\frac{dt}{t}\)^\frac1{q_1}+\(\d^{\a\t}\int_\d^1 \(\frac{\w_{\a+m}(f,t)_p}{t^{\a+d(\frac1p-\frac1q)}}\)^\theta\frac{dt}{t}+\Vert f\Vert_p^\theta\)^\frac1{\theta}.
   \end{split}
\end{equation*}
\end{proof}

\begin{corollary}\label{cor13.18}
  Under the conditions of Lemma~\ref{th1UMMod1}, we have
\begin{equation}\label{qqqqqqqqqqqqqqwwwwwwwwwwwwwwweeeeeeeeeeeeerrrrrrrrrrr}
\begin{split}
         &\w_{\a}(f,\d)_q\lesssim \left(\int_0^\d\bigg(\frac{\w_{\a+m}(f,t)_p}{t^{d(\frac1p-\frac1q)}}\bigg)^{q_1}\frac{dt}{t}\right)^{\frac1{q_1}}+\d^\a\Vert f\Vert_p\\
&+
\frac{\w_{\a+\g}(f,\d)_p}{\d^{\g+d(\frac1p-1)_+}}\left\{
                                          \begin{array}{ll}
                                            1, & \hbox{$\g>d\(\frac1p-\frac1q\)-d\(\frac1p-1\)_+$}; \\
                                            \ln^{1/\theta}\(\frac1\d+1\), & \hbox{$\g=d\(\frac1p-\frac1q\)-d\(\frac1p-1\)_+$}; \\
                                            \(\frac1\d\)^{d(\frac1p-\frac1q)-d(\frac1p-1)_+-\g}, & \hbox{$\g<d\(\frac1p-\frac1q\)-d\(\frac1p-1\)_+$}.
                                          \end{array}
                                        \right.
\end{split}
\end{equation}

\end{corollary}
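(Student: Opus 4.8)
The plan is to derive Corollary~\ref{cor13.18} directly from Lemma~\ref{th1UMMod1} by estimating the first integral in~\eqref{eqthRealKUMMod1} using the monotonicity property of moduli of smoothness (Property~\ref{pr4}), namely inequality~\eqref{eqMonMod}. The strategy is to replace the Marchaud-type integral term
$$
\d^\a\(\int_\d^1 \(\frac{\w_{\a+\g}(f,t)_p}{t^{\a+d(\frac1p-\frac1q)}}\)^\theta\frac{dt}{t}+\Vert f\Vert_p^\theta\)^\frac1\theta
$$
by an explicit expression involving only $\w_{\a+\g}(f,\d)_p$ and $\Vert f\Vert_p$, while leaving the second term in~\eqref{eqthRealKUMMod1} essentially untouched since it already matches the first term on the right-hand side of~\eqref{qqqqqqqqqqqqqqwwwwwwwwwwwwwwweeeeeeeeeeeeerrrrrrrrrrr}.

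First I would isolate the integral $I:=\int_\d^1 \big(\w_{\a+\g}(f,t)_p\,t^{-\a-d(1/p-1/q)}\big)^\theta\frac{dt}{t}$ and apply Property~\ref{pr4} in the form~\eqref{eqMonMod} with smoothness order $\a+\g$: since $\w_{\a+\g}(f,t)_p\,t^{-(\a+\g)-d(1/p-1)_+}$ is essentially nonincreasing, we have $\w_{\a+\g}(f,t)_p\lesssim (t/\d)^{(\a+\g)+d(1/p-1)_+}\w_{\a+\g}(f,\d)_p$ for $t\ge\d$. Substituting this bound into $I$ reduces the integrand to a pure power of $t$, namely $t^{[(\g+d(1/p-1)_+)-d(1/p-1/q)]\theta}$ up to the constant $\big(\w_{\a+\g}(f,\d)_p\,\d^{-(\a+\g)-d(1/p-1)_+}\big)^\theta$. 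The sign of the exponent $\g+d(1/p-1)_+-d(1/p-1/q)=\g-\big(d(1/p-1/q)-d(1/p-1)_+\big)$ dictates three cases, which is exactly the trichotomy appearing in the braces of~\eqref{qqqqqqqqqqqqqqwwwwwwwwwwwwwwweeeeeeeeeeeeerrrrrrrrrrr}.

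Next I would evaluate the resulting elementary integral $\int_\d^1 t^{c\theta}\frac{dt}{t}$ in each regime. When the exponent is positive (the case $\g>d(1/p-1/q)-d(1/p-1)_+$) the integral is bounded by a constant times its value at the upper endpoint; after multiplying back by $\d^\a$ and the extracted prefactor, a clean computation shows the power of $\d$ collapses to give the term $\w_{\a+\g}(f,\d)_p\,\d^{-\g-d(1/p-1)_+}$ with constant $1$. When the exponent vanishes, the integral produces a factor $\ln(1/\d)$, which after raising to the power $1/\theta$ yields the logarithmic factor $\ln^{1/\theta}(1/\d+1)$. When the exponent is negative, the integral is controlled by its lower endpoint $t=\d$, producing the additional power $(1/\d)^{d(1/p-1/q)-d(1/p-1)_+-\g}$. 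The term $\Vert f\Vert_p^\theta$ inside the Marchaud integral, multiplied by $\d^\a$ after taking the $1/\theta$ power, contributes precisely the summand $\d^\a\Vert f\Vert_p$.

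\textbf{The main obstacle} will be bookkeeping the exponents carefully so that $\d^\a$ times the prefactor $\d^{-(\a+\g)-d(1/p-1)_+}$ combines correctly with the endpoint value of the integral to produce exactly $\d^{-\g-d(1/p-1)_+}$ in each case; the algebra is routine but error-prone, and one must verify that the boundary between cases aligns with the stated threshold $\g=d(1/p-1/q)-d(1/p-1)_+$. A minor technical point is that the monotonicity bound from~\eqref{eqMonMod} is stated for order $\a+\g$, so I must confirm $\a+\g\in\N\cup((1/p-1)_+,\infty)$, which is guaranteed by the hypotheses of Lemma~\ref{th1UMMod1}. No deep new idea is needed beyond combining the Marchaud estimate of Lemma~\ref{th1UMMod1} with the power-growth bound for moduli of smoothness.
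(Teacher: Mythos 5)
Your proposal is correct and is exactly the paper's argument: the authors' proof of Corollary~\ref{cor13.18} consists precisely of plugging the growth bound $\w_{\a+\g}(f,t)_p\lesssim (t/\d)^{\a+\g+d(1/p-1)_+}\w_{\a+\g}(f,\d)_p$ from Property~\ref{pr4} into the Marchaud-type integral of Lemma~\ref{th1UMMod1} and evaluating the resulting power integral in the three regimes, with the $\Vert f\Vert_p^\theta$ term giving $\d^\a\Vert f\Vert_p$. Your exponent bookkeeping and the case threshold $\g=d(1/p-1/q)-d(1/p-1)_+$ both check out.
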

The proof of the corollary immediately follows from  inequality~\eqref{eqthRealKUMMod1} and Property~\ref{pr4}.

\medskip

 Now we present a slightly stronger version of the sharp Ulyanov inequality than the one given in~\eqref{ulpr}.
\begin{theorem}\label{thMainMod}
    Let $f\in L_p(\R^d)$, $0<p<q\le\infty$,  $\a \in \N\cup ((1-1/q)_+,\infty)$, and $\g,m\ge 0$ be such that $\a+\g, \a+m, m-\g\in \N\cup ((1/p-1)_+,\infty)$.
    Then, for any $\d \in (0,1)$, we have
    \begin{equation}\label{eqlemMM1}
        \w_\a(f,\d)_q\lesssim \d^\a \Vert f\Vert_p+\frac{\w_{\a+\g}(f,\d)_p}{\d^\g}\eta\(\frac 1\d\)+\(\int_0^\d   \(  \frac{\w_{\a+m}(f,t)_p}{t^{d(\frac1p-\frac1q)}} \)^{q_1}\frac {dt}{t}\)^\frac 1{q_1},
    \end{equation}
where the function $\eta$ is defined in~\eqref{ulpr1} and~\eqref{ulpr2}.
\end{theorem}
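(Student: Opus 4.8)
The plan is to read off the three summands of~\eqref{eqlemMM1} from the refined Marchaud--Ulyanov estimate~\eqref{eqthRealKUMMod1} of Lemma~\ref{th1UMMod1}, and to upgrade only the borderline regime by means of the Hardy--Littlewood--Nikol'skii inequalities of Subsection~\ref{sec5}. Observe first that the remainder term $\d^\a\Vert f\Vert_p$ and the integral term involving $\w_{\a+m}$ already occur verbatim in~\eqref{eqthRealKUMMod1}; hence it suffices to convert the remaining contribution
\[
J:=\d^\a\(\int_\d^1\(\frac{\w_{\a+\g}(f,t)_p}{t^{\a+d(\frac1p-\frac1q)}}\)^\theta\frac{dt}{t}\)^{1/\theta}
\]
into the middle term $\d^{-\g}\w_{\a+\g}(f,\d)_p\,\eta(1/\d)$.

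First I would insert the monotonicity estimate of Property~\ref{pr4}, namely $\w_{\a+\g}(f,t)_p\lesssim (t/\d)^{\a+\g+d(1/p-1)_+}\w_{\a+\g}(f,\d)_p$ for $t\ge\d$. This reduces $J$ to a constant times $\d^{-\g-d(1/p-1)_+}\w_{\a+\g}(f,\d)_p\,\big(\int_\d^1 t^{\mu\theta}\,dt/t\big)^{1/\theta}$, where $\mu:=\g-\g_*$ with critical exponent $\g_*:=d(\frac1p-\frac1q)-d(\frac1p-1)_+$ (one checks $\g_*=d(1-\frac1q)$ for $p\le1$ and $\g_*=d(\frac1p-\frac1q)$ for $p>1$, matching the thresholds in~\eqref{ulpr1} and~\eqref{ulpr2}). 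Evaluating the elementary scalar integral in the regimes $\mu>0$, $\mu<0$, $\mu=0$ yields the factors $(1/\d)^{d(1/p-1)_+}$, $(1/\d)^{d(\frac1p-\frac1q)-\g}$, and $(1/\d)^{d(1/p-1)_+}\ln^{1/\theta}(1/\d+1)$ respectively; this is precisely Corollary~\ref{cor13.18}, and in the two non-borderline regimes it already reproduces the $\eta$ of~\eqref{ulpr1}--\eqref{ulpr2} (the degenerate endpoint $\g=0$, where the two moduli coincide, is the classical Ulyanov inequality and must be argued directly from the integral term).

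It remains to sharpen the borderline case $\g=\g_*$, where the monotonicity argument loses a logarithmic factor. Here I would drop~\eqref{eqthRealKUMMod1} and pass to realizations: choose a near-best approximant $P\in\mathcal{B}_{1/\d,p}$ with $\Vert f-P\Vert_p\lesssim E_{1/\d}(f)_p$, and split $\w_\a(f,\d)_q\lesssim\w_\a(f-P,\d)_q+\w_\a(P,\d)_q$ by Property~\ref{pr1}. The first summand is dominated by $\Vert f-P\Vert_q$ and reproduces the remainder and integral terms exactly as in Lemma~\ref{th1UMMod1}. For the second, Remark~\ref{rem2}\,(i) gives $\w_\a(P,\d)_q\asymp\d^\a\sup_{|\zeta|=1}\Vert D_\zeta^\a P\Vert_q$, to which I would apply the appropriate sharp inequality: Lemma~\ref{lemma+} when $0<p\le1,\ 1<q<\infty$ (producing $\ln^{1/q}$), Lemma~\ref{lemPolSob} in the Sobolev subcase $\a+\g\in\N,\ \g\ge1,\ d\ge2$ (producing no logarithm), and Lemma~\ref{lemma+inf} when $1<p<q=\infty$ (producing $\ln^{1/p'}$). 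Converting $\sup_{|\zeta|=1}\Vert D_\zeta^{\a+\g}P\Vert_p$ back to $\d^{-(\a+\g)}\w_{\a+\g}(P,\d)_p$ via the Nikol'skii--Stechkin--Boas equivalence~\eqref{ineqNS3cor}, and then to $\w_{\a+\g}(f,\d)_p$ via Jackson's inequality~\eqref{JacksonSO-pr}, yields exactly $\d^{-\g}\w_{\a+\g}(f,\d)_p\,\eta(1/\d)$ with the sharp $\eta$; the residual term $\Vert P\Vert_q$ is handled by Nikol'skii's inequality~\eqref{eqNIKNIK} together with the embedding~\eqref{-diti1++}, splitting the resulting integral at scale $\d$ so that its pieces are absorbed into $\d^\a\Vert f\Vert_p$ and the integral term.

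The main obstacle is precisely this borderline regime $\g=\g_*$. The monotonicity estimate is insensitive to the finer distribution of $\w_{\a+\g}$ across dyadic scales and can only deliver the non-sharp exponent $1/\theta$ on the logarithm, together with a spurious logarithm in the Sobolev subcase. Extracting the correct powers $\ln^{1/q}$ and $\ln^{1/p'}$, and removing the logarithm altogether when $\a+\g\in\N,\ \g\ge1,\ d\ge2$, is exactly where the new input---the Hardy--Littlewood--Nikol'skii inequalities of Lemmas~\ref{lemma+}, \ref{lemPolSob}, and~\ref{lemma+inf} and the Sobolev-type embeddings behind them---becomes indispensable; the remaining bookkeeping is routine.
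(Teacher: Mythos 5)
Your argument reproduces the paper's own proof in all essentials: Lemma~\ref{th1UMMod1} combined with the monotonicity of Property~\ref{pr4} (that is, Corollary~\ref{cor13.18}) handles the non-critical values of $\g$, and at the critical value $\g=\g_*$ one passes to the realization with a near-best approximant and invokes the Hardy--Littlewood--Nikol'skii inequalities of Lemmas~\ref{lemma+}, \ref{lemPolSob} and~\ref{lemma+inf}; your conversion of $\sup_{|\zeta|=1}\Vert D_\zeta^{\a+\g}P\Vert_p$ back to $\d^{-(\a+\g)}\w_{\a+\g}(f,\d)_p$ via~\eqref{ineqNS3cor} and Jackson's inequality, and the absorption of $\Vert P\Vert_q$ through~\eqref{-diti1++}, are precisely the steps~\eqref{GGGI}--\eqref{GGGI2} of the paper.

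One subcase of the critical regime slips through your case analysis: $0<p\le 1$, $q=\infty$, $d=1$, $\g=d(1-1/q)=1$ (the paper's case (vii)). There $\eta(t)=t^{1/p-1}$ carries no logarithm, yet none of your three lemmas applies --- Lemma~\ref{lemma+} requires $q<\infty$, Lemma~\ref{lemPolSob} requires $d\ge 2$, and Lemma~\ref{lemma+inf} requires $p>1$ --- while Corollary~\ref{cor13.18} with $\theta=1$ leaves a spurious factor $\ln(1/\d)$. The paper closes this case by the elementary identity $P_\s(x)=\int_{-\infty}^{x}P_\s'(t)\,dt$, i.e.\ the one-dimensional bound $\Vert P_\s\Vert_\infty\le\Vert P_\s'\Vert_1$ for entire functions of exponential type in $L_1(\R)$, which vanish at infinity by~\cite[Theorem~3.2.5]{nikol-book}; with this one-line addition your argument is complete. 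A minor bookkeeping remark: for $q=\infty$, $d\ge2$, $\a+\g\notin\N$ (and likewise for $1<q\le 2$) the exponent $1/\theta$ in Corollary~\ref{cor13.18} already coincides with the sharp $1/q_1$ of~\eqref{ulpr1}, so these critical subcases need no appeal to Lemma~\ref{lemma+} (which would in any event be unavailable at $q=\infty$).
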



%
%

\begin{proof}
The case $\g=0$ is given in~\eqref{diti2}. If $\g>0$, we devide the proof into two cases: $0<p\le 1$ and $p>1$.
First, consider the case $0<p\le 1$ and $p<q\le \infty$.
Then we apply inequality~\eqref{qqqqqqqqqqqqqqwwwwwwwwwwwwwwweeeeeeeeeeeeerrrrrrrrrrr} in the following form:
\begin{equation*}
\begin{split}
  \w_\a(f,\d)_q\lesssim \d^\a \Vert f\Vert_p&+\(\int_0^\d \(\frac{\w_{\a+m}(f,t)_p}{t^{d(\frac1p-\frac1q)}} \)^{q_1}\frac{dt}{t}   \)^\frac1{q_1}\\
&+\frac {\w_{\a+\g}(f,\d)_p}{\d^{\g+d(\frac1p-1)}}\left\{
                                          \begin{array}{ll}
                                            1, & \hbox{$\g>d\(1-\frac1q\)$;} \\
                                            \ln^{1/\theta}\(\frac1\d+1\), & \hbox{$\g=d\(1-\frac1q\)$;} \\
                                            \(\frac1\d\)^{d(1-\frac1q)-\g}, & \hbox{$0<\g<d\(1-\frac1q\)$.}
                                          \end{array}
                                        \right.
\end{split}
\end{equation*}
Hence, \eqref{eqlemMM1} holds for the following parameters: 

\begin{itemize}

  \vspace{2mm} \item[(i)] $\g>0$ and $0<q\le 1$;

 \vspace{2mm}
  \item[(ii)] $\g=d(1-1/q)$ and $1<q\le 2$ with $\a+\g\not\in \N$;

 \vspace{2mm}  \item[(iii)] $\g=d(1-1/q)$ and $q=\infty$ with $\a+\g\not\in \N$;

 \vspace{2mm} \item[(iv)]  $0<\g<d\(1-1/q\)_+$.

\end{itemize}

\noindent Thus, in the case $0<p\le 1$ it remains to consider:

 \vspace{2mm}
(v) $\g=d\(1-1/q\)\ge 1$, $\a+\g\not\in \N$, and $2<q<\infty$
(note that for such $q$ and $d\ge 2$ we always have $d(1-1/q)\ge 1$);

 \vspace{2mm}
(vi) $\g=d\(1-1/q\)\ge 1$, $1<q\le \infty$, and $\a+\g\in \N$;

\vspace{2mm}
(vii) $0<\g=d\(1-\frac1q\)_+=1$ and $d=1$.


To obtain~\eqref{eqlemMM1} in the cases (v)--(vii), we will use the Hardy-Littlewood-Nikolskii inequalities proved in Subsection~\ref{sec5} and the realizations of the $K$-functionals from Property~\ref{pr10}.


Let $P_\s\in \mathcal{B}_{\s,p}$ be such that
\begin{equation}\label{eqth1.1Kd0--}
\Vert f-P_\s\Vert_p\lesssim E_\s(f)_p.
\end{equation}
From~\cite[Lemma 4.2]{diti} and Jackson's inequality~\eqref{JacksonSO-pr}, 
it follows that 
\begin{equation}\label{eqth++}\begin{split}
\Vert f-P_{2^n}\Vert_q
&\lesssim
 \(\sum_{\nu=n}^\infty{2^{\nu q_1
d(\frac1p-\frac1q)}}\Vert
f-P_{2^{\nu}}\Vert_p^{q_1}\)^{\frac1{q_1}}\\
&\lesssim
 \(\sum_{\nu=n}^\infty{2^{\nu q_1
d(\frac1p-\frac1q)}}\w_{\a+m}(f,2^{-\nu})_p^{q_1}\)^{\frac1{q_1}}.
\end{split}
\end{equation}
Thus, taking into account realization~\eqref{eqvwithbest-pr} as well as (\ref{eqth1.1Kd0--}) and (\ref{eqth++}), we obtain
\begin{equation}\label{GGGI}
\begin{split}
\w_{\a}(f,2^{-n})_q
        &\lesssim \Vert f-P_{2^{n}}\Vert_q+2^{-\a n}\sup_{|\zeta|=1,\,\zeta\in\R^d}\Vert D_{\zeta}^{\a} P_{2^n}\Vert_{q}\\
        &\lesssim \(\sum_{\nu=n}^\infty \({2^{\nu
d(\frac1p-\frac1q)}}\w_{\a+m}(f,2^{-\nu})_p\)^{q_1}\)^{\frac1{q_1}}+2^{-\a n}\sup_{|\zeta|=1,\,\zeta\in\R^d}\Vert D_{\zeta}^{\a} P_{2^n}\Vert_{q}.
\end{split}
 \end{equation}

Now, considering the case (v), we estimate the first summand in the last inequality using Lemma~\ref{lemma+} and~\eqref{eqvwithbest-pr}. We derive
\begin{equation}\label{GGGI1}
  \begin{split}
    \sup_{|\zeta|=1,\,\zeta\in\R^d}\Vert D_{\zeta}^{\a+\g} P_{2^n}\Vert_{q}\lesssim 2^{\a n} 2^{d(\frac1p-1)n}n^{\frac1q}\w_{\a+\g}(f,2^{-n})_p+\Vert P_{2^n}\Vert_q.
  \end{split}
\end{equation}
To estimate $\Vert P_{2^n}\Vert_q$, we use the same arguments as in the proof of Lemma~\ref{th1UMMod1} and Corollary~\ref{cor13.18}.  We have
\begin{equation}\label{GGGI2}
  \begin{split}
    \Vert P_{2^n}\Vert_q&\lesssim \Vert f\Vert_q + E_{2^n}(f)_q\lesssim \Vert f\Vert_q\\
    &\lesssim \(\int_0^1 \(\frac{\w_{\a+\g}(f,t)_p}{t^{d(\frac1p-\frac1q)}}\)^{q_1}\frac{dt}{t}\)^\frac1{q_1}+\Vert f\Vert_p\\
       &\lesssim \(\int_0^\d\(\frac{\w_{\a+m}(f,t)_p}{t^{d(\frac1p-\frac1q)}}\)^{q_1}\frac{dt}{t}\)^\frac1{q_1}+
       \(\int_\d^1\(\frac{\w_{\a+\g}(f,t)_p}{t^{\a+d(\frac1p-\frac1q)}}\)^{q_1}\frac{dt}{t}+\Vert f\Vert_p^{q_1}\)^\frac1{q_1}\\
&\lesssim \left(\int_0^\d\bigg(\frac{\w_{\a+m}(f,t)_p}{t^{d(\frac1p-\frac1q)}}\bigg)^{q_1}\frac{dt}{t}\right)^{\frac1{q_1}}+\Vert f\Vert_p+\frac{\w_{\a+\g}(f,\d)_p}{\d^{\g+d(\frac1p-1)}}\ln^{\frac 1{q_1}}\(\frac1\d+1\).
\end{split}
\end{equation}
Thus, combining~\eqref{GGGI}--\eqref{GGGI2}, we arrive at~\eqref{eqlemMM1} in the case (v).

To consider the case (vi), we use similar arguments applying Lemma~\ref{lemPolSob}. Remark that in this case we do not need to estimate $\Vert f\Vert_q$.




In the case (vii), the required estimate follows from the equality
$$
P_\s(x)=\int_{-\infty}^x P_\s'(t)dt.
$$
Note that by the Nikolskii inequality~\eqref{eqNIKNIK}, $P_\s\in L_1(\R^d)$ whenever $P_\s\in L_p(\R^d)$ and using~\cite[Theorem~3.2.5]{nikol-book} we have $P(x)\to 0$ as $x\to \infty$. See also~\cite[p.~118]{PP}.

%

Let $1<p<q< \infty$. In this case the proof is similar. First, we use inequality~\eqref{qqqqqqqqqqqqqqwwwwwwwwwwwwwwweeeeeeeeeeeeerrrrrrrrrrr} as follows
\begin{equation*}
\begin{split}
            \w_{\a}(f,\d)_q&\lesssim \left(\int_0^\d\bigg(\frac{\w_{\a+m}(f,t)_p}{t^{d(\frac1p-\frac1q)}}\bigg)^{q_1}\frac{dt}{t}\right)^{\frac1{q_1}}+\d^\a\Vert f\Vert_p\\
&+\frac {\w_{\a+\g}(f,\d)_p}{\d^{\g}}\left\{
                                          \begin{array}{ll}
                                            1, & \hbox{$\g>d\(\frac1p-\frac1q\)$;} \\
                                            \ln^{1/\theta}\(\frac1\d+1\), & \hbox{$\g=d\(\frac1p-\frac1q\)$;} \\
                                            \(\frac1\d\)^{d(\frac1p-\frac1q)-\g}, & \hbox{$0<\g<d\(\frac1p-\frac1q\)$.}
                                          \end{array}
                                        \right.
\end{split}
\end{equation*}

Hence, \eqref{eqlemMM1} holds in all cases except the case $\g=d(\frac1p-\frac1q)$.
In the latter case, if $q<\infty$,  the Hardy-Littlewood inequality $\Vert f\Vert_q\lesssim \Vert (-\Delta)^{\g/2} f\Vert_p$ and~\eqref{GGGI} yield~\eqref{eqlemMM1}.  Indeed,  by Corollary~\ref{lemRAZZZ}, we have that
$$
\sup_{|\zeta|=1,\,\zeta\in\R^d}
\Vert D_{\zeta}^{\a} P_\s\Vert_{q}\asymp \Vert (-\D)^{\a/2}P_\s\Vert_q
$$
and
$$
\sup_{|\zeta|=1,\,\zeta\in\R^d}
\Vert D_\zeta^{\a+\g} P_\s\Vert_{p}\asymp \Vert (-\D)^{(\a+\g)/2}P_\s\Vert_p.
$$
It remains to apply~\eqref{eqvwithbest-pr}.

Finally, to show \eqref{eqlemMM1} in the case $1<p<q=\infty$ and $\g=d/p$, we note that by Lemma~\ref{lemma+inf} and~\eqref{eqvwithbest-pr}, we derive
$$
    \sup_{|\zeta|=1,\,\zeta\in\R^d}\Vert D_{\zeta}^{\a} P_{2^n}\Vert_{\infty}\lesssim 2^{(\a+\g) n} n^{\frac1{p'}}\w_{\a+\g}(f,2^{-n})_p+\Vert f\Vert_p.
$$
This together with~\eqref{GGGI} implies~\eqref{eqlemMM1}. 
\end{proof}

\begin{corollary}\label{important corollary} Let
 $0<p\le 1<q\le\infty$ and $d\ge 2$. We have
\begin{equation*}
  \w_\a(f,\d)_q\lesssim \(\int_0^\d   \(
   \frac{\w_{\a+d(1-\frac1q)}(f,t)_p}{t^{d(\frac1p-\frac1q)}}
 \)^{q_1}\frac {dt}{t}\)^\frac 1{q_1}
\end{equation*}
   provided that $\a+d(1-1/q)\in\N$.
In particular, for any $d, \a\in\N$,
     we have
    \begin{equation*}
        \w_\a(f,\d)_\infty\lesssim
        \int_0^\d   \frac{\w_{\a+d}(f,t)_1}{t^{d}} \frac {dt}{t}.
    \end{equation*}
\end{corollary}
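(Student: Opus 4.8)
The plan is to obtain the corollary as the clean special case $\g=d(1-1/q)$ of Theorem~\ref{thMainMod}. Under the hypotheses $0<p\le 1<q\le\infty$, $d\ge 2$ and $\a+\g\in\N$ with $\g=d(1-1/q)\ge 1$, this is exactly the situation in which the function $\eta$ of~\eqref{ulpr1} reduces to $t^{d(1/p-1)}$ (its second alternative), with no logarithmic factor. The starting observation is the elementary identity $\g+d(\tfrac1p-1)=d(\tfrac1p-\tfrac1q)$, which gives
\[
\frac{\w_{\a+\g}(f,t)_p}{t^{\g}}\,\eta\Big(\tfrac1t\Big)=\frac{\w_{\a+\g}(f,t)_p}{t^{\,d(\frac1p-\frac1q)}},
\]
so that the weight in the target integral is the correct one; I write $\beta:=\a+\g$ for brevity.

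I would then apply Theorem~\ref{thMainMod} with, say, $m:=\g+1$ (admissible, since $m-\g=1\in\N$ and $\a+m=\beta+1\in\N$), which produces a bound by $\d^\a\Vert f\Vert_p$, a boundary term $\w_{\beta}(f,\d)_p/\d^{d(1/p-1/q)}$, and an integral involving $\w_{\a+m}$. The reverse Marchaud inequality~\eqref{propodmod} gives $\w_{\a+m}(f,t)_p\lesssim\w_{\beta}(f,t)_p$, so the integral is dominated by the same expression with $\w_{\beta}$. To absorb the boundary term I would invoke the monotonicity Property~\ref{pr4}: since $t\mapsto\w_{\beta}(f,t)_p/t^{\beta+d(1/p-1)}$ is almost decreasing, for $0<t\le\d$ one has $\w_{\beta}(f,t)_p/t^{d(1/p-1/q)}\gtrsim\big(\w_{\beta}(f,\d)_p/\d^{d(1/p-1/q)}\big)(t/\d)^{\a}$, and raising this to the power $q_1$ and integrating over $(0,\d)$ — convergent because $\a>0$ — recovers the boundary term. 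Thus both the boundary term and the $\w_{\a+m}$-integral are controlled by $\big(\int_0^\d(\w_{\beta}(f,t)_p\,t^{-d(1/p-1/q)})^{q_1}\,dt/t\big)^{1/q_1}$.

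The only genuinely delicate point — and the heart of the matter — is the removal of $\d^\a\Vert f\Vert_p$, which is precisely why the hypotheses $d\ge 2$, $\g\ge 1$ and $\beta\in\N$ are imposed. In the proof of Theorem~\ref{thMainMod} this is case~(vi): the directional derivative of a near-best approximant $P_{2^n}$ (with $\Vert f-P_{2^n}\Vert_p\lesssim E_{2^n}(f)_p$) is estimated by the sharp Hardy--Littlewood--Nikolskii inequality~\eqref{eqlemMM2Sob} of Lemma~\ref{lemPolSob}, namely $\sup_{|\zeta|=1}\Vert D_\zeta^{\a}P_{2^n}\Vert_q\lesssim 2^{d(1/p-1)n}\sup_{|\zeta|=1}\Vert D_\zeta^{\beta}P_{2^n}\Vert_p$, which carries no additive $\Vert P_{2^n}\Vert_q$ term. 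Combined with the realization~\eqref{eqvwithbest-pr} this yields only the boundary term $\w_{\beta}(f,\d)_p/\d^{d(1/p-1/q)}$, so that $\Vert f\Vert_q$ — and hence $\Vert f\Vert_p$ — never enters. This is in sharp contrast with case~(v) ($\beta\notin\N$), where Lemma~\ref{lemma+} must be used and its extra summand $\Vert P_{2^n}\Vert_q$ forces one to bound $\Vert f\Vert_q$, thereby producing the $\Vert f\Vert_p$ term. Checking that Lemma~\ref{lemPolSob} indeed applies cleanly for the present parameters is the step I expect to require the most care; everything else is the exponent bookkeeping above.

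Finally, the displayed particular case follows by specializing to $p=1$, $q=\infty$ and $\a,d\in\N$: then $\g=d$, $q_1=1$ and $d(\tfrac1p-\tfrac1q)=d$, so $\beta=\a+d\in\N$ and the general estimate collapses to $\w_\a(f,\d)_\infty\lesssim\int_0^\d \w_{\a+d}(f,t)_1\,t^{-d}\,\tfrac{dt}{t}$.
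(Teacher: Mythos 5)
Your proposal is correct and follows essentially the same route as the paper: the corollary is the case $\g=d(1-1/q)$ of Theorem~\ref{thMainMod}, with the boundary term absorbed into the integral via the almost-monotonicity of Property~\ref{pr4} and the $\d^\a\Vert f\Vert_p$ term removed exactly as in case (vi) of the theorem's proof, i.e., via Lemma~\ref{lemPolSob} together with the realization~\eqref{eqvwithbest-pr}. Your choice $m=\g+1$ followed by~\eqref{propodmod} is a harmless variant of the paper's $m=\g$, and your explicit restriction to $\g=d(1-1/q)\ge 1$ correctly pins down the hypothesis under which Lemma~\ref{lemPolSob} (and hence the removal of $\Vert f\Vert_p$) actually applies.
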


\bigskip

Finally, we note that unlike the case of Lebesgue spaces, the sharp Ulyanov inequality in Hardy spaces has the same form both in quasi-Banach spaces ($0<p<1$) and Banach spaces ($p\ge 1$).
Recall that the real Hardy spaces $H_p(\R^d)$,
$0<p<\infty$,\index{\bigskip\textbf{Spaces}!$H_p(\R^d)$}\label{HPR} is the class of tempered distributions  $f\in
\mathscr{S}'(\R^d)$ such that
$$
\Vert f\Vert_{H_p}=\Vert f\Vert_{H_p(\R^d)}=\big\Vert
\sup_{t>0}|\vp_t*f(x)|\big\Vert_{L_p(\R^d)}<\infty,
$$
where $\vp\in \mathscr{S}(\R^d)$, $\widehat{\vp}(0)\neq 0$, and
$\vp_t(x)=t^{-d}\vp(x/t)$ (see~\cite[Ch. III]{Stein93}).
The 
moduli of smoothness in $H_p(\R^d)$ are defined by
$
\w_\a(f,\d)_{H_p}:=\sup_{|h|<\d}\Vert \D_h^\a f\Vert_{H_p},
$\index{\bigskip\textbf{Functionals and functions}!$\w_\a(f,\d)_{H_p}$, modulus of smoothness in the Hardy spaces}\label{MODAHP}
where the fractional difference $\D_h^\a f$ is given by~\eqref{def-mod++}.
Using boundedness properties  of the  fractional integrals in the Hardy spaces,  the following sharp Ulyanov inequality in~$H_p(\R^d)$ was derived in~\cite{KT19m}.

\begin{theorem}\label{realHpsharpURd}  Let $f\in H_p(\R^d)$, $0<p<q<\infty$, $\a\in \N \cup ((1/q-1)_+,\infty)$ and $\a+\t \in \N \cup ((1/p-1)_+,\infty)$, and $\theta=d(1/p-1/q)$. Then, for any  $\d\in (0,1)$, we have
    \begin{equation*}
        \w_\a(f,\d)_{H_q}\lesssim
        \left(\int_0^\d\bigg(\frac{\w_{\a+\theta}(f,t)_{H_p}}{t^{\theta}}\bigg)^q\frac{dt}{t}\right)^{\frac1q}.
    \end{equation*}
\end{theorem}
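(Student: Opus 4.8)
The plan is to run the standard machinery of approximation by entire functions of exponential type, now in the scale of Hardy spaces, where the crucial simplification is that the Riesz potential $I_\theta=(-\D)^{-\theta/2}$ with $\theta=d(1/p-1/q)$ maps $H_p(\R^d)$ boundedly into $H_q(\R^d)$ for \emph{all} $0<p<q<\infty$. It suffices to prove the estimate for dyadic $\d=2^{-n}$, the general case following from the monotonicity of $\w_\a(f,\cdot)_{H_q}$ together with the routine equivalence between the dyadic sum $\big(\sum_{\nu\ge n}2^{\nu q\theta}\w_{\a+\theta}(f,2^{-\nu})_{H_p}^q\big)^{1/q}$ and the integral on the right-hand side (one uses that $\w_{\a+\theta}(f,t)_{H_p}$ is nondecreasing in $t$ and the doubling property, the analogue of Property~\ref{pr4}). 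Fix $n$ and choose a near-best approximant $P_{2^n}$ of exponential type $2^n$ with $\Vert f-P_{2^n}\Vert_{H_p}\lesssim E_{2^n}(f)_{H_p}$, where $E_\s(\cdot)_{H_p}$ denotes best approximation in the $H_p$ metric.

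First I would bound the modulus from above by a realization-type quantity: using the quasi-triangle inequality for $\w_\a(\cdot,\d)_{H_q}$, the trivial bound $\w_\a(g,\d)_{H_q}\lesssim\Vert g\Vert_{H_q}$, and the Nikol'skii--Stechkin--Boas equivalence (the $H_q$ analogue of Corollary~\ref{corNSB}),
\begin{equation*}
\w_\a(f,2^{-n})_{H_q}\lesssim \Vert f-P_{2^n}\Vert_{H_q}+2^{-\a n}\sup_{|\zeta|=1}\Vert D_\zeta^\a P_{2^n}\Vert_{H_q}.
\end{equation*}
The first term is handled exactly as in~\eqref{eqth++}: the monotonicity of best approximations, the Nikol'skii inequality $\Vert P_\s\Vert_{H_q}\lesssim\s^{\theta}\Vert P_\s\Vert_{H_p}$ in Hardy spaces, and Jackson's inequality $E_\s(f)_{H_p}\lesssim\w_{\a+\theta}(f,1/\s)_{H_p}$ give
\begin{equation*}
\Vert f-P_{2^n}\Vert_{H_q}\lesssim\Big(\sum_{\nu\ge n}2^{\nu q\theta}E_{2^\nu}(f)_{H_p}^q\Big)^{1/q}\lesssim\Big(\sum_{\nu\ge n}2^{\nu q\theta}\w_{\a+\theta}(f,2^{-\nu})_{H_p}^q\Big)^{1/q}.
\end{equation*}

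The decisive step is the derivative term, where the Hardy-space Hardy--Littlewood--Sobolev inequality enters. Using the $H$-space analogue of Corollary~\ref{lemRAZZZ}, then factoring $(-\D)^{\a/2}=I_\theta(-\D)^{(\a+\theta)/2}$ and invoking boundedness of $I_\theta:H_p\to H_q$, and finally the Nikol'skii--Stechkin--Boas equivalence in $H_p$,
\begin{equation*}
\sup_{|\zeta|=1}\Vert D_\zeta^\a P_{2^n}\Vert_{H_q}\asymp\Vert(-\D)^{\a/2}P_{2^n}\Vert_{H_q}\lesssim\Vert(-\D)^{(\a+\theta)/2}P_{2^n}\Vert_{H_p}\asymp 2^{(\a+\theta)n}\w_{\a+\theta}(P_{2^n},2^{-n})_{H_p}.
\end{equation*}
Since $\w_{\a+\theta}(P_{2^n},2^{-n})_{H_p}\lesssim\w_{\a+\theta}(f,2^{-n})_{H_p}+\Vert f-P_{2^n}\Vert_{H_p}\lesssim\w_{\a+\theta}(f,2^{-n})_{H_p}$ by Jackson, the derivative term is $\lesssim 2^{\theta n}\w_{\a+\theta}(f,2^{-n})_{H_p}$, which is dominated by the $\nu=n$ summand of the sum above. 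Combining the two estimates and passing from the dyadic sum back to the integral yields the assertion.

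I expect the main obstacle to lie in the Hardy-space toolkit rather than in the assembly: one must have at hand the Jackson and Nikol'skii inequalities, the equivalence between $\sup_{|\zeta|=1}\Vert D_\zeta^\beta P_\s\Vert_{H_p}$ and $\s^\beta\w_\beta(P_\s,1/\s)_{H_p}$, and the realization upper bound, all valid in $H_p$ over the full range $0<p<\infty$, together with the multiplier property ensuring $\sup_{|\zeta|=1}\Vert D_\zeta^\a P\Vert_{H_q}\asymp\Vert(-\D)^{\a/2}P\Vert_{H_q}$. The genuinely decisive input, and the reason the inequality takes the \emph{same} form for $0<p<1$ and $p\ge1$, is that $I_\theta$ is bounded $H_p\to H_q$ with no endpoint or logarithmic loss --- in sharp contrast to the $L_p$ situation of Lemmas~\ref{lemma+}--\ref{lemma+inf}, where the Hardy--Littlewood--Nikol'skii estimates degrade near $p=1$ and $q=\infty$.
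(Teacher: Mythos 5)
Your proposal is correct and follows exactly the route the paper indicates: the paper does not prove Theorem~\ref{realHpsharpURd} here but cites~\cite{KT19m}, noting only that the proof rests on ``boundedness properties of the fractional integrals in the Hardy spaces,'' which is precisely the Hardy--Littlewood--Sobolev input $I_\theta\colon H_p\to H_q$ that you identify as decisive. The surrounding assembly you describe (realization via a near-best approximant, the telescoping estimate as in~\eqref{eqth++}, Jackson and Nikol'skii--Stechkin--Boas in $H_p$) mirrors the paper's own proof of Theorem~\ref{thMainMod} and is how the argument is carried out in~\cite{KT19m}.
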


\medskip

\subsection*{Proof of Property~\ref{Koly}} See~\cite[Theorem~2.6]{Treb}.

The following generalization of~\eqref{eqth3.1Kmod} for Hardy spaces has been recently obtained in~\cite[Theorem~15]{KT19m}.

\begin{theorem}
Let $f\in H_p(\R^d)$, $0<p< q<\infty$,
$\theta=d\(1/p-1/q\)$, and $\a\in \N\cup ((1/p-1)_+,\infty)$, $\a>\t$. Then
\begin{equation*}
    \d^{\a-\theta}\(\int_\d^\infty
\(\frac{\w_\a(f,t)_{H_q}}{t^{\a-\theta}}\)^p\frac{dt}{t}\)^\frac1p \lesssim\(\int_0^\d \(\frac{\w_\a(f,t)_{H_p}}{t^\t}\)^q\frac{dt}{t}\)^\frac1q.
\end{equation*}
\end{theorem}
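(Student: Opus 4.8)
The plan is to transfer the approximation-theoretic proof of the Lebesgue-space Kolyada inequality (Property~\ref{Koly}) to the Hardy-space setting, the only genuinely new ingredient being the Nikol'skii inequality for band-limited functions in Hardy spaces. First I would normalize $\d=2^{-m}$ (this is harmless by the $H_p$-analogue of the monotonicity property~\eqref{eqMonMod}) and discretize both sides dyadically. Writing $\w_\a(f,t)_{H_r}\asymp\w_\a(f,2^{-k})_{H_r}$ for $t\in[2^{-k-1},2^{-k}]$, the inequality to be proved becomes the sequence estimate
\begin{equation*}
2^{-m(\a-\theta)}\Big(\sum_{n\le m}2^{n(\a-\theta)p}\,\w_\a(f,2^{-n})_{H_q}^{p}\Big)^{1/p}\lesssim\Big(\sum_{k>m}2^{k\theta q}\,\w_\a(f,2^{-k})_{H_p}^{q}\Big)^{1/q}.
\end{equation*}
Since $\a>\theta$, the geometric weight $2^{n(\a-\theta)p}$ is summable as $n\to-\infty$, so the left-hand side is effectively localized near the scale $n=m$; this localization is what allows the right-hand side to control it without an extra $\Vert f\Vert$ term.

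Next I would express the $H_q$-modulus through best $H_p$-approximations. Let $Q_{2^k}\in\mathcal{B}_{2^k,p}$ be near-best approximants of $f$ in $H_p$, so that $E_{2^k}(f)_{H_p}\lesssim\w_\a(f,2^{-k})_{H_p}$ by the Hardy-space Jackson inequality. The two tools that carry the argument are: (a) the \emph{Nikol'skii inequality in Hardy spaces}, $\Vert g\Vert_{H_q}\lesssim\s^{d(1/p-1/q)}\Vert g\Vert_{H_p}$ for $g\in\mathcal{B}_{\s,p}$ --- which is the band-limited form of the Riesz-potential embedding $H_p\hookrightarrow H_q$ and is exactly where the Hardy structure (rather than $L_1$) is used; and (b) the $H_q$-analogues of the realization and inverse theorems (Properties~\ref{pr10} and~\ref{pr12}). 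Decomposing $f-Q_{2^n}=\sum_{k\ge n}(Q_{2^{k+1}}-Q_{2^k})$ and measuring each block in $H_q$ by Nikol'skii gives the transference estimate $E_{2^n}(f)_{H_q}\lesssim\big(\sum_{k\ge n}2^{k\theta q_1}E_{2^k}(f)_{H_p}^{q_1}\big)^{1/q_1}$, the Hardy counterpart of~\cite[Lemma~4.2]{diti} used in~\eqref{eqth++}. Feeding this and the inverse theorem in $H_q$ into the left-hand side reduces everything to a double sum in the scalars $b_k:=2^{k\theta}E_{2^k}(f)_{H_p}$.

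The crux is then a purely discrete Hardy-type inequality in which the outer exponent is $p$ and the inner exponent is $q$ with $p<q$. This ``wrong order'' of exponents is precisely what makes Kolyada's inequality delicate: the elementary monotone Hardy inequality does not apply, and one must instead exploit that, by the monotonicity property~\eqref{eqMonMod} in $H_p$, the sequence $2^{k\beta}E_{2^k}(f)_{H_p}$ with $\beta=\a+d(1/p-1)_+$ is almost increasing while $E_{2^k}(f)_{H_p}$ itself is decreasing. Using this near-monotonicity together with a summation-by-parts argument (equivalently, an $\ell^{p/q}$-duality), I would dominate the inner $q$-sums by geometric tails and collapse the double sum to the single $q$-sum on the right, the validity of the collapse resting on $\a>\theta$ and on the geometric spacing of the weights. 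I expect this discrete step to be the main obstacle, with the correct handling of the endpoint balance $\a>\theta$ and of the quasi-triangle inequality in $H_q$ when $q<1$ forming the technical heart.

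Finally, I would record the feature that distinguishes the Hardy case from the Lebesgue case: because the Riesz potential $I_\theta$ maps $H_p$ boundedly into $H_q$ for \emph{all} $0<p<q<\infty$ (in particular at $p=1$, where $H_1\neq L_1$), the argument goes through uniformly and, unlike~\eqref{eqth3.1Kmod}, suffers no $p=1,\,d=1$ obstruction --- which is why the Hardy-space inequality takes the same form across the whole quasi-Banach and Banach ranges.
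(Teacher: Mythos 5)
The paper offers no internal proof of this theorem to compare against: it is imported verbatim from~\cite[Theorem~15]{KT19m}, and the only hint given is that the argument there rests on the boundedness of fractional integrals in Hardy spaces. Judged on its own terms, your outline assembles the right ingredients --- dyadic discretization, the Jackson and inverse theorems, the realization of the $K$-functional, and the band-limited Nikol'skii/Hardy--Littlewood inequality coming from $I_\t\colon H_p\to H_q$ for \emph{all} $0<p<q<\infty$ (which is indeed why the Hardy-space statement has no $p=1$, $d=1$ pathology, in contrast to~\eqref{eqth3.1Kmod}). So the programme is sensible and consistent with the approximation-theoretic route one would expect.

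The genuine gap is that the proposal stops exactly where the proof begins. After the reductions, everything hinges on a discrete inequality in which the outer exponent is $p$ and the inner exponent is $q>p$, and you dispose of it by asserting that ``near-monotonicity plus summation by parts'' collapses the double sum. That step is the entire content of Kolyada's inequality: in every known proof (Kolyada's original sequence lemma~\cite{kol}, Trebels' limiting-interpolation/Holmstedt argument~\cite{Treb}, Goldman's embedding criterion) it requires a nontrivial lemma, and the monotonicity properties you invoke do not obviously suffice. Concretely, your own localization remark already produces the term $\d^{\a-\t}\Vert f\Vert_{H_q}$ from the tail $n\to-\infty$, and bounding this by the \emph{truncated} integral $\big(\int_0^\d(\w_\a(f,t)_{H_p}t^{-\t})^q\,dt/t\big)^{1/q}$ is itself not routine: the extrapolation $\w_\a(f,t)_{H_p}\lesssim (t/\d)^{\b}\w_\a(f,\d)_{H_p}$ with $\b=\a+d(1/p-1)_+$ (the $H_p$-analogue of~\eqref{eqMonMod}) loses a factor $\d^{-d(1/p-1)}$ when $0<p\le 1$, so a naive absorption of the piece $\int_\d^1$ fails and a more careful balance (or a different splitting) is needed. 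Until the discrete ``wrong-order'' inequality is stated and proved --- with the precise monotonicity hypotheses on $E_{2^k}(f)_{H_p}$ that make it true and with the quasi-norm issues for $q<1$ irrelevant here since $q>p>0$ and $q<\infty$ but $q$ may be $\le 1$ --- the argument is a plan, not a proof.
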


\medskip

\subsection*{Proof of Property~\ref{pr8}}

See~\cite[Theorem~12.2]{DoTi} for the proof of inequality~\eqref{inequalTrebels2} and~\cite[Theorem 2.3]{Treb} for inequality~\eqref{inequalTrebels1}.
%
%
%

\subsection*{Proof of Property~\ref{pr11}}
The proof of the Jackson inequality~\eqref{JacksonSO-pr} in the multidimensional case is based on the following lemma.


\begin{lemma}\label{ledr}{\sc (See \cite{BRS09}.)}
Let $f\in L_p(\R)$, $0<p<1$, $r\in \N$, and $\s>0$. Then
\begin{equation*}
  \frac1{2\s}\int_{[-\s,\s]} \Vert f-V_{\s,\l}(f)\Vert_{L_p(\R)}^p d\l\lesssim \w_{2r+1}(f,\s^{-1})_{L_p(\R)}^p
\end{equation*}
and
\begin{equation*}
  \frac1{2\s}\int_{[-\s,\s]} \Vert V_{\s,\l}(f)\Vert_{L_p(\R)}^p d\l\lesssim \Vert f\Vert_{L_p(\R)}^p,
\end{equation*}
where
$$
V_{\s,\l}(f)(x)=2\pi \sum_{k\in \Z} f\(\frac k\s+\l\) (\mathcal{F}\vp)(\s(x-\l)-k),\quad
\vp(\xi)=(1+i\xi^{2r+1})\eta(\xi),
$$
and $\eta\in C^\infty(\R)$, $\eta(\xi)=1$ for $|\xi|\le 1/2$, and $\eta(\xi)=0$ for $|x|\ge 1$.
\end{lemma}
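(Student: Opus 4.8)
The plan is to treat the two estimates separately, in both cases exploiting the $p$-subadditivity $\Vert u+v\Vert_{L_p(\R)}^p\le \Vert u\Vert_{L_p(\R)}^p+\Vert v\Vert_{L_p(\R)}^p$ valid for $0<p<1$, the rapid decay of the window $\mathcal{F}\vp$ (a Schwartz function, since $\vp\in C^\infty$ has compact support, hence lying in every $L_p(\R)$), and an averaging-in-$\l$ argument that converts discrete samples back into integrals. A dilation $f\mapsto f(\s\,\cdot)$ reduces both inequalities to the normalized case $\s=1$, so I would fix $\s=1$ from the outset. Since individual samples $f(k+\l)$ are meaningless for a general $L_p$ function, I would first prove both estimates for $f$ in a dense subclass (say, Schwartz functions, which are continuous and dense in $L_p(\R)$ for $0<p<\infty$), and then extend by density using the two inequalities themselves, the left-hand sides being genuine $L_p$-averages.

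For the boundedness estimate, fix $\l$ and apply $p$-subadditivity pointwise in $x$ to the defining series, obtaining
\[
|V_{1,\l}(f)(x)|^p\lesssim \sum_{k\in\Z} |f(k+\l)|^p\,|(\mathcal{F}\vp)(x-\l-k)|^p .
\]
Integrating in $x$ and using $\int_\R |(\mathcal{F}\vp)(x-\l-k)|^p\,dx=\Vert \mathcal{F}\vp\Vert_{L_p(\R)}^p$ gives $\Vert V_{1,\l}(f)\Vert_{L_p(\R)}^p\lesssim \sum_k |f(k+\l)|^p$. Averaging over $\l$ and applying Fubini, $\frac12\int_{-1}^1\sum_k |f(k+\l)|^p\,d\l=\frac12\sum_k\int_{k-1}^{k+1}|f(u)|^p\,du\lesssim \Vert f\Vert_{L_p(\R)}^p$, since each point is counted boundedly many times; this is the second inequality.

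For the error estimate — the main obstacle — I would produce a representation of $f-V_{1,\l}(f)$ as a rapidly convergent superposition of $(2r+1)$-th order differences of $f$. The mechanism is Poisson summation: writing the sampling series on the Fourier side, $V_{1,\l}(f)$ carries a symbol built from $\vp(-\xi)=(1-i\xi^{2r+1})\eta(\xi)$ together with the aliasing terms $\sum_{m\ne 0}\widehat f(\xi+2\pi m)$. The role of the factor $1+i\xi^{2r+1}$ in $\vp$ is precisely that, after accounting for the reproduction defect and the aliasing, the transform of $f-V_{1,\l}(f)$ is a multiple of a $(e^{ih\xi}-1)^{2r+1}$-type factor acting on $\widehat f$, so that on the physical side $f-V_{1,\l}(f)$ becomes a sum $\sum_k c_k(\cdot)\,\D_{h_k}^{2r+1}f(\cdot)$ with steps $|h_k|\lesssim 1$ and $\ell_p$-summable, rapidly decaying coefficients $c_k$. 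Taking $L_p(\R)$ quasi-norms, applying $p$-subadditivity, and then averaging in $\l$ exactly as in the boundedness step collapses the right-hand side to $\sup_{|h|\le 1}\Vert \D_h^{2r+1}f\Vert_{L_p(\R)}^p=\w_{2r+1}(f,1)_{L_p(\R)}^p$, which after undoing the dilation is the claimed bound.

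The hardest part is establishing this difference representation rigorously in $L_p$ with $0<p<1$: one must justify the Poisson/summation manipulations and the convergence of the resulting series in a quasi-Banach setting where duality and the ordinary triangle inequality are unavailable. Here I would lean on the convolution inequalities for band-limited functions in $L_p(\R)$, $0<p<1$ (the same $L_p$-algebra estimates from \cite{SS} used in the proof of Property~\ref{pr4}) to control tails and legitimise the term-by-term passage, and on the explicit algebraic identity matching $1+i\xi^{2r+1}$ to the symbol of $\D^{2r+1}$ that pins the difference order to exactly $2r+1$.
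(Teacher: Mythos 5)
The paper does not actually prove this lemma; it is imported verbatim from \cite{BRS09}, so there is no internal argument to compare yours with. On its own terms, your proof of the second (boundedness) inequality is correct and complete: pointwise $p$-subadditivity of the defining series, $\int_\R |(\mathcal{F}\vp)(x-\l-k)|^p\,dx=\Vert \mathcal{F}\vp\Vert_p^p<\infty$, and the identity $\tfrac12\sum_k\int_{k-1}^{k+1}|f|^p=\Vert f\Vert_p^p$ is exactly the standard argument, and the dilation to $\s=1$ together with the density scaffolding (which uses the boundedness inequality to pass to the limit) is sound.

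The error estimate, however, has a genuine gap, and the route you sketch cannot be completed in the order you propose. You plan to fix $\l$, represent $f-V_{1,\l}(f)$ as an $\ell_p$-summable superposition $\sum_k c_k\,\D_{h_k}^{2r+1}f$ with $|h_k|\lesssim 1$, take $L_p$ quasi-norms via $p$-subadditivity for that fixed $\l$, and only then average in $\l$. Any such argument establishes the fixed-$\l$ bound $\Vert f-V_{1,\l}(f)\Vert_p^p\lesssim \sup_{|h|\le 1}\Vert \D_h^{2r+1}f\Vert_p^p$, and that bound is false: let $f$ be a smooth bump of height $1$ and width $\e\ll 1$. Then $\w_{2r+1}(f,1)_p^p\lesssim \Vert f\Vert_p^p\asymp\e$, yet for every $\l$ for which some node $k+\l$ falls near the center of the bump one has $V_{1,\l}(f)\approx 2\pi(\mathcal{F}\vp)(\cdot-\l-k)$, whence $\Vert f-V_{1,\l}(f)\Vert_p^p\gtrsim \Vert\mathcal{F}\vp\Vert_p^p\asymp 1$. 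The lemma survives only because such $\l$ form a set of measure $\asymp\e$; the $\l$-average must therefore act on the sampled quantities \emph{inside} the estimate (as it does in your boundedness proof), not after the quasi-norm in $x$ has been taken. The same example undermines the Poisson-summation decomposition itself: the ``main term'' $\mathcal{F}^{-1}(\vp\widehat f)$ is a convolution operator, and for $0<p<1$ the estimate $\Vert f-\mathcal{F}^{-1}(\vp\widehat f)\Vert_p\lesssim\w_{2r+1}(f,1)_p$ already fails (a bump of width $\e$ gives $\Vert \mathcal{F}^{-1}(\vp\widehat f)\Vert_p^p\asymp\e^{p}\gg\e$), which is precisely why shifted sampling operators averaged over $\l$ are used in this range of $p$ instead of convolutions. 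So the aliasing terms are not a perturbation of a controlled main term, and the $L_1$-multiplier identity matching $1+i\xi^{2r+1}$ to the symbol of $\D^{2r+1}$ does not by itself reduce the problem. Closing the argument requires interlacing the average over $\l$ with local (Whitney/Taylor-type) estimates for $f$ at the nodes $k+\l$ throughout, which is the substance of the proof in \cite{BRS09} and is the part your sketch leaves unproved.
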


%

\begin{proof}[Proof of inequality~\eqref{JacksonSO-pr}]

We consider only the case $0<p<1$. The proof of the inequality in the case $p\ge 1$ can be found, e.g.,  in~\cite[5.3.2]{timan}.

First, we consider the case $\a=r\in \N$. 
Denote
$$
V_{\s,\l,j}(f)(x)=2\pi \sum_{k\in \Z} f\(x_1,\dots,x_{j-1},\frac k\s+\l,x_{j+1},\dots,x_d\) (\mathcal{F}\vp)(\s(x_j-\l)-k).
$$
Then, we have
\begin{equation*}
  \begin{split}
      E_{d^{-\frac12}\s}(f)_p^p \le \frac1{(2\s)^d}\int_{[-\s,\s]^d} \Vert f- V_{\s,\l_1,1} \circ\dots\circ V_{\s,\l_d,d}(f)\Vert_p^p d\l_1\dots d\l_d.
   \end{split}
\end{equation*}
Using the equality
\begin{equation*}
  \begin{split}
     f- &V_{\s,\l_1,1} \circ\dots\circ V_{\s,\l_d,d}(f)=f-V_{\s,\l_1,1}(f)\\
     &+V_{\s,\l_1,1}(f-V_{\s,\l_2,2}(f))+\dots + V_{\s,\l_1,1} \circ\dots\circ V_{\s,\l_{d-1},d-1}(f-V_{\s,\l_d,d}(f)),
   \end{split}
\end{equation*}
Lemma~\ref{ledr}, and Fubini's theorem, we derive
\begin{equation*}
  \begin{split}
     &\frac1{(2\s)^d}\int_{[-\s,\s]^d} \Vert f- V_{\s,\l_1,1} \circ\dots\circ V_{\s,\l_d,d}(f)\Vert_p^p d\l_1,\dots d\l_d\\
     &\le \frac1{2\s}\int_{[-\s,\s]} \Vert f- V_{\s,\l_1,1}(f)\Vert_p^p d\l_1+\frac1{(2\s)^2}\int_{[-\s,\s]^2} \Vert V_{\s,\l_1,1}(f-V_{\s,\l_2,2}(f))\Vert_p^p d\l_1 d\l_2\\
     &\qquad\quad+\dots+\frac1{(2\s)^d}\int_{[-\s,\s]^d} \Vert  V_{\s,\l_1,1} \circ\dots\circ V_{\s,\l_{d-1},d-1}(f-V_{\s,\l_d,d}(f)) \Vert_p^p d\l_1\dots d\l_d\\
     &\lesssim \sum_{j=1}^d \frac1{2\s} \int_{[-\s,\s]} \Vert f- V_{\s,\l_j,j}(f)\Vert_p^p d\l_j\lesssim \sum_{j=1}^d \w_{2r+1}^{(j)} (f,\s^{-1})_p^p,
   \end{split}
\end{equation*}
where  $\w_{2r+1}^{(j)} (f,\s^{-1})_p$ is the partial modulus of smoothness given in~\eqref{moddd}.

Thus, using the fact that
$\w_{2r+1}^{(j)}(f,d^{1/2}\d)_p\lesssim \w_{2r+1}^{(j)}(f,\d)_p$, see, e.g.,~\cite[p.~370]{DL}, and relations~\eqref{propodmod} and~\eqref{mix1},
we have
\begin{equation*}
  E_{\s}(f)_p\lesssim \sum_{j=1}^d \w_{2r+1}^{(j)}(f,d^\frac12\s^{-1})_p\lesssim   \sum_{j=1}^d \w_{r}^{(j)}(f,\s^{-1})_p   \lesssim \w_{r}(f,\s^{-1})_p.
\end{equation*}
Finally, using the above estimate for $r=\a+k\in \N$ with $k>(1/p-1)_+$ and Property~\ref{pr6+}, we get
$$
  E_\s(f)_p\lesssim \w_{\a+k}\(f,\s^{-1}\)_p\lesssim \w_{\a}\(f,\s^{-1}\)_p.
$$
\end{proof}


\subsection*{Proof of Property~\ref{pr12}}
The proof of~\eqref{eqconverseMod} follows the standard argument using the Littlewood-Paley decomposition in the case $1<p<\infty$, see~\cite{ddt}, and telescoping sums in the cases $0<p\le 1$ and $p=\infty$, see, e.g.,~\cite{KT19m}. Since we deal with fractional moduli of smoothness, we also need the inequality
$\w_\a(P_\s,\d)_p\lesssim \s^\a \Vert P_\s\Vert_p$, $0<\d\le {\pi}/{\s}$,
which follows from~\eqref{ineqNS3cor} and~\eqref{ineqNS3corBEr}.\hfill$\square$



\subsection*{Proof of Property~\ref{Rathor}}
In the case $0<p<1$, the property can be proved repeating step-by-step the proof of Theorem~1 from~\cite{K07} using also inequalities~\eqref{Run0}, \eqref{JacksonSO-pr}, and~\eqref{eqconverseMod}. For the case $p\ge 1$ see~\cite[Theorem~8.2]{GIT}.
\hfill$\square$

\subsection*{Proof of Property~\ref{pr13}}
%
%
In the case $0<p<1$, the proof of the first inequality in~\eqref{eq7R} easily follows from~\eqref{JacksonSO-pr} and~\eqref{ineqNS3}:
\begin{equation*}
  2^{-n\a} \sup_{|\zeta|=1,\, \zeta\in \R^d}\Vert D_\zeta^\a P_{2^n} \Vert_p
\lesssim \omega_\a(P_{2^n},2^{-n})_{p}\lesssim \Vert f-P_{2^n}\Vert_p+\omega_\a(f,2^{-n})_{p}\lesssim \omega_\a(f,2^{-n})_{p}.
\end{equation*}
To obtain the second inequality in~\eqref{eq7R}, one needs to apply~\cite[Theorem~2.1]{KT19a}, taking into account inequalities~\eqref{JacksonSO-pr} and~\eqref{ineqNS3}.

For the case $1<p<\infty$ see~\cite[Theorem~6.1]{KT19a}.\hfill$\square$



\subsection*{Proof of Property~\ref{pr9}}
The estimate $\gtrsim$ follows from the realization result given in~\eqref{eq.th6.0d-pr}. To obtain the estimate $\lesssim$, we use the fact that the function
$$
m(\xi)=\(\frac{1-e^{i(\xi_1+\dots+\xi_d)} }{\xi_1+\dots+\xi_d}\)^\a
$$
is a Fourier multipliers in $L_p(\R^d)$ for all $1\le p\le \infty$, see, e.g.,~\cite[Proposition~1]{Wil}. Then by standard arguments, we derive for any function $g\in L_p(\R^d)$
\begin{equation*}
  \begin{split}
      {\omega}_\a(f,\d)_p \lesssim {\omega}_\a(f-g,\d)_p + {\omega}_\a(g,\d)_p  \lesssim \Vert f-g \Vert_p+\d^\a\sup_{|\zeta|=1,\, \zeta\in \R^d}\Vert D_\zeta^\a g \Vert_p.
   \end{split}
\end{equation*}
Taking the infimum over all $g$, we obtain the estimate $\lesssim$ in~\eqref{eq.th6.0d++pr}.\hfill$\square$

\subsection*{Proof of Property~\ref{pr10}}

First, let us prove the estimate from above in~\eqref{eq.th6.0d-pr}. Let $P_\s\in\mathcal{B}_{\s,p}$, $\d\in (0,1/\s)$.
By Corollary~\ref{corNSB}, we estimate
\begin{equation*}
  \begin{split}
      \w_\a(f,\d)_p&\lesssim \Vert f-P_\s\Vert_p+\w_\a(P_\s,\d)_p\\
      &\lesssim \Vert f-P_\s\Vert_p+\d^{\a}\sup_{\zeta\in \R^d,\,|\zeta|=1}\Vert D_{\zeta}^{\a} P_\s\Vert_{p}.
   \end{split}
\end{equation*}
It remains to take infimum over all  $P_\s\in\mathcal{B}_{\s,p}$.

Now let $P_\s\in \mathcal{B}_{\s,p}$ be such that $\Vert f-P_\s\Vert_p\lesssim E_\s(f)_p$.
Then by the Jackson inequality~\eqref{JacksonSO-pr} and the Nikolskii-Stechkin inequality~\eqref{ineqNS3cor}, we obtain
\begin{equation*}
  \begin{split}
     \w_{\a}(f,\s^{-1})_p &\lesssim \mathcal{R}_\a(f,\s^{-1})_p\lesssim \Vert f-P_\s\Vert_p+\s^{-\a}\sup_{\zeta\in \R^d,\,|\zeta|=1}\Vert D_{\zeta}^{\a} P_\s\Vert_{p}\\
     &\lesssim \w_{\a}(f,\s^{-1})_p+\w_{\a}(P_\s,\s^{-1})_p\lesssim \w_{\a}(f,\s^{-1})_p,
   \end{split}
\end{equation*}
that is, \eqref{eq.th6.0d-pr} and~\eqref{eqvwithbest-pr} follow.


\medskip

Similarly, taking into account Corollary~\ref{eq++}, we can prove~\eqref{RealW}.\hfill$\square$

\bigskip
{\bf{Acknowledgements.}}
The first author was partially supported by DFG project KO 5804/1-1.
The second author was partially supported by MTM 2017-87409-P, 2017 SGR 358, and by the CERCA Programme of the Generalitat de Catalunya.
Part of the work was done during the visit of the authors to the Isaac Newton Institute for Mathematical Sciences, EPSCR Grant no EP/K032208/1.
We would like to thank O.~Dom\'inguez and D.~Gorbachev for bringing our attention to the Brezis-Wainger embedding with respect to Hardy--Littlewood--Nikolskii's inequalities. More results in this direction can be seen in \cite{oscar}.

\end{document}